\newtheorem{corollary}{Corollary}
\newtheorem{lemma}{Lemma}
\newtheorem{definition}{Definition}
\newlist{steps}{enumerate}{1}
\setlist[steps, 1]{label = Step \arabic*:}
\newtheorem{modl}{Model}
\newtheorem{prop}{Proposition}
\newcommand{\Fig}{\textcolor{Cerulean}{Figure}\space}
\newcommand{\T}{\textcolor{Cerulean}{Table}\space}
\newcommand{\eq}{\textcolor{Cerulean}{Eq.}}
\newcommand{\ineq}{\textcolor{Cerulean}{Eq.}}
\newcommand{\Lemma}{\textcolor{Cerulean}{Lemma}\space}
\newcommand{\Alg}{\textcolor{Cerulean}{Algorithm}\space}
\newcommand{\s}{\textcolor{Cerulean}{Section}\space}
\begin{document}

\begin{frontmatter}
\title{\LARGE \textbf{Incentive-compatible mechanisms for online resource allocation in mobility-as-a-service systems}}

\author[1,2]{Haoning Xi}
\author[1,3]{Wei Liu}
\author[1]{David Rey\corref{cor1}}
\ead{d.rey@unsw.edu.au}
\author[1]{S.Travis Waller}
\author[2]{Philip Kilby}
\cortext[cor1]{Corresponding author at: Research Centre for Integrated Transport Innovation, School of Civil and Environmental Engineering, University of New South Wales, Sydney, NSW
2052, Australia.}
\address[1]{Research Centre for Integrated Transport Innovation, School of Civil and Environmental Engineering, University of New South Wales, Sydney, NSW
2052, Australia}
\address[2]{Data61, CSIRO, Canberra ACT 2601, Australia}
\address[3]{School of Computer Science and Engineering, University of New South Wales, Sydney, NSW 2052, Australia}

\begin{abstract}
In the context of `Everything-as-a-Service', the transportation sector has been evolving towards user-centric business models in which customized services and mode-agnostic mobility resources are priced in a unified framework. Yet, in the vast majority of studies on Mobility as a Service (MaaS) systems, mobility resource pricing is based on segmented travel modes, e.g. private vehicle, public transit and shared mobility services. This study attempts to address this research gap by introducing innovative auction-based online MaaS mechanisms where users can bid for any amount of mode-agnostic mobility resources based on their willingness to pay and preferences. We take the perspective of a MaaS regulator which aims to maximize social welfare by allocating mobility resources to users. We propose two mechanisms which allow users to either pay for the immediate use of mobility service (pay-as-you-go), or to subscribe to mobility service packages (pay-as-a-package). We cast the proposed auction-based mechanisms as online resource allocation problems where users compete for MaaS resources and bid for travel time per trip. We propose (integer-) linear programming formulations to accommodate user bids based on available mobility resources in an online optimization approach. We show that the proposed MaaS mechanisms are incentive-compatible, develop customized online algorithms and derive performance bounds based on competitive analysis. Extensive numerical simulations are conducted on large scale instances generated from realistic mobility data, which highlight the benefits of the proposed MaaS mechanisms and the effectiveness of the proposed online optimization approaches.
\end{abstract}

\begin{keyword}
Auctions/bidding, Incentive-compatibility, Online resource allocation, Mobility-as-a-Service.
\end{keyword}

\end{frontmatter}

\section{Introduction}\label{model}

In recent years, the rapid evolution of the digital and sharing economy has brought significant changes in how various services are provided. The Everything-as-a-Service concept, such as Computing-as-a-Service or Platform-as-a-Service, is revolutionizing traditional models where people value experience over the possession of material commodity. In the transportation sector, the concept of Mobility as a Service (MaaS) is emerging and expected to shift mobility trends. MaaS has been defined as the provision of multimodal, demand-driven mobility services, offering customized travel options to users in real-time via a digital platform \citep{MuConsult}. \cite{hensher2017future} hypothesized that MaaS will shift public transportation contracts from the current output-based form (delivering kilometres on defined modes) to outcome-based models (delivering accessibility using any mode), thereby becoming mode-agnostic. Using multiple travel modes for a trip often requires users to make multiple payments to different transportation service providers (TSP). A central element of MaaS is to provide multimodal mobility services via a single payment \citep{hensher2020special}, thus facilitating user adoption and delivering fully integrated mobility ecosystems. In MaaS systems, two tariff options are typically available: Pay-as-You-Go (PAYG) and Pay-as-a-Package (PAAP). PAYG charges users for the immediate use of mobility services; whereas PAAP allows users to subscribe to mobility packages over a longer time period, such as a week or a month package \citep{ho2018potential}. Hence, PAYG aims to provide on-demand mobility solutions for users while PAAP is intended to reduce the marginal costs of mobility services by introducing longer time frames and offering more flexibility from a supply-side standpoint \citep{kamargianni2016comprehensive,matyas2019potential}. Compared with PAYG, PAAP has more potential to attract users to shared modes and public transit \citep{matyas2019potential}. However, as noted by \cite{ho2018potential}, in the PAAP model, users are charged a larger payment, which may negatively affect the practical attractiveness of this mechanism.

In this study, we explore the potential of MaaS solutions through the lens of auction-based mechanism design, which focuses on the identification of economic incentives to achieve targeted objectives \citep{haeringer2018market}. We take the perspective of a regulator, e.g. the local transportation authority, which aims to maximize social welfare via a MaaS platform. We consider a sequential decision-making framework where, at each time period, users bid for mobility resources and the regulator aims to accommodate users bids by strategically allocating mobility resources subject to resource availability constraints. We assume that when bidding users indicate their preferences including their origin, destination, travel delay budget and  inconvenience tolerance; as well as their willingness to pay (WTP) for mobility services. We model mobility resources as travel time per unit of distance and users have the possibility to place multiple bids for varying levels of mobility resources. This aims to provide a versatile MaaS system where users are able to bid for any amount of mobility resources in a mode-agnostic fashion. We cast these auction-based mechanisms as online resource allocation problems. To promote user adoption, we seek to develop MaaS mechanisms that are strategyproof, i.e. ensure that users truthfully report their WTP when bidding for mobility resources.

We next review the literature on MaaS systems (Section \ref{ecosystems}), auctions and incentive-compatibility in mobility services (Section \ref{auction}) and online resource allocation problems (Section \ref{resource}); before outlining the contributions of this study (Section \ref{contributions}).

\subsection{Overview of MaaS}
\label{ecosystems}

According to the MaaS Alliance \citep{MaaSalliance}, the concept behind MaaS is to ``\textit{put the users at the core of transport services, offering them tailored mobility solutions based on their preference and WTP}''. The goal of a MaaS platform is to provide a diverse menu of mobility options across multiple travel modes, including public transport, ride-, car- or bike-sharing, taxi; hereby referred to as a MaaS bundle. \cite{caiati2020bundling} and \cite{ho2018potential} investigated the problem of how to design MaaS bundles based on a stated preference survey. \cite{reck2020maas} developed concepts of the behavioral design for stated choice experiments for MaaS systems. \cite{ho2021drivers} evaluated the users' interest in various MaaS subscription bundles and identified key drivers of users' choices between PAAP and PAYG using Sydney trial data. \cite{djavadian2017agent} proposed an agent based stochastic day-to-day simulation for modelling MaaS in the two-sided flexible transportation system. \cite{ho2020public} explored the potential demand for MaaS under different business models such as monthly subscription and PAYG based on the stated choice experiments. Overall, the main features of MaaS systems discussed in the literature can be categorized into five categories as summarized in \T\ref{T1}.

\begin{table*}[tb]
\footnotesize
\setlength{\abovecaptionskip}{0pt}
\setlength{\belowcaptionskip}{0pt}
\centering
\caption{Main features of MaaS systems}
\begin{tabular}{lll}
\toprule
Features& Study
\\
\midrule
 Bundle different public and private transportation modes &\cite{caiati2020bundling,reck2020maas}\\
\hline customize a service based on a user's preference and WTP &\cite{ho2021drivers}\\
\hline Users can request or book a service from a digital third platform &\cite{matyas2019potential}\\
\hline Offer on-demand services and tariff& \cite{ djavadian2017agent} \\
\hline Provide two payment options: PAYG and PAAP &\cite{ho2021drivers,ho2020public}\\
\bottomrule
\end{tabular}
\label{T1}
\end{table*}

Although the concept of MaaS aims to be attractive, \cite{karlsson2020development} have shown through a pilot study conducted in Gothenburg that most users are not willing to change their travel habits and  purchase mobility resources via a dedicated MaaS platform. The authors concluded that the perception of users towards the ability of the MaaS platform to `match' their mobility requirements (in terms of service cost, transportation modes, etc.) is a decisive factor in users adoption of MaaS. This highlights that further research is needed to develop and promote practical MaaS solutions.

\subsection{Auctions and incentive-compatibility in mobility services}
\label{auction}

To provide customized mobility services, service providers must elicit private information from users such as their value of time or WTP. Auction theory and mechanism design have thus received an increasing attention in transportation for the purposes of allocating mobility resources to users. In particular, several incentive-compatible mechanisms which can promote truthful user bidding behavior have been developed to elicit private economic and mobility-driven information from users due to the fact that mobility services often involve private information games. We next review recent efforts in different types of mobility services.

In the content of transportation service procurement, \cite{xu2014efficient} proposed efficient auction-based mechanisms for the distributed transportation procurement problem, which can induce truthful bidding from carriers. \cite{zhang2019optimal} proposed an incentive-compatible multi-attribute transportation procurement auction which can minimize the total cost incurred from delivery delay, procurement and low service quality. In the content of public transit regulation, \cite{sun2020regulating} studied the regulation of a monopolistic public transit operator whose marginal cost is unknown to the regulator by presenting an incentive-compatible  regulatory policy which captures interactions between the regulator and the transit operator under asymmetric information provision. In the content of parking reservation problem, \cite{zou2015mechanism} proposed a mechanism design based approach for parking slot assignment and proved the incentive-compatibility in both static and dynamic mechanisms. \cite{shao2020parking} considered an auction-based parking reservation problem where a parking management platform is the auctioneer and drivers are bidders, and then propose an effective multi-stage Vickrey-Clarke-Groves auction mechanism, which is efficient and incentive-compatible. In the content of traffic intersection management, \cite{sayin2018information} studied a new information-driven intersection control method to enhance the quality of transportation by using communication between vehicles and roadside units. The authors proposed a strategy-proof intersection control method via a incentive-compatible payments which maximize social welfare. \cite{rey2021online} presented online mechanisms for auctions in which users bid for priority service, which are shown to be incentive-compatible in the dynamic sense. 

In the proposed MaaS mechanisms, users can bid for mobility resources in a continuous fashion and have the possibility to submit multiple request with different service quanlity request and WTP based on the multi-bids auction setting. Moreover, Due to the time sensitivity of the mobility resources and users, the proposed mechanism should provide a time-varying pricing strategy, instead of the static pricing.  Thus none of the existing approaches to show the incentive-compatibility in the literature can be totally applied into our problem due to  the different auction design and pricing strategies.

\subsection{Online resource allocation problems}
\label{resource}

The implementation of MaaS systems is based on real-time data acquisition and processing. In particular, MaaS systems aims to provide users the possibility to purchase mobility resources on-the-fly to meet their travel needs. This requires that mobility resources be allocated in an dynamic fashion when user requests are submitted to the MaaS platform.

Online resource allocation problems have received considerable attention in the Operations Research, Management Science and Computer Science literature. Online resource allocation problems have been extensively studied in the context of computing. \cite{buyya2002economic} proposed and developed a distributed computational economy-based framework for resource allocation to regulate supply and demand which incentivizes users to trade-off deadline, budget, and quality of service. \cite{Zhang2013}, \cite{shi2015online} and \cite{Zhou2017} proposed a truthful online cloud auction framework  for cloud computing and developed online algorithms which guarantee certain competitive ratios. \cite{xiao2018shared} studied two truthful double-auction mechanisms for a shared parking problem. They consider a parking platform with flexible schedules that aims to promote the typical daily 'go out early and come back at dusk' pattern. \cite{cohen2019overcommitment} introduced a model that quantifies the value of overcommitment in cloud computing and developed competitive online algorithms for solving their problem.

Several online resource allocation problems can be cast as knapsack problems \citep{marchetti1995stochastic}. \cite{zhou2008budget} and \cite{chakrabarty2008online} designed online algorithms for the knapsack problem which can achieve provably optimal competitive ratios. \cite{buchbinder2007online} designed online algorithms for fractional versions of the online knapsack problem with a guaranteed competitive ratio. \cite{wang2018multi} proposed an online algorithm that can minimize the total cost due to waiting, cancellations and overtime capacity usage and proved that the algorithm can reach the best possible competitive ratio  for this class of problems. Recently, \cite{asadpour2020online} studied a class of online resource-allocation problems over the different demand classes with limited flexibility and showed the effectiveness in mitigating supply-demand mismatch under a myopic online allocation policy. \cite{stein2020advance} studied a an online resource allocation problem with heterogeneous customers having specific preferences for each resource, and introduce online algorithms with bounded competitive ratios. 

In this study, we build on the existing literature for online resource allocation problems and develop customized online algorithms for allocating mobility resources in the proposed MaaS system. We next outline the contributions of this study relative to the literature.

\subsection{Our contributions}
\label{contributions}

In this study, we propose auction-based mechanisms for online resource allocation in MaaS systems. The proposed approach is based on the premise that mode-agnostic mobility resources can be regarded as continuous quantities. For instance, distance, travel time, and price can be regarded as continuous features of a mobility service. The main contributions of this paper are summarized as follows:
\vspace{-5pt}
\begin{itemize}[leftmargin=*]
\item We propose an innovative paradigm for MaaS systems in which users compete for mobility resources and bid for travel time per unit of distance in a mode-agnostic fashion. This provides a flexible MaaS framework wherein users have the possibility to purchase any amount of mobility resources to fulfill their travel demand.
\vspace{-5pt}
\item We consider two auction-based MaaS mechanisms, PAYG and PAAP, which capture different time resolutions and cast these mechanisms as online resource allocation problems. We develop mathematical programming formulations to optimize the allocation of mobility resources to users and show that both PAYG and PAAP mechanisms are incentive-compatible. To the best of our knowledge, this study is the first to propose auction-based mechanisms for MaaS systems which are incentive-compatible.
\vspace{-5pt}
\item
We develop customized primal-dual algorithms to solve the proposed online mobility resource allocation problems and derive bounded competitive ratios relative to an optimal offline problem. We also design rolling horizon algorithms that balance solution quality and computational tractability in the proposed auction-based MaaS mechanisms.
\vspace{-5pt}
\item We conduct numerical experiments to test the proposed mechanisms and illustrate their performance on a series of realistic scenarios. The results show that the proposed online algorithms are able to solve large-scale instances involving thousands of users in competitive time. This highlights the potential of the proposed approach to support the deployment of MaaS systems. 
\vspace{-5pt}
\end{itemize}

The rest of this paper is organized as follows: \s \ref{S2} introduces the auction-based mechanisms for MaaS systems; \s \ref{S4}  gives online mobility resources allocation formulations in MaaS system, tailors primal-dual online algorithms, derives the competitive ratio of the online algorithms and proves the incentive compatibility of the proposed online mechanisms. \s \ref{S6} designs a rolling horizon configurations; \s \ref{S7} conducts numerical experiments for illustration; \s \ref{S8} concludes the paper, provides the remarks and discusses future research directions.

\section{Auction-based mechanisms for MaaS systems}
\label{S2}

We propose two mechanisms which allow users to either pay for the immediate use of mobility service referred to as pay-as-you-go (PAYG), or to subscribe to mobility service packages referred to as pay-as-a-package (PAAP). We next give the motivation for designing such auction-based mechanisms for allocating mobility resources in MaaS systems.  

\subsection{Motivation}
\label{S2.1}
The global economic transition from ``commodity'' to ``service'' to ``experience'' has changed the way services are delivered to users. In this context, the transport sector is experiencing a vast revolution brought by MaaS. This motivates the design and the operation of MaaS systems where users can submit mode-agnostic mobility requests with their preferences in terms of user-experience relevant factors (e.g. number of shared riders and extra in-vehicle travel time).

User mobility requests may be expressed in terms of travel distance and travel time. Consider a user seeking to use MaaS for her daily commute trip. The user may request different travel times and have different WTP for each trip request, along with a travel delay budget representing the maximum delay acceptable for this mobility service. In addition, to quantify the ``user experience'' of a multimodal trip, each travel mode can be assigned an inconvenience cost per unit of time, and users set preferences on their maximum acceptable inconvenience cost. Intuitively, the inconvenience cost of travel mode aims to capture discomfort in shared and public transportation modes \citep{bian2019mechanism}. Moreover, each user's maximum acceptable inconvenience cost is defined as inconvenience tolerance .

\begin{figure*}[b!]
\centering
\includegraphics[width=1\textwidth]{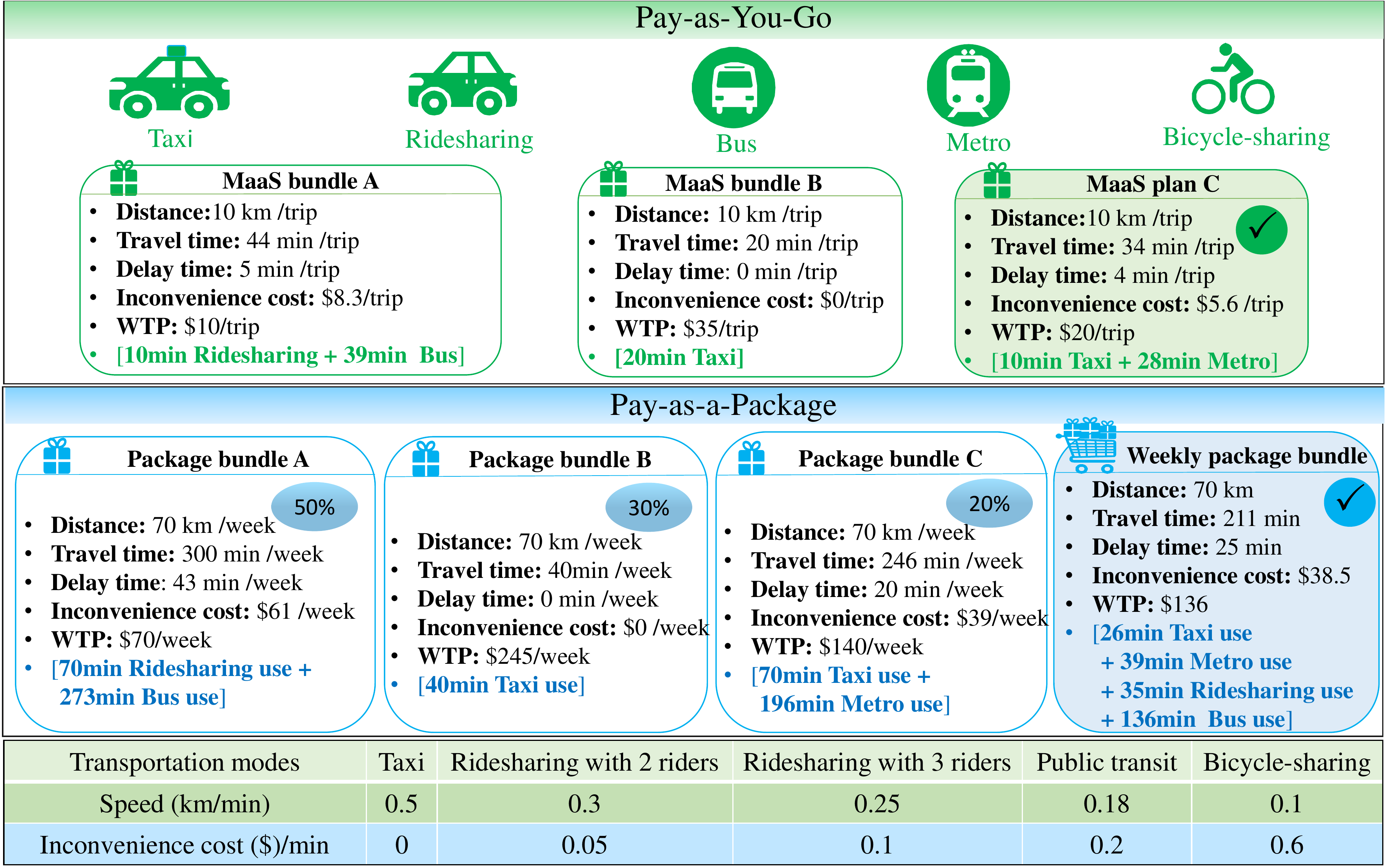}
\caption{Examples of the MaaS bundle in Pay-as-You-Go and Pay-as-a-Package mechanisms}\label{fig2}
\end{figure*}

The role of MaaS platforms is provide customized MaaS bundles to users according to their trip requests, preferences and WTP. We illustrate the proposed PAYG and PAAP mechanisms in \Fig \ref{fig2}. In the PAYG mechanism, a user whose inconvenience tolerance is \$10/trip and travel delay budget is 5 min/trip requests  a 10 km-service to the MaaS regulator. This user submits three bids for this service, each of which with a specific requested travel time and WTP: A(44 min, $\$10)$, B(20 min, $\$50)$ and C(34 min, $\$20)$. The PAYG part of \Fig \ref{fig2} illustrates how MaaS bundles which meet user preferences can be designed for each of these three bids by combining allocating mobility resources across multiple travel modes. In the PAYG mechanism, at most a single bid is accepted for each user, e.g. bid C. In the PAAP mechanism, a user whose inconvenience tolerance is \$70/week and travel delay budget is 45 min/week requests a 70 km-service. This user submits three bids, each of which with a specific requested travel time and WTP: A(300 min, $\$70$), B(40 min, $\$245$) and C(246 min, $\$140$). Unlike in the PAYG mechanism, the MaaS regulator can allocate mobility resources to multiple bids to design a mobility package, e.g. $50\%$ of A, $30\%$ of B and $20\%$ of C which corresponds to a travel time of 211 min and price of $\$136$.

\subsection{Problem statement}

This study takes the perspective of a MaaS regulator under government contracting\footnote{The government-contracted MaaS model has already been discussed in many existing studies, e.g., \citet{wong2020mobility}. One may refer to the literature for more detailed discussion regarding the  the role of government in an emerging MaaS future.} who integrates mobility resources from various TSPs, such as taxi companies, ridesharing companies, metro operators, bus companies and bicycle-sharing companies. Further, we assume that the MaaS regulator and the TSPs are under reselling contracts, in which TSPs are paid by the MaaS regulator to satisfy their reservation utility regardless of whether the provided mobility resources are utilized.\footnote{The reselling model has already been discussed by many in the literature for shared mobility, e.g., \citet{Zhang2020parking}. One may refer to the literature for more detailed discussion regarding the reselling model.} The interaction among the stakeholders of the proposed MaaS system is illustrated in \Fig\ref{1}. We assume that TSPs provide on-demand mobility services without fixed schedules and stops. Therefore, travel is assumed to be flexible with respect to time and space. In the proposed MaaS system, the average speed of different transport modes is assumed known and representative of the ``commercial speed'' of each mode. Specifically, we assume that service delays, e.g. waiting and transfer time, are already incorporated into the average speed of each mode; thus the average speed is the commercial speed rather than the driving speed on the road. 

To compare mobility services across travel modes, we introduce the concept of \textit{mobility resource} defined as speed-weighted travel distance. Using this concept, users' requests in terms of mobility services can be converted into mobility resources which can be themselves matched to mode-specific travel speed. A MaaS bundle is an allocation of mobility resources to a user. Since TSPs are limited in mobility service capacity, we assume that mobility capacity of the network is known and can be expressed in terms of mobility resources. We assume that users have preferences towards mobility services such as limits on mode-specific travel time, travel delay budget and WTP. To capture the preferences of users towards mobility services, we propose an auction model where users have the possibility to bid for one or multiple MaaS bundles. 

The problem addressed in this study is to determine the optimal allocation of mobility resources to users while accounting for users' preferences and resource capacity constraints. We consider that users arrive dynamically in the MaaS system and thus we adopt an online optimization approach to allocate mobility resources. We assume that the aim of the MaaS regulator is to maximize social welfare. Since users' preferences include private information such as users' WTP, the goal is to design incentive-compatible, auction-based mechanisms for MaaS systems.

\begin{figure*}[tb]
\centering
\includegraphics[width=0.8\textwidth]{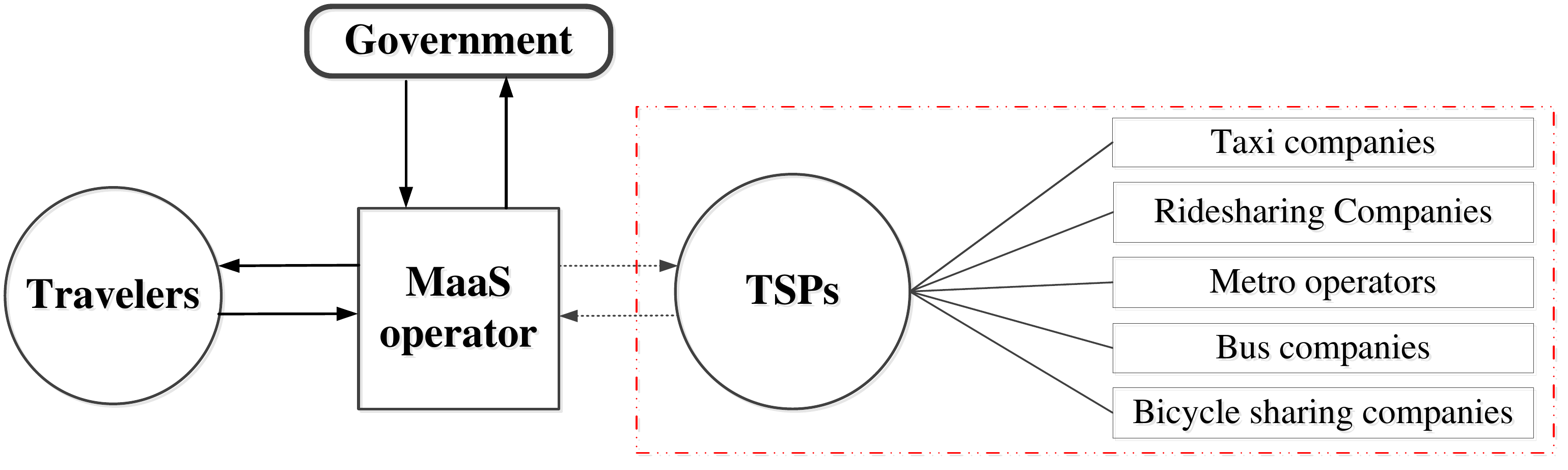}
\caption{Illustration of a MaaS system with a regulator under government contracting}\label{1}
\end{figure*}

We next present the auction model in \s \ref{oas} and resource pricing mechanisms in \s \ref{S3}.

\subsection{Auction model}
\label{oas}

In the proposed MaaS system, the auctioneer is the MaaS regulator and the bidders are users which seek to use mobility services to fulfill some or all of their travel needs. We propose a sealed-bid online auction, where a bidder privately submits one or multiple bids to the auctioneer. We consider a discrete time process and denote $\Omega$ the set of time slots in the auction. At each time slot, users bid for mobility services and the MaaS regulator determines which bids are served by allocating mobility resources subject to resource availability constraints.

We denote $\mathcal{I}(t)$ the set of users bidding at time slot $t\in\Omega$. For each user $i\in\mathcal{I}(t)$, we denote $\mathcal{J}_i$ the set of user $i$'s bids with $|\mathcal{J}_i| \geq 1$ ($\mathcal{J}_i$ is a singleton if user $i$ submits a single bid). We assume that users bid for travel times. For each bid $j \in \mathcal{J}_i$ of user $i \in \mathcal{I}(t)$, we denote $T_{ij}$ the requested travel time corresponding to this bid, and we denote $b_{ij}$ the WTP of user $i$ for service $j$. For each user bid $j \in \mathcal{J}_i$, $i \in \mathcal{I}(t)$, the corresponding mobility resources are defined as $Q_{ij} \triangleq D_i^2/T_{ij}$ which is expressed in speed-weighted travel distance units. The total capacity of the MaaS system is denoted $C$ and at each time slot $t \in \Omega$ the available capacity $A_t \leq C$ is determined based on the allocated mobility resources at previous time slots. 

The interpretation of the proposed auction-based model differs between the PAYG and the PAAP mechanisms. In the PAYG mechanism, users are assumed to bid for the immediate use of mobility services to complete a specific trip. We denote $D_i$ the travel distance corresponding to user $i$'s trip request and we denote $O_i$ the requested departure time of user $i$. We assume that $D_i$ is fixed and representative of the shortest path distance for this trip and that time $O_i$ occurs in the near future, e.g. within hour or day. Hence, in the PAYG mechanism, at most one bid of each user is accepted and allocated mobility resources, while other bids are rejected. In the PAAP mechanism, users are assumed to bid for mobility services over a pre-defined time period of length $L_i$, e.g. a week. In this mechanism, $D_i$ represents the requested travel distance of user $i$ over the time period $[O_i, O_i+L_i]$. Unlike in the PAYG mechanism, in the PAAP mechanism, users' bid may be partially served and we assume that the corresponding mobility resources are uniformly allocated over the time period $[O_i, O_i+L_i]$.


\subsection{Resource pricing and user payment}
\label{S3}

To price mobility services, we assume that at each time slot $t\in\Omega$ the MaaS regulator determines its unit price, denoted $p_t$, as a function of users bids. Let $b_{ij}/Q_{ij}$ be the unit bidding price for bid $j \in \mathcal{J}_i$ of user $i \in \mathcal{I}(t)$. At time slot $t$, the unit price $p_{t}$ of the MaaS system is set based on a pricing function such that $p_{t}\in \left[b_{\min},b_{\min}+b_{\max}\right]$, with $b_{\min}=\min _{i \in I(t), j \in J_{i}}\left\{\frac{b_{ij}}{Q_{ij}}\right\}$ and $b_{\max}=\max _{i \in I(t), j \in J_{i}}\left\{\frac{b_{ij}}{Q_{ij}}\right\}$.

We consider three types of unit price functions: linear, quadratic and exponential. Let $z_{t}=C-A_{t}$ be the allocated mobility resources at time slot $t$. The unit price at time slot $t$ $p_{t}$, can be determined based on the allocation of mobility resources at the previous time slot, using one of the following three functions.

\begin{itemize}[leftmargin=*]
\vspace{-5pt}
\item The linear unit price function is given in \eq\eqref{eq4}:
\begin{equation}
p_{t}^{\mathsf{lin}} (z_{t-1}) = \frac{b_{\max}}{C}z_{t-1}+b_{\min}.
\label{eq4} 
\end{equation}
\item The quadratic unit price function is given in \eq\eqref{eq6}:
\begin{equation}
p_{t}^{\mathsf{quad}} (z_{t-1}) =\frac{1}{C^{2}} (z_{t-1})^{2}+\frac{b_{\max}}{C} z_{t-1}+b_{\min}. \label{eq6} 
\end{equation}

\item The exponential unit price function is given in \eq\eqref{eq8}:
\begin{equation}
p_{t}^{\mathsf{exp}} (z_{t-1})=\frac{b_{\max}}{\alpha_{t}-1}\cdot\left(\alpha_{t}^{\frac{z_{t-1}}{C}}-1\right )+b_{\min},\label{eq8}
\end{equation}

where $\alpha_{t}$ is mechanism-dependent. For PAYG, $\alpha_{t}=(1+\overline{R}_{t})^{\frac{1}{\overline{R}_{t}}}$, with $\overline{R}_{t}=\max_{i\in \mathcal{I}(t)}\left\{\frac{\overline{Q}_{i}}{A_{t}}\right\}$ and  $\overline{Q}_{i}=\max_{j\in\mathcal{J}_{i}} \left\{Q_{ij}\right\}, \forall i\in \mathcal{I}(t)$. For PAAP, $\alpha_{t}=(1+\underline{R}_{t})^{\frac{1}{\underline{R}_{t}}}$, with $\underline{R}_{t}=\min_{i\in \mathcal{I}(t)}\left\{\frac{\underline{Q}_{i}}{A_{t}}\right\}$ and  $\underline{Q}_{i}=\min_{j\in\mathcal{J}_{i}} \left\{Q_{ij}\right\}, \forall i\in \mathcal{I}(t)$.
\end{itemize}

Using one of the pricing functions $p_{t+1} (z_t)$ from \eq\eqref{eq4}-\eq\eqref{eq8}, the payment of user $i$ for bid $j$ is:
\begin{equation}
p_{ij} = Q_{ij} p_{t} (z_{t-1}),\quad \forall i \in \mathcal{I}(t), j\in\mathcal{J}_{i}. 
\label{eq3} 
\end{equation}

\subsection{Auction process}
\label{process}

At each time slot $t\in\Omega$, the proposed auction consists of the following steps: \\

\noindent$\triangleright$ \textbf{Step 1:} \textbf{Auction set up.} Identify the available mobility resources $A_t$ and the set of users $\mathcal{I}(t)$ and their bids, and the unit price $p_t$. Select a unit pricing function (e.g., \eq\eqref{eq4},\eqref{eq6}, \eqref{eq8}) and determine user payments for each user bid based on \eq\eqref{eq3}. 

\noindent$\triangleright$ \textbf{Step 2:} \textbf{Mobility resource allocation}. Solve a resource allocation problem to identify the optimal allocation of mobility resources to users subject to resource availability constraints. This step is discussed in detail in Section \ref{S4}.

\noindent$\triangleright$ \textbf{Step 3:} \textbf{User payment and resource availability update}. For each user bid served by the MaaS system, charge the corresponding user payment based on \eq\eqref{eq3}. Update mobility resource availability and the unit price for the next time slot.

A detailed numerical example is provided in \textcolor{Cerulean}{Supplementary material A} to illustrate the proposed auction-based mechanisms. We next present optimization methods to solve the online mobility resource allocation problems which arise at Step 2 of the auction process.

\section{Online mobility resource allocation}
\label{S4}

In this section, we presents formulations and algorithms for online mobility resource allocation in MaaS systems. We first introduce mathematical programming formulations for the online resource allocation problems considered in \s \ref{PAYGra}. We then develop primal-dual online algorithms in \s \ref{PAYGpd} and conduct a competitive analysis of these online algorithms in \s \ref{PAYGcr}.

\subsection{Mathematical programming formulations}
\label{PAYGra}

We first propose a mathematical programming formulation for the online mobility resource allocation problem corresponding to the PAYG mechanism; We then explain how this formulation can be adapted for the PAAP mechanism.

Recall that $\mathcal{I}(t)$ is the set of users participating in the auction at time slot $t$, and that $\mathcal{J}_{i}$ is the set of bids of user $i$. For each user $i \in \mathcal{I}(t)$, and bid $j \in \mathcal{J}_{i}$, let $x_{ij}$ be a binary variable denoting whether bid $j$ is allocated mobility resources (1) or not (0). Recall that $Q_{ij}$ represents the mobility resources requested for bid $j$ of user $i$ and that $A_t$ is the available mobility resources at time slot $t$. The resource availability constraint ensures that the total amount of allocated mobility resources does not exceed the system capacity at time $t$:
\begin{equation}
\sum_{i \in \mathcal{I}(t)} \sum_{j \in \mathcal{J}_{i}} Q_{ij} x_{ij} \leq A_{t}.
\end{equation}

In the PAYG mechanism, for each user $i \in \mathcal{I}(t)$, at most one bid may be accepted, thus $\sum_{j \in \mathcal{J}_i} x_{ij} \leq 1$. Further, user bids can only be accepted if users' WTP is greater or equal to the market price determined by the pricing function corresponding to the requested mobility resources. Specifically, we require:
\begin{equation}
x_{ij}(b_{ij} - p_{ij}) \geq 0, \quad \forall i \in \mathcal{I}(t), j \in\mathcal{J}_i.
\end{equation}

Thus, if $b_{ij} < p_{ij}$ then $x_{ij} = 0$, otherwise $x_{ij} \geq 0$.

Let $\mathcal{M}$ be the set of travel modes available in the MaaS system. To map allocation decisions to mobility resources, we introduce a real positive variable $l_{ij}^m$ which represents the travel time allocated to mode $m \in \mathcal{M}$ if bid $j$ of user $i$ is accepted. Let $v_m$ be the travel speed of mode $m \in \mathcal{M}$. Allocation decisions $x_{ij}$ are linked to variable $l_{ij}^m$ via the constraint:
\begin{equation}
\sum_{m \in \mathcal{M}} v_{m} l_{ij}^{m} = D_{i} x_{ij}, \quad \forall i \in \mathcal{I}(t), j \in\mathcal{J}_i.
\end{equation}

If $x_{ij}=1$ for some bid $j \in\mathcal{J}_i$, then the requested travel distance $D_i$ must be distributed across speed-weighted, mode-based travel times; otherwise, the vector of variables $\bm{l}_i = [l_{ij}^m]_{j \in \mathcal{J}_i, m\in \mathcal{M}}$ is null. We consider that users have a travel delay budget, denoted $\Phi_i$, which represents the maximum excess travel time relative to the requested travel times $T_{ij}$, for each bid $j \in \mathcal{J}_i$ of user $i \in \mathcal{I}(t)$. Combining this upper bound on travel delay along with the requirement that the allocated mobility resources be at least the requested travel time yields the constraints:
\begin{equation}
0\leq \sum_{m \in \mathcal{M}} l_{ij}^{m}-T_{ij}x_{ij}\leq \Phi_{i}, \quad \forall i \in \mathcal{I}(t), j \in\mathcal{J}_i.
\end{equation}

Further, we assume that users perceive travel time using different travel modes differently, which is a common assumption in the literature \citep{janjevic2020designing}. We use $\sigma_m$ to denote the inconvenience cost of mode $m \in \mathcal{M}$ per unit of time and use $\Gamma_i$ to denote the inconvenience tolerance of user $i \in \mathcal{I}(t)$. We require:
\begin{equation}
\sum_{m \in  \mathcal{M}} \sigma_{m} l_{ij}^{m} \leq \Gamma_{i}, \quad \forall i \in \mathcal{I}(t), j \in\mathcal{J}_i.
\end{equation}

We assume that the goal of the MaaS regulator is to maximize social welfare defined as the sum of consumer surplus and the revenue of TSPs, that is:
\begin{equation}
\max\underbrace{\sum_{i \in \mathcal{I}(t)} \sum_{j \in \mathcal{J}_{i}} b_{ij} x_{ij}}_{\text{social welfare }}  =\underbrace{\sum_{i \in \mathcal{I}(t)} \sum_{j \in \mathcal{J}_{i}}(b_{ij}-p_{ij})x_{ij}}_{\text{consumer surplus}}  
+ \underbrace{\sum_{i \in \mathcal{I}(t)} \sum_{j \in \mathcal{J}_{i}}p_{ij}x_{ij}}_{\text{TSPs' total revenue}}
\end{equation}

The resulting mixed-integer linear programming (MILP) formulation for the PAYG online mobility resource allocation problem is summarized in \textcolor{Cerulean}{Model 1.1}. \\ 

\noindent \textbf{Model 1.1} (PAYG online mobility resource allocation).
\begin{small}
\label{mod1}
\begin{subequations}
\label{mm1}
\allowdisplaybreaks
\begin{align}
&\max \sum_{i \in \mathcal{I}(t)} \sum_{j \in \mathcal{J}_{i}} b_{ij} x_{ij},\label{1a}\\
&\text{subject to:}  && \nonumber \\
&\sum_{m \in \mathcal{M}} v_{m} l_{ij}^{m} =  D_{i} x_{ij},&& \forall i \in \mathcal{I}(t), j \in \mathcal{J}_i,\label{1b}\\
&0\leq \sum_{m \in \mathcal{M}} l_{ij}^{m}-T_{ij}x_{ij}\leq \Phi_{i},&& \forall i \in \mathcal{I}(t),j \in \mathcal{J}_{i},\label{1c}\\
&\sum_{m \in  \mathcal{M}} \sigma_{m} l_{ij}^{m} \leq \Gamma_{i},&& \forall i \in \mathcal{I}(t), j \in \mathcal{J}_{i},\label{1d}\\
&x_{ij}\left(b_{ij}-p_{ij}\right) \geq 0, && \forall i \in \mathcal{I}(t), j \in \mathcal{J}_{i},\label{1e}\\
&\sum_{i \in \mathcal{I}(t)} \sum_{j \in \mathcal{J}_{i}} Q_{ij} x_{i j} \leq A_{t},\label{1f}\\
&\sum_{j \in \mathcal{J}_{i}} x_{i j} \leq 1, &&\forall i \in \mathcal{I}(t),\label{1g}\\
&l_{ij}^{m} \geq 0 && \forall i \in \mathcal{I}(t), j \in \mathcal{J}_{i}, m \in \mathcal{M},\label{1h}\\
&x_{ij}\in\{0,1\},&& \forall i \in \mathcal{I}(t), j \in \mathcal{J}_{i}.\label{1i}
\end{align}
\end{subequations}
\end{small}

 
In the PAAP mechanism, users have the possibility to purchase mobility packages over a pre-defined period of time. Hence, in the PAAP mechanism time slots are intended to represent longer time periods compared to those used in the PAYG mechanism, e.g. one day vs one minute. In this context, the decisions to allocate mobility resources to users' requests, $[x_{ij}]_{i \in \mathcal{I}(t), j \in \mathcal{J}_i}$, can be modeled as continuous variables as opposed to binary variables. Therefore the online mobility resource allocation problem in the PAAP mechanism is formulated as the linear programming (LP) relaxation of \textcolor{Cerulean}{Model 1.1}. This formulation is given in \textcolor{Cerulean}{Model 1.2}.\\

\noindent \textbf{Model 1.2} (PAAP online mobility resource allocation).
\begin{subequations}
\allowdisplaybreaks
\begin{align}
&\max\sum_{i \in \mathcal{I}(t)} \sum_{j \in \mathcal{J}_{i}} b_{ij} x_{ij},\label{39a}\\
&\text{subject to:}  && \nonumber \\
&\text{\eqref{1b}-\eqref{1h}},\nonumber\\
& 0 \leq x_{ij}\leq 1,&& \forall i \in \mathcal{I}(t), j \in \mathcal{J}_{i}.\label{39i}
\end{align}
\end{subequations}\label{mod3}
\vspace{-15pt}

Note that the time resolution of the PAAP mechanism is intended to be significantly larger than that of the PAYG mechanism. Hence, although \textcolor{Cerulean}{Models 1.1} and \textcolor{Cerulean}{Models 1.2} have near identical formulations, from a practical standpoint they are designed to provide solutions which corresponds to different time frames.\\

Solving \textcolor{Cerulean}{Models 1.1} or \textcolor{Cerulean}{Models 1.2} corresponds to step 2 of the auction process outlined in Section \ref{process}. After solving the online resource allocation problem at time slot $t$, the available mobility resources at subsequent time slots $t' \geq t+1$ are updated (step 3). Specifically, for each user $i\in\mathcal{I}(t)$, the number of time slots affected by user $i$'s allocation is $N_i = \lceil\sum_{j \in \mathcal{J}_i}\sum_{m \in \mathcal{M}} l_{ij}^m x_{ij}\rceil$ and for each $t' \in [O_i, O_i+N_i]$, the available mobility resources $A_{t'}$ is decreased by $\sum_{j \in \mathcal{J}_i} Q_{ij} x_{ij}$.\\

To study the proposed online mobility resource allocation formulations, we start by showing that they can be reformulated as multidimensional knapsack problems (MKP). For this, we introduce the concept of user-based feasible MaaS bundles.

\begin{definition}
For any user $i \in \mathcal{I}(t)$ and user bid $j \in \mathcal{J}_{i}$, let $\bm{l}_{ij} = [l_{ij}^{m}]_{m \in \mathcal{M}}$. Let $\mathcal{S}_{ij}$ be the set defined as:
\begin{equation}
\mathcal{S}_{ij} \triangleq \left\{\bm{l}_{ij} \in \mathbb{R}^{|\mathcal{M}|} : \eqref{1b}-\eqref{1d}, b_{ij} \geq p_{ij}, x_{ij} = 1\right\}. 
\end{equation}
We say that $\mathcal{S}_{ij}$ is the set of feasible MaaS bundles corresponding to bid $j$ for user $i$. Further, for each user $i \in \mathcal{I}(t)$, we define $\mathcal{S}_{i} \triangleq \bigcup_{j \in \mathcal{J}_{i}} \mathcal{S}_{ij}$ as the set of feasible MaaS bundles for user $i$. 
\end{definition}


Observe that for any MaaS bundle $s \in \mathcal{S}_{ij}$, the corresponding mobility resources are $Q_{i,s}=Q_{ij}$ and the corresponding user bid is $b_{i,s}=b_{ij}$. 

\begin{lemma}\label{L1}
For each user $i \in \mathcal{I}(t)$ and for each MaaS bundle $s \in \mathcal{S}_{i}$, let $\chi_{i,s}$ be a binary variable representing the allocation of $s$ to $i$. Consider the compact integer program (IP): 

\begin{subequations}
\allowdisplaybreaks
\begin{align}
&\max \sum_{i \in \mathcal{I}(t)}\sum_{s \in \mathcal{S}_{i}} b_{i ,s}\chi_{i,s}, \label{2a}\\
&\text{\emph{subject to:}}  && \nonumber \\
&\sum_{i \in \mathcal{I}(t)}\sum_{s \in \mathcal{S}_{i}}  Q_{i,s}\chi_{i,s} \leq A_{t},\label{2b}\\
&\sum_{s \in \mathcal{S}_{i}} \chi_{i,s} \leq 1, &&\forall i \in \mathcal{I}(t),\label{2c}\\
& \chi_{i,s}\in \left\{0,1\right\}, && \forall i \in \mathcal{I}(t), s \in \mathcal{S}_{i}.\label{2d}
\end{align}\label{CP1}
\end{subequations}

\noindent The compact IP \eqref{CP1} is equivalent to \emph{\textcolor{Cerulean}{Model 1.1}}. Further, the LP-relaxation of \eqref{CP1} is equivalent to \emph{\textcolor{Cerulean}{Model 1.2}}.
\end{lemma}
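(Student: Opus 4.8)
The plan is to prove both equivalences by exhibiting an explicit, objective- and feasibility-preserving correspondence between the solutions of Model~1.1 and the compact program \eqref{CP1}, and then to check that the same correspondence relates their LP relaxations. The enabling observation, already recorded above, is that every bundle $s\in\mathcal{S}_{ij}$ inherits $b_{i,s}=b_{ij}$ and $Q_{i,s}=Q_{ij}$; hence the objective \eqref{2a} and the knapsack constraint \eqref{2b} depend on a bundle only through the bid it is drawn from, while membership $\bm{l}_{ij}\in\mathcal{S}_{ij}$ encapsulates exactly constraints \eqref{1b}--\eqref{1d} together with $b_{ij}\ge p_{ij}$ (equivalently \eqref{1e} at $x_{ij}=1$).

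First I would treat the integer case. Given a feasible $(x,\bm{l})$ of Model~1.1, constraint \eqref{1g} guarantees that each user $i$ has at most one accepted bid $j$; for that bid the vector $\bm{l}_{ij}$ satisfies \eqref{1b}--\eqref{1d} and, by \eqref{1e}, $b_{ij}\ge p_{ij}$, so $s:=\bm{l}_{ij}\in\mathcal{S}_{ij}\subseteq\mathcal{S}_i$, and I set $\chi_{i,s}=1$ (and all $\chi_{i,\cdot}=0$ for a user with no accepted bid). Conversely, a feasible $\chi$ of \eqref{CP1} selects at most one bundle per user by \eqref{2c}; for a selected $s\in\mathcal{S}_{ij}$ I set $x_{ij}=1$ and $\bm{l}_{ij}=s$, and zero otherwise. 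Under this bijection \eqref{1g} matches \eqref{2c}, \eqref{1f} matches \eqref{2b} because $Q_{i,s}=Q_{ij}$, the objective values coincide because $b_{i,s}=b_{ij}$, and the remaining constraints \eqref{1b}--\eqref{1e} hold precisely by the definition of $\mathcal{S}_{ij}$. This establishes Model~1.1 $\equiv$ \eqref{CP1}.

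Next I would pass to the LP relaxations, using $x_{ij}\in[0,1]$ as in \eqref{39i} and $\chi_{i,s}\in[0,1]$ in place of the integrality constraints. From a fractional $\chi$ I recover a feasible point of Model~1.2 by aggregation, setting $x_{ij}=\sum_{s\in\mathcal{S}_{ij}}\chi_{i,s}$ and $\bm{l}_{ij}=\sum_{s\in\mathcal{S}_{ij}}\chi_{i,s}\,s$: constraint \eqref{1b} follows from $\sum_m v_m s^m=D_i$ for each $s$, while \eqref{1c}--\eqref{1d} follow because the excess $\sum_m l_{ij}^m-T_{ij}x_{ij}=\sum_s\chi_{i,s}\bigl(\sum_m s^m-T_{ij}\bigr)$ and the weighted inconvenience are convex combinations bounded by $\Phi_i x_{ij}\le\Phi_i$ and $\Gamma_i x_{ij}\le\Gamma_i$ respectively; the couplings \eqref{1f}--\eqref{1g} reduce to \eqref{2b}--\eqref{2c}, and the objectives again agree. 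This already yields one inequality between the two optimal values.

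The hard part will be the reverse LP direction, namely representing an arbitrary fractional solution of Model~1.2 by bundles. The difficulty is that the delay budget \eqref{1c} and the inconvenience tolerance \eqref{1d} bound the \emph{full} allocation rather than the fraction actually served, so the naive normalization $\bm{l}_{ij}/x_{ij}$ need not lie in $\mathcal{S}_{ij}$ when $x_{ij}<1$ (its excess is only controlled by $\Phi_i/x_{ij}$). Since the objective \eqref{39a} depends on $(x_{ij})$ alone, it suffices to show that whenever a bid is served at level $x_{ij}>0$ the polytope $\mathcal{S}_{ij}$ is nonempty, so that mass $x_{ij}$ may be placed on any $s\in\mathcal{S}_{ij}$ while respecting \eqref{2b}--\eqref{2c} and reproducing the objective. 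I would therefore establish nonemptiness of $\mathcal{S}_{ij}$ for every acceptable bid --- equivalently, that $[T_{ij},T_{ij}+\Phi_i]$ meets the attainable travel-time range $[D_i/\max_m v_m,\,D_i/\min_m v_m]$ with the inconvenience bound $\Gamma_i$ met at $x_{ij}=1$ --- and then invoke the convexity of $\mathcal{S}_{ij}$ and the scaling argument of the forward direction to conclude that the two LPs share the same feasible acceptance levels, hence the same optimum. This individual-feasibility condition on served bids is the crux on which the LP equivalence rests.
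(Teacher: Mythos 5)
Your treatment of the integer case is correct and is essentially the paper's own argument, made more explicit: constraint \eqref{1g} pairs with \eqref{2c}, \eqref{1f} with \eqref{2b} via $Q_{i,s}=Q_{ij}$, and membership of $\bm{l}_{ij}$ in $\mathcal{S}_{ij}$ packages \eqref{1b}--\eqref{1e} at $x_{ij}=1$; the objectives coincide because $b_{i,s}=b_{ij}$. (One small point worth a sentence in a full write-up: a vector $\bm{l}$ could in principle lie in $\mathcal{S}_{ij}\cap\mathcal{S}_{ij'}$ for $j\neq j'$, in which case $b_{i,s}$ and $Q_{i,s}$ are ambiguous unless bundles are tagged by the bid they are drawn from; the aggregation $x_{ij}=\sum_{s\in\mathcal{S}_{ij}}\chi_{i,s}$ in your LP argument silently assumes such a tagging.)

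For the LP half you have correctly located the difficulty that the paper's proof skips over entirely (the paper simply asserts that equivalence of the IPs transfers to their relaxations, which is not a valid inference: Model~1.2 relaxes $x$ in the original $(x,\bm{l})$ space, while the relaxation of \eqref{CP1} lives in the $\chi$-space over \emph{pre-enumerated full bundles}, and these are different relaxations). However, the step on which your plan rests --- that $x_{ij}>0$ feasible in Model~1.2 forces $\mathcal{S}_{ij}\neq\emptyset$ --- is not provable from the model and is false in general. Because the right-hand sides of \eqref{1c} and \eqref{1d} are $\Phi_i$ and $\Gamma_i$ rather than $\Phi_i x_{ij}$ and $\Gamma_i x_{ij}$, a bid with $T_{ij}+\Phi_i < D_i/\max_{m}v_m$ has $\mathcal{S}_{ij}=\emptyset$ (no full allocation meets the delay budget), yet Model~1.2 accepts it at any level $x_{ij}\leq \Phi_i\big/\bigl(D_i/\max_m v_m - T_{ij}\bigr)$ and collects $b_{ij}x_{ij}>0$, a value the relaxation of \eqref{CP1} cannot reproduce since that bid contributes no bundle variables at all. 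So the two LPs can have different optimal values, and your concluding step would fail as stated. Closing the argument requires either an explicit standing assumption that every bid admits a feasible full bundle (individual feasibility at $x_{ij}=1$), or rewriting \eqref{1c}--\eqref{1d} with right-hand sides scaled by $x_{ij}$ so that feasible fractional allocations are exactly the scaled points of $\mathcal{S}_{ij}$; with either fix, your convexity-plus-rescaling argument does go through and is strictly more rigorous than the proof given in the paper.
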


\begin{proof} 
Observe that all MaaS bundles $s \in \mathcal{S}_{i}$ may be allocated to user $i$ since by definition $b_{ij} \geq p_{ij}$ for any $j \in \mathcal{J}_i$. Constraint \eqref{2c} imposes that at most one MaaS bundle is allocated to $i$, which is equivalent to the requirement that at most one user bid is accepted in {\textcolor{Cerulean}{Model 1.1}}. If $\chi_{i,s} = 0$ for all $s \in \mathcal{S}_{i}$, then no mobility resources are allocated to $i$ which is equivalent to $x_ij = 0$ for all $j \in \mathcal{J}_i$. Else, if $\chi_{i,s} = 1$, then there must exists a bid $j \in \mathcal{J}_i$ such that $s \in \mathcal{S}_{ij}$. Since, by definition, all MaaS bundles $s \in \mathcal{S}_{i}$ verify constraints $\eqref{1b}-\eqref{1d}$, any solution $[\chi_{i,s}]_{i\in \mathcal{I}(t), s \in \mathcal{S}_i}$ of \eqref{CP1} corresponds to a solution $[(x_{ij}, \bm{l}_{ij})]_{i\in \mathcal{I}(t), j \in \mathcal{J}_i}$ of {\textcolor{Cerulean}{Model 1.1}} and vice-versa. Further, since the compact IP \eqref{CP1} is equivalent to {\textcolor{Cerulean}{Model 1.1}}, the LP-relaxation of \eqref{CP1} is equivalent to the LP-relaxation of  {\textcolor{Cerulean}{Model 1.1}} which is {\textcolor{Cerulean}{Model 1.2}}.
\end{proof}

We next show that both of the proposed auction-based MaaS mechanisms are incentive-compatible (IC).

\begin{prop}
\label{theorem1}
The proposed PAYG and PAAP mechanisms are IC, in which bidding truthfully is a weakly dominant strategy.
\end{prop}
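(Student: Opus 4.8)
The plan is to establish incentive compatibility in the dominant-strategy sense: fixing an arbitrary user $i\in\mathcal{I}(t)$ together with an arbitrary profile of reports of all other users, I would show that user $i$'s realized consumer surplus (true valuation of the served bid minus the payment $p_{ij}$) is maximized by reporting the true requested travel times and the true willingness-to-pay for every bid $j\in\mathcal{J}_i$. By Lemma~\ref{L1} it suffices to reason on the compact bundle formulation \eqref{CP1} (and its LP relaxation for PAAP): the mechanism selects at most one feasible MaaS bundle per user, a convex combination of bundles in the PAAP case, so as to maximize the total reported value subject to the single capacity constraint \eqref{2b}. The report of user $i$ enters the mechanism in only three ways, which I would isolate and treat separately: (i) through the eligibility requirement $b_{ij}\ge p_{ij}$ encoded in \eqref{1e}; (ii) as the objective coefficient $b_{ij}$ in \eqref{2a}; and (iii) through the unit price $p_t$ via the market statistics $b_{\min}$ and $b_{\max}$.

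First I would show that the payment $p_{ij}=Q_{ij}\,p_t$ acts as a posted unit price that user $i$ cannot manipulate to their advantage. Since each pricing function \eqref{eq4}--\eqref{eq8} depends on the allocation $z_{t-1}$ realized at the previous slot, which is fixed before the current auction, and is nondecreasing in $b_{\max}$ and in $b_{\min}$, I would verify that any unilateral change of user $i$'s report that alters $b_{\max}$ or $b_{\min}$ moves $p_t$ weakly against the user: inflating a unit bid $b_{ij}/Q_{ij}$ can only raise $b_{\max}$, hence raise $p_t$ and the payment, while deflating the smallest unit bid to lower $b_{\min}$ either renders the user's own bid ineligible or still cannot reduce the payment on a bid that is actually served. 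This lets me treat $p_t$, and therefore each threshold $p_{ij}$, as a constant determined by the truthful report when comparing user $i$'s attainable outcomes.

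With the payment thresholds fixed, I would close the argument with a monotonicity-and-threshold exchange of Myerson type. The allocation rule is monotone in each reported $b_{ij}$, since raising a bid only increases its objective coefficient in \eqref{2a} and thus can only help it be selected, while the payment conditional on being served, $Q_{ij}p_t$, is independent of the magnitude of $b_{ij}$ beyond crossing the threshold $p_{ij}$. I would then argue by cases on a deviation from the truthful report $b_{ij}=v_{ij}$: over-reporting can only cause a bid with $v_{ij}<p_{ij}$ to be served, yielding negative surplus dominated by the outside option of value $0$, or it leaves the served bid unchanged; under-reporting can only cause the mechanism to drop a bid that would have produced non-negative surplus, or again leaves the outcome unchanged. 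A subtle point I would have to rule out explicitly is within-user bid steering, namely that a user cannot shade one eligible bid in order to force the selection of another of their own bids carrying higher surplus but lower reported value; this is where the threshold structure does the work, since each served bid is charged its own $Q_{ij}p_t$ and truthful reporting already makes every surplus-positive bid eligible while leaving every surplus-negative bid ineligible.

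Finally, for PAAP I would lift the same argument to the LP relaxation of \eqref{CP1}: because the objective is linear in $b_{ij}$ and the payment per unit of served resource is the fixed unit price $p_t$, the threshold comparison applies to the fractional allocation coordinate by coordinate, so truthful bidding remains weakly dominant when a bundle may be served fractionally. I would also dispatch misreporting of the requested travel time $T_{ij}$, which rescales $Q_{ij}=D_i^2/T_{ij}$ and hence both the payment and constraints \eqref{1b}--\eqref{1d}, by noting that the payment scales with $Q_{ij}$ at the common unit price while the delay and inconvenience constraints are imposed at the reported $T_{ij}$, so no misreported time yields a feasible, surplus-improving bundle unavailable under truthful reporting. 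The main obstacle is the second paragraph: rigorously bounding the effect of user $i$'s own report on the posted price through $b_{\min}$ and $b_{\max}$, since, unlike a price fixed purely by $z_{t-1}$, this coupling is precisely what could in principle break dominant-strategy truthfulness and must be shown to be weakly unfavorable to any deviator.
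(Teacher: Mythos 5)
Your PAYG argument coincides with the paper's: the paper also treats $p_{ij}=Q_{ij}\,p_t(z_{t-1})$ as a posted threshold that the user's report cannot move, and then enumerates the six orderings of $v_{ij}$, $p_{ij}$ and $\hat b_{ij}$ (\T\ref{T3}) to conclude $\hat u_{ij}\le u_{ij}$ --- exactly your monotonicity-and-threshold exchange, just organized as an explicit case table. Where you genuinely diverge is PAAP. The paper does not lift the threshold argument coordinate-wise to the fractional allocation; it writes down the dual \eqref{ddd} of the compact LP, uses complementary slackness to identify $u_i^*=\max\{0,\,b_{i,s}-Q_{i,s}q^*(t)\}$ as the user's utility, and then runs a simplex-tableau sensitivity analysis on a two-user instance, perturbing $b_{i,s}$ to $b_{i,s}+\Delta b$ and checking that the optimal basis, the dual price $q(t)$, and hence the net utility are unchanged. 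Your route is more elementary and is not restricted to a two-user example, but both routes rest on the same unstated premise: that the unit price charged on a served bid is not a function of that user's own report vector.

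That premise is where the genuine gap sits, and although you correctly flag it as the main obstacle, your proposed resolution does not close it. All three price functions \eqref{eq4}--\eqref{eq8} contain $b_{\min}$ additively and $b_{\max}$ multiplicatively, where $b_{\min}$ and $b_{\max}$ are extrema over the \emph{current} slot's bids, including user $i$'s. With $|\mathcal{J}_i|>1$, a user can submit one throwaway bid $j'$ with an artificially low unit price $b_{ij'}/Q_{ij'}$: that bid becomes the new $b_{\min}$, is rejected by \eqref{1e} at no cost to the user, and lowers $p_t$, hence lowers the payment $Q_{ij}p_t$ on a \emph{different} bid $j$ of the same user that is served. Your claim that deflating $b_{\min}$ ``still cannot reduce the payment on a bid that is actually served'' is precisely what fails here (it holds only for a single-bid user, whose deflated bid would itself become ineligible since $p_{ij}\ge Q_{ij}b_{\min}$). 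The paper sidesteps this by asserting that the payment ``is independent of user $i$'s bidding price,'' which is true of the magnitude of the served bid but not of the full report. Closing the gap requires either computing $b_{\min},b_{\max}$ over the bids of users other than $i$ when pricing user $i$ (an externality-style price), or an explicit argument restricting how a user's bids can enter the extrema; as written, neither your proof nor the paper's supplies this.
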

\begin{proof}

According to the payment rule of the mechanisms given in \eq(\ref{eq3}), user $i$'s payment for bid $j$ is independent of user $i$'s bidding price, and will be charged with the same payment, no matter how much the bidding price is. User $i$'s utility for bid $j$ is defined in \eq\eqref{ut}. 
\begin{equation}
u_{ij}=v_{ij}-p_{ij},\label{ut}
\end{equation}
where $v_{ij}$ denotes user $i$'s valuation (WTP) on bid $j$, $v_{ij}=b_{ij}$ holds in an IC mechanism, $p_{ij}$ denotes user $i$'s payment for bid $j$ defined in \eq(\ref{eq3}). 

In the PAYG mechanism, if user $i$ bids truthfully, her bidding price on bid $j$ is equal to her true valuation (WTP) on bid $j$, $b_{ij}=v_{ij}$; if bid $j$ is accepted, $x_{ij}=1$, her utility is $u_{ij}=v_{ij}-p_{ij}$; if bid $j$ is rejected, $x_{ij}=0$, her utility is $u_{ij}=0$. If user $i$ lies about her bidding price, $\hat b_{ij}\neq v_{ij}$; if bid $j$ is accepted, user $i$'s utility is still $\hat u_{ij}= v_{ij}-p_{ij}$; if bid $j$ is rejected, her utility is not greater than 0. We enumerate all possible cases to compare user $i$'s utility for bid $j$ with regards to truthful and non-truthful bidding behavior in \T \ref{T3}.
\begin{table*}[ht!]
\setlength{\abovecaptionskip}{0pt}
\setlength{\belowcaptionskip}{0pt}
\small
\centering
\caption{Utility comparison under truthful and non-truthful bidding}
\begin{tabular}{ccccccc}
\toprule
\multirow{2}*{ No. } &\multirow{2}*{ Cases } & \multicolumn{2}{c} {Non-truthful bidding} & \multicolumn{2}{c} {Truthful bidding} &\multirow{2}* { Utility comparison } \\
\cline { 3 - 6 } && $\hat x_{ij}$ & $\hat u_{ij}$ & $ x_{ij}$ & $ u_{ij}$  \\
\midrule
1&$v_{i j}<p_{i j}<\hat b_{i j}$ & $1$ & $v_{i j}-p_{i j}$ & $0$ & $0$ & $\hat u_{i j }< u_{i j}$ \\
 2&$v_{i j}<\hat b_{i j}<p_{i j}$ & $0$ & $0$ & $0$ & $0$ & $\hat u_{i j }= u_{i j }$ \\
3&$p_{i j}<v_{i j}<\hat b_{ij}$ & $1$ & $v_{i j}-p_{i j}$ & $1$ & $v_{i j}-p_{i j}$ & $\hat u_{i j}= u_{i j }$ \\
 4&$\hat b_{i j}<v_{i j}<p_{i j}$ & $0$ & $0$ & $0$ & $0$ & $\hat u_{i j}= u_{i j }$ \\
5&$\hat b_{ij}<p_{i j}<v_{i j}$ & $0$ & $0$ & $1$ & $v_{i j}-p_{i j}$ & $\hat u_{i j}< u_{i j}$ \\
6&$p_{i j}<\hat b_{i j}<v_{i j}$ & $1$ & $v_{i j}-p_{i j}$ & $1$ & $v_{i j}-p_{i j}$ & $\hat u_{i j}= u_{i j}$ \\
\bottomrule
\end{tabular}
\label{T3}
\end{table*}

\T \ref{T3} shows that user $i$'s utility for bid $j$ when bidding truthfully is not less than her utility when bidding non-truthfully, i.e. $u_{ij}\geq \hat u_{ij}$ holds under the proposed PAYG mechanism. Since bidding truthfully is weakly dominant strategy, the proposed PAYG mechanism is IC.

For PAAP mechanism, the dual problem of the compact LP is:
\begin{subequations}
\label{ddd}
\allowdisplaybreaks
\begin{align}
&\min\ A_{t} q(t)+\sum_{i \in \mathcal{I}(t)} u_{i},\label{Duala}\\
&\text{subject to:}  && \nonumber \\
&Q_{i,s} q(t)+u_{i} \geq b_{i,s}, &&\forall i \in \mathcal{I}(t), s \in \mathcal{S}_{i},\label{Dualb}\\
&q(t)\geq 0,\label{DualDualDualc}\\
&u_{i}\geq 0,&& \forall i \in \mathcal{I}(t),\label{Duald}
\end{align}\label{Dual}
\end{subequations}
where $q(t)$ and $u_{i}$ are the dual variables corresponding to constraints (\ref{2b}) and (\ref{2c}), respectively, and (\ref{Dualb}) is the dual constraint corresponding to the primal variable $\chi_{i,s}$ in the compact LP.\\

According to primal-dual complementary slackness, if $u_{i}>0$, constraint (\ref{2c}) is binding, $\sum_{s\in \mathcal{S}_{i}}\chi_{i,s}= 1, \forall i \in \mathcal{I}(t)$, namely, user $i$ will be allocated with the requested quantity of resources; else if $u_{i}=0$, then we have $\sum_{i\in \mathcal{I}(t)}\sum_{s\in \mathcal{S}_{i}}\chi_{i,s}\leq 1$, namely, user $i$ will be fractionally allocated with the requested quantity of resources. Moreover, if $\chi_{i,s}>0$, constraint (\ref{2b}) is binding, $u_{i}=\sum_{i \in \mathcal{I}(t)}\sum_{s \in \mathcal{S}_{i}}b_{i,s}-Q_{i,s} q(t)$; if $\chi_{i,s}=0$, $u_{i}\geq \sum_{i \in \mathcal{I}(t)}\sum_{s \in \mathcal{S}_{i}}b_{i,s}-Q_{i,s} q(t)$. Hence:
\begin{equation}
u_{i}^{*}=\max \left\{0, \left\{b_{i,s}-Q_{i,s} q^{*}(t)\right\}\right\}, \quad \forall i \in \mathcal{I}(t), s\in \mathcal{S}_{i}. \label{utility}
\end{equation}

where $u_{i}^{*}$ and $q^{*}(t)$ denote the optimal solutions of the dual problem \eqref{ddd}. If we interpret $q(t)$ as the unit price at time slot $t$, then  $Q_{i,s} q(t)$ can be interpreted as user $i$'s payment for package bundle $s$ and $u_{i}$ can be interpreted as user $i$'s utility obtained from package bundle $s$. According to \eq\eqref{utility}, if $u_{i}\geq 0$, the proposed PAAP mechanism maximizes user $i$'s utility. \\

Since we have proved that the maximum utility of each user holds at the optimal solution of the compact LP and its dual problem, we conduct a sensitivity analysis on user $i$'s bidding price ($b_{is}$) through the simplex tableau method to show IC. Without any loss of generality, we consider the case in which two users $i$ and $i+1$ bid for MaaS bundle ($s$) with bidding price $b_{i,s}$ and $b_{i+1,s}$. Assume that user $i$ is non-truthful, i.e., bids higher or lower than her valuation for MaaS bundle $s$ ($v_{i,s}$). The bid of user $i$ can be written as $\hat{b}_{i,s}=b_{i,s}+\Delta b$, with $b_{i,s}=v_{i,s}$. In turn, assume that user $i+1$ bids truthfully, i.e. $b_{i+1,s}=v_{i+1,s}$. The standardized formulation of this LP can be formulated as LP (\ref{STD}):
\begin{subequations}
\allowdisplaybreaks
\begin{align}
&\max b_{i,s}\chi_{i,s}+b_{i+1,s}\chi_{i+1,s}, \label{STDa}\\
&\text{subject to:}  && \nonumber \\
&Q_{i,s} \chi_{i,s}+Q_{i+1,s} \chi_{i+1,s}+y_{1} = A_{t},\label{STDb}\\
&\chi_{i,s}+y_{2}=1, \label{STDc}\\
&\chi_{i+1,s}+y_{3}=1, \label{STDd}\\
&\chi_{i,s},\chi_{i+1,s},y_{1},y_{2},y_{3}\geq 0,\label{STDe}
\end{align}\label{STD}
\end{subequations}
where $y_{1}$, $y_{2}$ and $y_{3}$ are the slack variables corresponding to constraints (\ref{STDb}),(\ref{STDc}) and (\ref{STDd}), respectively. The initial simplex tableau (\textcolor{Cerulean}{Table 3.1}) obtained from the standard formulation LP (\ref{STD}) can be converted into the final simplex tableau (\textcolor{Cerulean}{Table 3.2}) via standard methods.

\begin {table*}[ht!]
\centering
\setlength{\abovecaptionskip}{0pt}
\setlength{\belowcaptionskip}{0pt}
\caption{Simplex tableau before sensitivity analysis on $b_{i,s}$}
\footnotesize
\begin{tabular}{|c|c|c|c|c|c|}
\hline
\multicolumn{6}{|c|}{Table 3.1: Initial simplex tableau} \\
\hline $c$& $b_{i,s}$ & $b_{i+1,s}$ & 0 & 0 & 0\\
\hline basic var& $\chi_{i, s}$ & $\chi_{i+1, s}$ & $y_{1}$ & $y_{2}$& $y_{3}$ \\
\hline$y_{1}$ & $Q_{i,s}$ & $Q_{i+1,s}$ & 1 & 0 & 0\\
\hline $y_{2}$ & 1 & 0 & 0 & 1 & 0 \\
\hline $y_{3}$ & 0 & 1 & 0 & 0 & 1 \\
\hline $z$&0&0&0&0&0\\
\hline $\sigma=c-z$ & $b_{i,s}$&  $b_{i+1,s}$ & 0 & 0 & 0\\
\hline
\end{tabular}
$\Rightarrow$
\begin{tabular}{|c|c|c|c|c|c|}
\hline
\multicolumn{6}{|c|}{Table 3.2: Final simplex tableau}\\
\hline $c$ & $b_{i,s}$ & $b_{i+1,s}$ & 0 & 0&0 \\
\hline basic var& $\chi_{i, s}^{*}$ & $\chi_{i+1, s}^{*}$ & $y_{1}^{*}$ & $y_{2}^{*}$& $y_{3}^{*}$\\
\hline $\chi_{i,s}^{*}$&1 & 0 & 0&1&0\\ 
\hline $y_{3}^{*}$ & 0 & 0 & $\frac{-1}{Q_{i+1,s}}$ &$\frac{-Q_{i,s}}{Q_{i+1,s}}$ & 1\\
\hline $\chi_{i+1,s}^{*}$ & 0 & 1 & $\frac{1}{Q_{i+1,s}}$  & $\frac{Q_{i,s}}{Q_{i+1,s}}$& 0 \\
\hline $z$&$b_{i,s}$&$b_{i+1}$&$\frac{b_{i+1,s}}{Q_{i+1,s}}$&$\frac{b_{i,s}Q_{i+1,s}+b_{i+1} Q_{i,s}}{Q_{i+1,s}}$&0\\
\hline $\sigma=c-z$ & 0 & 0 & $-\frac{b_{i+1,s}}{Q_{i+1,s}}$ & $-\frac{b_{i,s}Q_{i+1,s}+b_{i+1,s} Q_{i,s}}{Q_{i+1,s}}$&0\\
\hline
\end{tabular}
\end{table*}
\begin{table*}[ht!]
\setlength{\abovecaptionskip}{0pt}
\setlength{\belowcaptionskip}{0pt}
\caption{Simplex tableau after sensitivity analysis on $b_{i,s}$}
\centering
\footnotesize
$\Rightarrow$
\begin{tabular}{|c|c|c|c|c|c|}
\hline $c$ & $b_{i,s}+\Delta b$ & $b_{i+1}$ & 0 & 0&0 \\
\hline basic variable & $\hat {\chi}_{i,s}$ & $\hat {\chi}_{i+1,s}$ & $\hat {y}_{1}$ & $\hat {y}_{2}$& $\hat {y}_{3}$\\
\hline  $\hat {\chi}_{i,s}$&1 & 0 & 0&1&0\\ 
\hline$\hat {y}_{3}$ & 0 & 0 & $\frac{-1}{Q_{i+1,s}}$ &$\frac{-Q_{i,s}}{Q_{i+1,s}}$ & 1\\
\hline$\hat {\chi}_{i+1,s}$ & 0 & 1 & $\frac{1}{Q_{i+1,s}}$  & $\frac{Q_{i,s}}{Q_{i+1,s}}$& 0 \\
\hline $z$&$b_{i,s}$&$b_{i+1,s}$&$\frac{b_{i+1,s}}{Q_{i+1,s}}$&$\frac{(b_{i,s}+\Delta b)Q_{i+1,s}+b_{i+1,s} Q_{i,s}}{Q_{i+1,s}}$&0\\
\hline $\sigma=c-z$ & 0 & 0 & $-\frac{b_{i+1,s}}{Q_{i+1,s}}$ & $-\frac{(b_{i,s}+\Delta b)Q_{i+1,s}+b_{i+1,s} Q_{i,s}}{Q_{i+1,s}}$& 0\\
\hline
\end{tabular}
\label{TT4}
\end{table*}
 
In \T\ref{TT4}, we conduct sensitivity analysis on $b_{i,s}$ by substituting  $b_{i,s}$ in  \textcolor{Cerulean}{Table 3.2} with $\hat{b}_{i,s}=b_{i,s}$+$\Delta b$. Since the reduced cost of $\hat{y}_{1}$ and $\hat{y}_{2}$ always hold non-positive values, $\sigma\left(\hat{y}_{1}\right)\leq 0$, $\sigma\left(\hat{y}_{2}\right)\leq 0$, the optimal solutions do not change with $\Delta b$. Then we investigate how the dual variables $\hat{u}_{i}$ and $\hat{q}(t)$ change with $\Delta b$. The reduced cost coefficients ($\sigma$) corresponding to the slack variables in the primal final tableau give the opposite value of dual variables. The value of the $i$th dual variable equals to the opposite value of the reduced cost coefficient of the slack variable associated with the $i$th primal constraint \citep{arora2004introduction}. Since the slack variable $\hat y_{1}$ in  \T\ref{TT4} corresponds to the dual variable $\hat q(t)$, we have $\hat{q}(t)=-\sigma(\hat {y}_{1})=\frac{b_{i+1,s}}{Q_{i+1,s}}$. In \textcolor{Cerulean}{Table 3.2} we have $q^{*}(t)=-\sigma({y}_{1}^{*})=\frac{b_{i+1,s}}{Q_{i+1,s}}$, the change value of ${q}(t)$ is: $\Delta q(t)=\hat{q}(t)-q^{*}(t)=0$.

Since $\hat q(t)$ can be interpreted as the unit price at time slot $t$, user $i$'s utility is $\hat u_{i}=v_{i,s}-Q_{i,s} \hat q(t)$, the change in user $i$'s utility ($\Delta u_{i}$) is 0. Moreover, the slack variable $y_{2}$ corresponding to the dual variable $u_{i}$, and thus we have: $\hat{u}_{i}=\sigma(\hat{y}_{2})=\frac{(b_{i}+\Delta b)Q_{i+1,s}+b_{i+1} Q_{i,s}}{Q_{i+1,s}}$, $u_{i}^{*}=\sigma(y_{2}^{*})=\frac{b_{i}Q_{i+1,s}+b_{i+1} Q_{i,s}}{Q_{i+1,s}}$.   
Note that user $i$'s true valuation on package bundle $s$ is $b_{i,s}$, instead of $b_{i,s}+\Delta b$, thus user $i$'s utility is $\hat{u}_{i}-\Delta b$, and the change in user $i$'s utility is : $\Delta u_{i}=\hat{u}_{i}-\Delta b-u_{i}^{*}=0.$

We have shown that if user $i$ bids higher than her true valuation ($\Delta b > 0$), her utility remains unchanged, whereas if she bids lower than this true valuation ($\Delta b < 0$), her utility may remain the same or be reduced. Hence bidding truthfully is a user's weakly dominant strategy under proposed the PAAP mechanism. 
\end{proof}

\subsection{Primal-dual online algorithms}
\label{PAYGpd}
 The compact integer program (IP) for online mobility resource allocation given in \Lemma \ref{L1} falls within the literature on the MKP \citep{bertsimas2002approximate}, which is is NP-hard if the decision variable is binary, and thus motivates us to design a heuristic algorithm. In this section, we propose customized primal-dual algorithms for the online mobility resource allocation problems at hand \citep{borodin2005online, buchbinder2009online}. The compact IP can be converted into a compact LP by relaxing the binary variable $\chi_{i,s}\in \left\{0,1\right\}$ to the continuous variable $\chi_{i,s}\geq 0$. The relaxed compact LP $OPT_{1}(t)$ and its dual problem $OPT_{2}(t)$ are summarized as follows:\\
$$\begin{array}{l|ll}
\multicolumn{1}{c|} {\text { \textbf{Primal online problem $OPT_{1}(t)$} }} & \multicolumn{1}{c} {\text { \textbf{Dual online problem $OPT_{2}(t)$}  }} \\
\hline \max \sum_{i \in \mathcal{I}(t)} \sum_{s \in \mathcal{S}_{i}} b_{i,s}\chi_{i,s}, & \min A_{t} q(t)+\sum_{i \in \mathcal{I}(t)} u_{i},\\
\text { subject to: } & \text { subject to: } \\
\sum_{i \in \mathcal{I}(t)}\sum_{s \in \mathcal{S}_{i}}  Q_{i,s} \chi_{i,s} \leq A_{t}, & Q_{i,s} q(t)+u_{i} \geq b_{i,s},\forall i \in \mathcal{I}(t),s \in \mathcal{S}_{i}, \\
\sum_{s\in \mathcal{S}_{i}} \chi_{i,s} \leq 1, \forall i \in \mathcal{I}(t), & q(t)\geq 0, \\
\chi_{i,s} \geq 0,\forall i \in \mathcal{I}(t), s \in \mathcal{S}_{i}, &u_{i}\geq 0, \forall i \in \mathcal{I}(t),
\end{array}$$

\noindent where the primal variable $\chi_{i,s}$ corresponds to the dual constraint; and where $q(t)$ and $u_{i}$ are the dual variables corresponding to the two constraints of the primal problem $OPT_{1}(t)$ at time slot $t$. Observe that $q(t)$ and $u_{i}$ can be interpreted as the unit price at time slot $t$ and user $i$'s utility. 
\begin{figure}[ht!]
\centering
\includegraphics[width=0.5\textwidth]{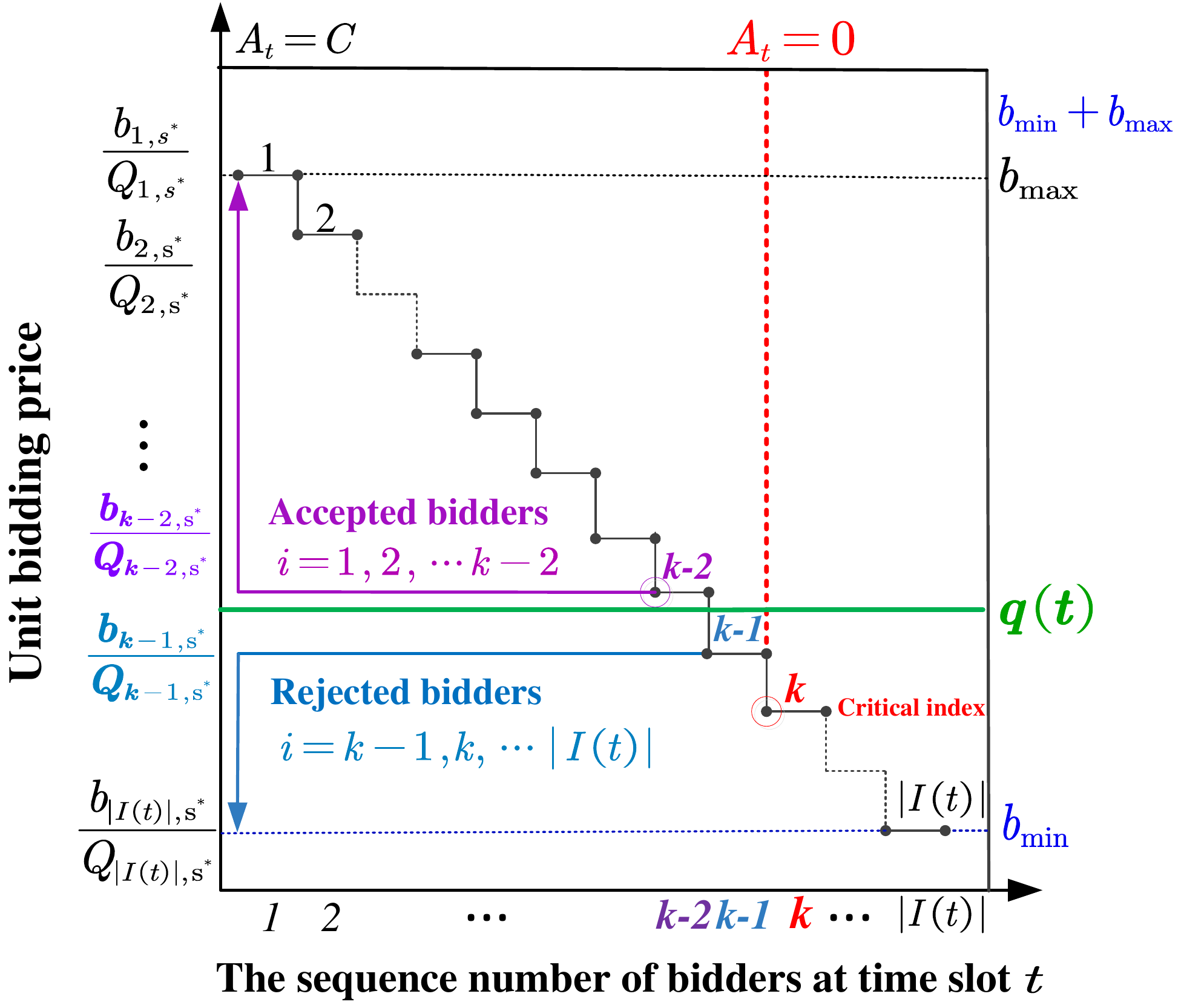}
\caption{Critical index selection and Acceptance/rejection determination}
\label{Fig3}
\end{figure}

\begin{algorithm}[!ht]
\small
\setstretch{1.0}
$\boldsymbol{A} [1,2,\cdots,\omega]\leftarrow C$ \\
\For{$t \in \Omega$}{
	$\overline{Q}_{i}\leftarrow \max_{j\in\mathcal{J}_{i}} \left\{Q_{ij}\right\}, \forall i\in \mathcal{I}(t)$\\
	  $j_{i}^{*}\leftarrow\arg \max_{j \in \mathcal{J}_{i}}\left\{\frac{b_{i j}}{Q_{ij}}\right\},\forall i\in \mathcal{I}(t)$\\
	$\bar{\mathcal{I}}(t) \gets$ sort $\mathcal{I}(t)$ by decreasing unit bidding price $b_{ij_i^*}/Q_{ij_i^*}$\\
	$\bar{\mathcal{J}}_i \gets$ sort $\mathcal{J}_{i}$ by decreasing unit bidding price $b_{ij}/Q_{ij}$\\
	select the critical index $k$ satisfying $\sum_{i=1}^{k-1} \overline{Q}_{i} \leq A_{t}<\sum_{i=1}^{k} \overline{Q}_{i}$\\
	$\bar{\mathcal{I}}_k(t) \gets [1,2,\cdots,k-1]$ \\
	$\overline{R}_{t}\leftarrow\max_{i\in \mathcal{I}(t)}\left\{\frac{\overline{Q}_{i}}{A_{t}}\right\}$\\
	$\overline{\alpha}_{t}\leftarrow(1+\overline{R}_{t})^{\frac{1}{\overline{R}_{t}}}$\\
	$A_{t}\leftarrow \boldsymbol{A}[t]$\\
	$q(t)\leftarrow 0$\\
	$\bm{x} \gets \bm{0}$\\
	\For{$i \in \bar{\mathcal{I}}_k(t)$}{
	    $\mathcal{J}'_i \gets \emptyset$ \\
		\For{$j \in \bar{\mathcal{J}}_{i}$}{
			\If{$q(t)\leq\frac{b_{ij}}{Q_{ij}}$ and $\mathcal{S}_{ij} \neq \emptyset$}{
		        $q(t)\leftarrow q(t)(1+\frac{\overline{Q}_{i}}{A_{t}})+\frac{b_{ij}}{(\overline{\alpha}_{t}-1)A_{t}}$\\
                $\mathcal{J}'_i \gets \mathcal{J}'_i \cup \{j\}$ 
		    }
		}
		\If{$\mathcal{J}'_i \neq \emptyset$}{
		    $j^\star \leftarrow\arg\max_{j \in \mathcal{J}'_i}\left\{b_{i j}-\overline{Q}_{i} q(t)\right\}$\\
		    $x_{ij^\star} \leftarrow 1$\\
		    $[l_{ij^\star}^m]_{m\in\mathcal{M}} \gets$ solve $\mathcal{S}_{ij^\star}$\\
	        $N_{i}\leftarrow\lceil\sum_{m \in \mathcal{M}} l_{ij^\star}^{m} \rceil$\\
    	    $\boldsymbol{A} \left[O_{i}:O_{i}+N_{i}-1\right]\leftarrow \boldsymbol{A} \left[O_{i}:O_{i}+N_{i}-1\right]-Q_{ij^\star}$ 
    	}
	}
}
\Return $\boldsymbol{x}$ and $\boldsymbol{q}$\\
\caption{PAYG primal-dual online algorithm}
\label{alg1}
\end{algorithm}

In \Alg \ref{alg1}, $\omega = |\Omega|$ denotes the last time slot, Line 1 initializes the weighted quantity of available resources at each time slot, Line 3 selects each user's maximum weighted quantity of mobility resources  among her multi-bids. Line 4 $\sim$ Line 8 show critical index selection in the  auction process, which are illustrated in \Fig\ref{Fig3}. Line 4 selects the bid with the maximum unit bidding price of each traveler, Line 5 sorts users by decreasing maximum unit bidding price, Line 6  sorts each user's bids by decreasing the unit bidding price, Line 7 determines the critical index $k$ based on the available mobility resources in time slot $t$ and Line 8 selects users $1, 2, \ldots, k-1$ as participants for the auction at time $t$. Line 9 and Line 10 define the parameters $\overline{R}_{t}$ and $\overline{\alpha}_{t}$. Line 12 $\sim$ Line 22 show the iterations of primal variables, dual variables in a time loop. The iteration rule of $q(t)$ (Line 18) is determined by the exponential unit price function in \eq(\ref{eq8}), Line 17 and Line 22 indicate the acceptance determination in the auction process, which is illustrated in \Fig\ref{Fig3}. Line 21 guarantees that each user is allocated with the bid $j^{*}$ that can maximize her utility. Line 23 solves a feasibility problem $\mathcal{S}_{ij}$ (an LP with 4 constraints, $|\mathcal{M}|$ variables and $|\mathcal{M}|$ bound constraints) to obtain the vector $[l_{ij^\star}^m]$, which denotes a feasible MaaS bundle. Line 24 counts the total number of time slots arranged for each user in a MaaS bundle. Line 25 updates the available mobility resources based on the resources allocated to user $i$, i.e. the available mobility resources $A_t$ are reduced by $Q_{ij^*} x_{ij^*}$ for all time slots $t \in [O_{i}, O_{i}+N_{i}]$ where $N_i$ is the number of time slots allocated to user $i$. 

\begin{lemma}
\label{L2}
Let $\mathcal{S}$ denotes the largest compact set of bundles across all users $i \in \mathcal{I}(t)$. The worst-case time complexity of \Alg \ref{alg1} is $\mathcal{O}(|\Omega| |\mathcal{S}||\mathcal{I}(t)|)$.
\end{lemma}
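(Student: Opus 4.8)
The plan is to establish the bound by a direct operation count that walks through the three nested loops of \Alg \ref{alg1} and shows that the work charged per innermost iteration is constant in the asymptotic parameters $|\Omega|$, $|\mathcal{I}(t)|$ and $|\mathcal{S}|$. The outermost loop (Line 2) executes once per time slot, contributing the factor $|\Omega|$. Within a given slot, the dominant computation is the double loop of Lines 14--25: its outer loop ranges over the selected users $\bar{\mathcal{I}}_k(t)$, of which there are at most $|\mathcal{I}(t)|$, and its inner loop ranges over the sorted bids $\bar{\mathcal{J}}_i$ of user $i$. Since the algorithm materializes exactly one feasible bundle per bid (the vector returned on Line 23), the per-user bundle count equals $|\mathcal{J}_i|$, so the inner loop depth is at most $|\mathcal{S}| \triangleq \max_{i \in \mathcal{I}(t)}|\mathcal{S}_i|$ by the definition of $\mathcal{S}$. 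Multiplying the three loop depths yields the target $\mathcal{O}(|\Omega|\,|\mathcal{S}|\,|\mathcal{I}(t)|)$, provided the per-iteration body and the per-slot preprocessing do not inflate this product.

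First I would bound the body of the innermost loop (Lines 16--19). The price update on Line 18 is a single arithmetic expression, hence $O(1)$; the only nontrivial step is the emptiness test $\mathcal{S}_{ij} \neq \emptyset$ on Line 17. As observed after \Alg \ref{alg1}, this amounts to solving a feasibility LP with four constraints, $|\mathcal{M}|$ variables and $|\mathcal{M}|$ bound constraints; because $|\mathcal{M}|$, the number of travel modes, is a fixed problem parameter that does not scale with the instance, this LP has constant size and is solved in $O(1)$ time. The same reasoning covers the single bundle LP on Line 23 and the $\arg\max$ on Line 21, both of which run at most once per user rather than once per bid. I would then account for the per-slot preprocessing of Lines 3--13: computing $\overline{Q}_i$ and $j_i^*$ (Lines 3--4) scans each user's bids once at cost $O(\sum_i |\mathcal{J}_i|) = O(|\mathcal{I}(t)|\,|\mathcal{S}|)$, while critical-index selection (Line 7) and the parameter definitions (Lines 8--13) are $O(|\mathcal{I}(t)|)$; all of these sit inside the claimed bound.

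The main obstacle is reconciling the stated bound with the sorting steps of Lines 5--6 and the availability update of Line 25, each of which a naive count makes larger than $\mathcal{O}(|\Omega|\,|\mathcal{S}|\,|\mathcal{I}(t)|)$. Sorting the users (Line 5) costs $O(|\mathcal{I}(t)|\log|\mathcal{I}(t)|)$ and sorting every user's bids (Line 6) costs $O(\sum_i |\mathcal{J}_i|\log|\mathcal{J}_i|) = O(|\mathcal{I}(t)|\,|\mathcal{S}|\log|\mathcal{S}|)$, so both carry a logarithmic overhead absent from the target. I would resolve this by expressing the running time in its natural cost unit, the constant-size LP feasibility solve: the double loop performs $\Theta(|\mathcal{I}(t)|\,|\mathcal{S}|)$ such solves per slot, and since a single LP solve dominates the cost of $\log|\mathcal{S}|$ scalar comparisons, the sorting is lower-order and is absorbed, leaving $O(|\mathcal{I}(t)|\,|\mathcal{S}|)$ per slot. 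For Line 25, which rewrites $N_i = \lceil \sum_{m} l_{ij^\star}^m \rceil$ array entries, I would use that in the PAYG horizon $N_i$ is the bounded span of a single trip, so the update is $O(1)$ and is triggered at most once per user. Combining the per-slot cost $O(|\mathcal{I}(t)|\,|\mathcal{S}|)$ with the $|\Omega|$ iterations of the outer loop gives the worst-case complexity $\mathcal{O}(|\Omega|\,|\mathcal{S}|\,|\mathcal{I}(t)|)$, where $|\mathcal{I}(t)|$ is read as $\max_{t \in \Omega}|\mathcal{I}(t)|$.
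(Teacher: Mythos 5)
Your proof is correct and follows essentially the same route as the paper's: the paper's own argument is just two sentences observing that the preprocessing (sorting) in Lines 3--11 costs $\mathcal{O}(|\mathcal{I}(t)|\log|\mathcal{I}(t)|)$ and is dominated by the $\mathcal{O}(|\mathcal{I}(t)|\,|\mathcal{S}|)$ double loop, multiplied by the $|\Omega|$ outer iterations. You supply details the paper omits --- the constant-size LP feasibility checks, the bid-sorting log factor, and the Line~25 update --- but the decomposition and the conclusion are the same.
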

\begin{proof}
The operations in Lines 3 to 11 have a worst-case time complexity of $\mathcal{O}(|\mathcal{I}(t)| \log |\mathcal{I}(t)|)$ corresponding to the sort operation in Line 5. This is dominated by the \textbf{for} loop starting from Line 12 which requires $\mathcal{O}(|\mathcal{I}(t)||\mathcal{S}|)$ time. 
\end{proof}
\begin{lemma}\label{L3}
In each time iteration of \Alg \ref{alg1} , $\Delta \mathcal{P}(i) = (1-\frac{1}{\overline{\alpha}_{t}}) \Delta\mathcal{D}(i)$  holds, where $\Delta \mathcal{D}(i)$ and $\Delta\mathcal{P}(i)$ denotes the change value in the objective functions of dual and primal problems, respectively. 
\end{lemma}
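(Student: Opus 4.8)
The plan is to follow the standard online primal-dual accounting: during the single iteration of the outer \textbf{for} loop (Line 14) that processes user $i$, I would track the simultaneous increments $\Delta\mathcal{P}(i)$ and $\Delta\mathcal{D}(i)$ to the objectives of the primal problem $OPT_1(t)$ and the dual problem $OPT_2(t)$, and show that the exponential update of $q(t)$ in Line 18 is calibrated so that their ratio is exactly the constant $1-\tfrac{1}{\overline{\alpha}_{t}}$. First I would pin down the primal increment. Serving user $i$ amounts to flipping $\chi_{i,s}$ from $0$ to $1$ for the bundle $s$ of the accepted bid $j^\star$ (Line 22), so since the objective of $OPT_1(t)$ is $\sum_i\sum_s b_{i,s}\chi_{i,s}$, I get $\Delta\mathcal{P}(i)=b_{ij^\star}$.

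Next I would compute the dual increment. The objective of $OPT_2(t)$ is $A_t q(t)+\sum_i u_i$, so processing user $i$ changes it by $A_t\,\Delta q$ (from the price update) plus the utility $u_i$ newly assigned to user $i$. Writing $q$ for the value of $q(t)$ before the update and substituting Line 18,
\begin{equation}
A_t\,\Delta q = A_t\!\left[\,q\,\frac{\overline{Q}_i}{A_t}+\frac{b_{ij^\star}}{(\overline{\alpha}_{t}-1)A_t}\,\right] = \overline{Q}_i\,q+\frac{b_{ij^\star}}{\overline{\alpha}_{t}-1}. \nonumber
\end{equation}
Consistent with the dual constraint $Q_{i,s}q(t)+u_i\ge b_{i,s}$ of $OPT_2(t)$ and the acceptance rule in Line 21, I would set $u_i=b_{ij^\star}-\overline{Q}_i\,q$ (its smallest nonnegative admissible value). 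Adding the two contributions, the $\overline{Q}_i\,q$ terms cancel and
\begin{equation}
\Delta\mathcal{D}(i)=A_t\,\Delta q+u_i=\frac{b_{ij^\star}}{\overline{\alpha}_{t}-1}+b_{ij^\star}=b_{ij^\star}\,\frac{\overline{\alpha}_{t}}{\overline{\alpha}_{t}-1}. \nonumber
\end{equation}
Combining with $\Delta\mathcal{P}(i)=b_{ij^\star}$ gives $\Delta\mathcal{P}(i)=\frac{\overline{\alpha}_{t}-1}{\overline{\alpha}_{t}}\,\Delta\mathcal{D}(i)=\left(1-\frac{1}{\overline{\alpha}_{t}}\right)\Delta\mathcal{D}(i)$, which is the claim.

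The steps I expect to be delicate are the dual bookkeeping, and this is where I would spend most of the effort. First, I must justify that $u_i=b_{ij^\star}-\overline{Q}_i q\ge 0$ so that the maintained dual solution stays feasible; this is where the qualification test of Line 17 and the conservative use of $\overline{Q}_i$ (rather than the bid-specific $Q_{ij^\star}$) in both the price update and the utility must be reconciled. Second, and more importantly, the inner loop over $\bar{\mathcal{J}}_i$ may update $q(t)$ once for every bid placed in $\mathcal{J}'_i$, whereas only the single accepted bid $j^\star$ contributes to $\Delta\mathcal{P}(i)$; I would need to show that, for the per-user identity, the relevant dual increment is the one tied to $j^\star$ --- either by arguing that the monotone growth of $q(t)$ together with the decreasing-unit-price ordering of $\bar{\mathcal{J}}_i$ leaves a single qualifying bid, or by defining $\Delta\mathcal{D}(i)$ as the increment attributable to $j^\star$. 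Once this accounting is fixed, the cancellation above closes the proof, and summing the identity over all users and time slots is what will later deliver the competitive ratio through weak duality.
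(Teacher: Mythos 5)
Your proposal follows essentially the same primal-dual accounting as the paper's own proof: the primal increment is $b_{ij^\star}$, the dual increment is $A_t\,\Delta q + u_i$ with $u_i = b_{ij^\star}-\overline{Q}_i q(t)$, and the $\overline{Q}_i q(t)$ terms cancel to give $\Delta\mathcal{D}(i)=b_{ij^\star}\bigl(\tfrac{1}{\overline{\alpha}_t-1}+1\bigr)$, which is exactly the paper's computation. The delicate point you flag --- that the inner loop may update $q(t)$ once per qualifying bid in $\mathcal{J}'_i$ while only the single accepted bid $j^\star$ contributes to the primal objective --- is a genuine bookkeeping subtlety, but the paper's own proof silently glosses over it too, treating the update as a single step tied to $j^\star$.
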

\begin{proof}
\indent In each iteration of \Alg\ref{alg1}, $\Delta q(t)$ is obtained from Line 18 in \Alg\ref{alg1}, and then let $u_{i}$ be $b_{ij^{*}}-\overline{Q}_{i}q(t),$ $\Delta\mathcal{D}(i)$ can be written as \eq \eqref{PD1}:

\begin{equation}
\label{PD1}
\begin{aligned}
\Delta\mathcal{D}(i)=A_{t}\Delta q(t)+u_{i}&=A_{t}\left[ q(t)\frac{\overline{Q}_{i}}{A_{t}}+\frac{b_{ij^{*}}}{(\overline{\alpha}_{t}-1)A_{t}}\right]+u_{i},\\
&=\overline{Q}_{i}q(t)+\frac{b_{ij^{*}}}{\overline{\alpha}_{t}-1}+b_{ij^{*}}-\overline{Q}_{i}q(t),\\
&=b_{ij^{*}}\left(\frac{1}{\overline{\alpha}_{t}-1}+1\right).
\end{aligned}
\end{equation}

Since $x_{ij^{*}} =1$ (Line 22), the change  value in objective function of primal problem is $\Delta \mathcal{P}(i)=b_{i,s^{*}}$. Moreover, according to \Lemma\ref{L1}, we have $b_{i,s^{*}}=b_{ij^{*}}$, thus the relationship between $\Delta\mathcal{D}(i)$ and $\Delta\mathcal{P}(i)$ is:
\begin{equation}
\Delta\mathcal{D}(i) = \left(\frac{1}{\overline{\alpha}_{t}-1}+1\right)\Delta \mathcal{P}(i).\label{9}
\end{equation}
\end{proof}

\begin{lemma}\label{L4}
\Alg \ref{alg1} constructs feasible solutions for both the primal and dual online problems.
\end{lemma}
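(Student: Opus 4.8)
The plan is to treat the primal program $OPT_{1}(t)$ and its dual $OPT_{2}(t)$ separately and to check, constraint by constraint, that the quantities emitted by \Alg \ref{alg1} at time slot $t$ satisfy them. The single fact I would establish first, because it underpins both halves, is that the price $q(t)$ is nonnegative and nondecreasing throughout the loop: it starts at $0$ (Line 12) and every update (Line 18) adds $q(t)\overline{Q}_i/A_t + b_{ij}/\big((\overline{\alpha}_t-1)A_t\big)$, a sum of nonnegative terms since $\overline{\alpha}_t=(1+\overline{R}_t)^{1/\overline{R}_t}>1$ gives $\overline{\alpha}_t-1>0$. This monotonicity is the lever for the dual argument.

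For the primal, I would verify the three families of \eqref{CP1}. Nonnegativity and \eqref{2c} are immediate: the only variables made nonzero are the $\chi_{i,s}=x_{ij^\star}=1$ set in Line 22, and Lines 21--22 select at most one index $j^\star$ per user, so $\sum_{s\in\mathcal{S}_i}\chi_{i,s}\le 1$. For the capacity constraint \eqref{2b} I would invoke the critical-index rule: Line 7 fixes $k$ with $\sum_{i=1}^{k-1}\overline{Q}_i\le A_t$, only users in $\bar{\mathcal{I}}_k(t)=\{1,\dots,k-1\}$ are ever allocated, and each allocated user consumes $Q_{ij^\star}\le\overline{Q}_i$, whence $\sum_i Q_{ij^\star}x_{ij^\star}\le\sum_{i=1}^{k-1}\overline{Q}_i\le A_t$; I would additionally note that the across-slot update in Line 25 decrements $A[t']$ only by the amount just committed, so the same bound keeps every future single-slot instance feasible. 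Finally, that the allocation is a genuine bundle follows because Line 17 admits a bid only when $\mathcal{S}_{ij}\ne\emptyset$ and Line 23 returns a point of $\mathcal{S}_{ij^\star}$, which by \Lemma \ref{L1} satisfies \eqref{1b}--\eqref{1d}.

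For the dual, I would pair the algorithm's price $q(t)$ with the potentials $u_i=\max\big\{0,\ \max_{s\in\mathcal{S}_i}\{b_{i,s}-Q_{i,s}q(t)\}\big\}$ prescribed by \eqref{utility}. Nonnegativity of both dual variables is then built in, and the covering constraint \eqref{Dualb} holds termwise, since for each $s\in\mathcal{S}_i$ the definition gives $Q_{i,s}q(t)+u_i\ge Q_{i,s}q(t)+\big(b_{i,s}-Q_{i,s}q(t)\big)=b_{i,s}$. The remaining work is to reconcile this feasible dual with the potential $u_i=b_{ij^\star}-\overline{Q}_i q(t)$ used in \Lemma \ref{L3}: using $Q_{i,s}=Q_{ij}$ and $b_{i,s}=b_{ij}$ for $s\in\mathcal{S}_{ij}$, any bid $j$ rejected in Line 17 had $q(t)>b_{ij}/Q_{ij}$ at test time, so by monotonicity the terminal price already satisfies $b_{ij}-Q_{ij}q(t)<0$ and contributes nothing to the maximum, leaving only the admitted bids to control the potential.

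The step I expect to be the main obstacle is precisely this reconciliation together with the capacity bound. Two points need care. First, because $q(t)$ is incremented once per admitted bid of user $i$, I must argue against the terminal price rather than the acceptance-time price; monotonicity of $q(t)$ and the decreasing-unit-price processing order of Lines 5--6 are exactly what make the rejected bids harmless and keep the maximizing term aligned with the bid $j^\star$ chosen in Line 21. Second, establishing \eqref{2b} at every slot requires the critical-index inequality of Line 7 to interact correctly with the multi-slot decrement of Line 25; verifying that the recomputation of $k$ from the current $A[t']$ preserves feasibility at each future slot is the bookkeeping I would check most carefully.
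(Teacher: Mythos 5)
Your proof is correct, but the primal half takes a genuinely different route from the paper. For the capacity constraint \eqref{2b}, the paper does \emph{not} use the critical-index rule at all: it derives the geometric-sequence lower bound \eqref{25} on $q(t)$ in terms of the resources already committed, and then argues that once $\sum_{i}\sum_{j}Q_{ij}x_{ij}$ would reach $A_t$ the price satisfies $q(t)\ge b_{mj}/\overline{Q}_m$, so the acceptance test in Line 17 blocks all further allocation. Your argument instead reads feasibility directly off Lines 7--8: only users $1,\dots,k-1$ are ever processed, each consumes at most $\overline{Q}_i$, and $\sum_{i=1}^{k-1}\overline{Q}_i\le A_t$ by construction. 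This is shorter, avoids the chain of inequalities \eqref{22}--\eqref{25} entirely, and gives the bound unconditionally rather than via the contrapositive "if capacity were exceeded the price would already be prohibitive"; what it does not buy you is the quantitative relationship between $q(t)$ and the allocated volume, which the paper reuses verbatim in the competitive-ratio proof of Proposition \ref{theorem2} (inequality \eqref{ee29} is exactly \eqref{25}), so the paper's detour is doing double duty. On the dual side you and the paper use the same idea --- set $u_i$ to absorb the slack of the covering constraint at the terminal price --- but your choice $u_i=\max\bigl\{0,\max_{s\in\mathcal{S}_i}\{b_{i,s}-Q_{i,s}q(t)\}\bigr\}$ is the more careful one: it is manifestly nonnegative and covers every $s\in\mathcal{S}_i$ for every user including those beyond the critical index, whereas the paper's $u_i=b_{ij}-\overline{Q}_i q(t)$ leaves the rejected and unprocessed users implicit. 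Your closing worry about reconciling this $u_i$ with the one used in Lemma \ref{L3} is legitimate bookkeeping for the later competitive analysis, but it is not needed for the feasibility claim itself, and your monotonicity observation for $q(t)$ already disposes of the rejected bids.
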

 
\begin{proof}
We first prove that \Alg \ref{alg1} yields dual feasible solutions. Let $q(t)$ denote  the value of the dual variable at the end of each iteration in a time loop (Line 2). If $q(t)\geq \frac{b_{ij}}{\overline{Q}_{i}}$, then the dual constraint always holds; else if $q(t)\leq \frac{b_{ij}}{\overline{Q}_{i}}$, the dual variables will be increased until the dual constraints are satisfied. Let the dual variable $u_{i}$ be $b_{ij}-\overline{Q}_{i}q(t)$, the subsequent increase of $q(t)$ is always feasible due to the acceptance determination rules: if user $i$'s unit bidding price on bid $j$ is no less than the unit price at time slot $t$, i.e $q(t)\leq \frac{b_{ij}}{Q_{ij}}$, user $i$ will be allocated with the bid $j^{*}$ maximizing $b_{ij}-\overline{Q}_{i}q(t)$; otherwise, if $q(t)\geq \frac{b_{ij}}{Q_{ij}}$, user $i$ will be rejected ($x_{ij}=0$), and the dual variables will not be updated. 

We next show that \Alg \ref{alg1} yields primal feasible solutions. The iteration rule of $q(t)$ (Line 18) ensures that $q(t)$ is bounded by the sum of a geometric sequence with the common ratio ($1+\frac{\overline{Q}_{i}}{A_{t}}$). Consider a geometric sequence produced by the iterations of $q(t)$ for user $m$: the first item is  $\frac{b_{mj}}{(\overline{\alpha}_{t}-1)A_{t}}$, in which the value of $b_{mj}$ is fixed in each iteration (Lines 16$\sim$18), and the common ratio is $1+\frac{\overline{Q}_{m}}{A_{t}}$. Summing the terms of this geometric sequence gives:
\begin{equation}
\footnotesize
q(t)\geq \frac{b_{mj}}{\overline{Q}_{m}}\cdot\frac{1}{\overline{\alpha}_{t}-1}\cdot \left[\left(1+\frac{\overline{Q}_{m}}{A_{t}}\right)^{\sum_{i \in \mathcal{I}(t)}\sum_{j \in \mathcal{J}_{i}}x_{ij}}-1\right],\label{22}
\end{equation}
where the number of iterations in each time loop is larger than $\sum_{i \in \mathcal{I}(t)}\sum_{j \in \mathcal{J}_{i}}x_{ij}$. \ineq\eqref{22} can be rewritten as:
\begin{footnotesize}
\begin{equation}
q(t)\geq \frac{b_{mj}}{\overline{Q}_{m}}\cdot \frac{1}{\overline{\alpha}_{t}-1}\cdot \left[\left(1+\frac{\overline{Q}_{m}}{A_{t}}\right)^{\frac{A_{t}}{\overline{Q}_{m}}\cdot\frac{\sum_{i \in \mathcal{I}(t)}\sum_{j \in \mathcal{J}_{i}}\overline{Q}_{m}x_{ij}}{A_{t}}}-1\right].\label{22ee}
\end{equation}
\end{footnotesize}
Let $\overline{R}_{t}$ denote the maximum ratio of a user's requested quantity of mobility resources to the available resources at time slot $t$, $\overline{R}_{t}=\max_{i\in \mathcal{I}(t)}\left\{\frac{\overline{Q}_{i}}{A_{t}}\right\}$. Since $0\leq\frac{\overline{Q}_{i}}{A_{t}}\leq \frac{\overline{Q}_{m}}{A_{t}}\leq 1$:
\begin{equation}
\footnotesize
q(t)\geq \frac{b_{mj}}{Q_{m}}\cdot \frac{1}{\overline{\alpha}_{t}-1}\cdot \left[\left(1+\frac{Q_{m}}{A_{t}}\right)^{\frac{A_{t}}{Q_{m}}\cdot\frac{{\sum_{i\in \mathcal{I}(t)}\sum_{j \in \mathcal{J}_{i}}Q_{i}x_{ij}}}{A_{t}}}-1\right].\label{23}
\end{equation}

Since $0\leq \frac{\overline{Q}_{i}}{A_{t}}\leq \overline{R}_{t} \leq 1$, we have $\frac{\ln (1+\frac{\overline{Q}_{i}}{A_{t}})}{\frac{\overline{Q}_{i}}{A_{t}}} \geq \frac{\ln (1+\overline{R}_{t})}{\overline{R}_{t}}$, thus, $(1+\frac{\overline{Q}_{i}}{A_{t}})^{\overline{R}_{t}}\geq (1+\overline{R}_{t})^{\frac{\overline{Q}_{i}}{A_{t}}},$ and:
\begin{equation}
1+\frac{Q_{i}}{A_{t}}\geq (1+\overline{R}_{t})^{\frac{1}{\overline{R}_{t}}\cdot{\frac{\overline{Q}_{i}}{A_{t}}}}. \label{24}
\end{equation}

Since $\alpha_{t}=\left(1+\overline{R}_{t}\right)^{\frac{1}{\overline{R}_{t}}}$ and  $\overline{Q}_{i}=\max_{j\in\mathcal{J}_{i}} \left\{Q_{ij}\right\}$, substituting \ineq(\ref{24}) into \ineq (\ref{23}) yields:
\begin{equation}
\begin{aligned}
q(t) &\geq \frac{b_{mj}}{\overline{Q}_{m}}\cdot\frac{1}{\overline{\alpha}_{t}-1}\cdot\left(\overline{\alpha}_{t}^{\frac{{\sum_{i \in \mathcal{I}(t)}\sum_{j \in \mathcal{J}_{i}}Q_{ij}x_{ij}}}{A_{t}}}-1\right ).\label{25}
\end{aligned} 
\end{equation}

According to \eq\eqref{25}, if $\sum_{i \in \mathcal{I}(t)}\sum_{j \in \mathcal{J}_{i}}Q_{ij}x_{ij}\geq A_{t}$, then $q(t)\geq \frac{b_{mj}}{\overline{Q}_{m}}$. Since \Alg \ref{alg1} does not update the primal solution if $q(t)\geq \frac{b_{mj}}{\overline{Q}_{m}}$, the primal solutions will only be updated once $\sum_{i \in \mathcal{I}(t)}\sum_{j\in \mathcal{J}_{i}}Q_{ij}x_{ij}\leq A_{t}$ or $\sum_{i \in \mathcal{I}(t)}\sum_{s\in \mathcal{S}_{i}}Q_{i,s}\chi_{i,s}\leq A_{t}$.
\end{proof}

\textcolor{Cerulean}{Lemmas} \ref{L2}-\ref{L4} provide a worst-case time complexity bound on \Alg \ref{alg1} and show that this algorithm yields feasible to the online mobility resource allocation for the PAYG mechanism. The analog of \Alg \ref{alg1} for the PAAP mechanism is presented in \Alg \ref{alg2}, which shares a similar structure to the PAYG primal-dual online algorithm but differs in the updates of parameters and variables. We provide theoretical results for \Alg \ref{alg2} in \textcolor{Cerulean}{Lemmas} \ref{L7}-\ref{L9} which are analog to those obtained for \Alg \ref{alg1}. For conciseness, the proofs are given in \textcolor{Cerulean}{Supplementary material B}.

\begin{algorithm}[ht]
\small
\setstretch{1.0}
$\boldsymbol{A} [1,2,\cdots,\omega]\leftarrow C$ \\
\For{$t \in \Omega$}{
    $\underline{Q}_{i}\leftarrow \min_{j\in\mathcal{J}_{i}} \left\{Q_{ij}\right\}, \forall i\in \mathcal{I}(t)$\\
    $j_{i}^{*}\leftarrow\arg \max_{j \in \mathcal{J}_{i}}\left\{\frac{b_{i j}}{Q_{ij}}\right\},\forall i\in \mathcal{I}(t)$\\
	$\bar{\mathcal{I}}(t) \gets$ sort $\mathcal{I}(t)$ by decreasing unit bidding price $b_{ij_i^*}/Q_{ij_i^*}$\\
	$\bar{\mathcal{J}}_i \gets$ sort $\mathcal{J}_{i}$ by decreasing unit bidding price $b_{ij}/Q_{ij}$\\
	$\underline{R}_{t}\leftarrow\min_{i\in \mathcal{I}(t)}\left\{\frac{\underline{Q}_{i}}{A_{t}}\right\}$\\
	$\underline{\alpha}_{t}\leftarrow(1+\underline{R}_{t})^{\frac{1}{\underline{R}_{t}}}$\\
	$A_{t}\leftarrow \boldsymbol{A}[t]$\\
	$q(t)\leftarrow 0$\\
	\For{$i \in \bar{\mathcal{I}(t)}$}{ 
		\For{$j \in \bar{\mathcal{J}}_i$}{        
			\If{$q(t)\leq\frac{b_{ij}}{Q_{ij}}$}{
				$x_{ij}\leftarrow\frac{b_{ij}\sum_{j \in \mathcal{J}_{i}}Q_{ij}}{Q_{ij}\sum_{j \in \mathcal{J}_{i}}b_{ij}}$\\
	            $q(t)\leftarrow q(t)(1+\frac{\underline{Q}_{i}}{A_{t}})+\frac{b_{ij}x_{ij}}{(\underline{\alpha}_t-1)A_{t}}-\frac{(1-x_{ij})b_{ij}}{A_{t}}$\\
		    }
		    \ElseIf{$q(t)>\frac{b_{ij}}{Q_{ij}}$}{
    	        $x_{ij}\leftarrow0$\\
	    	}
		}
	    $\boldsymbol{A} \left[O_{i}:O_{i}+L_{i}-1\right]\leftarrow \boldsymbol{A} \left[O_{i}:O_{i}+L_{i}-1\right]-\sum_{j \in \mathcal{J}_{i}}Q_{ij}x_{ij}$ 
	}
}
\Return $\boldsymbol{x}$ and $\boldsymbol{q}$\\
\caption{PAAP primal-dual online algorithm}
\label{alg2}
\end{algorithm}

\begin{lemma}
\label{L7}
The worst-case time complexity of \Alg \ref{alg2} is $\mathcal{O}(|\Omega| |\mathcal{S}||\mathcal{I}(t)|)$.
\end{lemma}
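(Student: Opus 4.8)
The plan is to follow the same decomposition used in the proof of \Lemma \ref{L2}, separating the per-time-slot preprocessing from the nested allocation loop and then accounting for the outer iteration over $\Omega$. Concretely, I would fix a time slot $t \in \mathcal{I}(t)$'s enclosing slot $t \in \Omega$, bound the work performed inside the body of the outer \textbf{for} loop (Line 2), and finally multiply by $|\Omega|$.

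First I would bound the preprocessing in Lines 3--10. Computing $\underline{Q}_i$ and $j_i^*$ for every user scans each user's bid set once, costing $\mathcal{O}(\sum_{i \in \mathcal{I}(t)} |\mathcal{J}_i|)$, while evaluating $\underline{R}_t$, $\underline{\alpha}_t$, and the assignments in Lines 9--10 are lower order. The dominant preprocessing cost is the sort of users by decreasing unit bidding price in Line 5, which is $\mathcal{O}(|\mathcal{I}(t)| \log |\mathcal{I}(t)|)$, together with the per-user bid sorts in Line 6. Since $|\mathcal{J}_i| \leq |\mathcal{S}|$ for every $i$, all of these terms are bounded by $\mathcal{O}(|\mathcal{I}(t)| |\mathcal{S}|)$ up to the logarithmic factor.

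Next I would bound the nested \textbf{for} loop beginning at Line 11. Unlike \Alg \ref{alg1}, \Alg \ref{alg2} performs no critical-index pruning, so the outer loop ranges over all of $\bar{\mathcal{I}}(t)$; however, this does not worsen the asymptotic bound, since in the worst case the critical index of \Alg \ref{alg1} equals $|\mathcal{I}(t)|$. For each user the inner loop visits her sorted bids, and every iteration performs only constant work: the conditional test in Line 13, the closed-form assignment of $x_{ij}$ in Line 14, and the update of $q(t)$ in Line 15 are each $\mathcal{O}(1)$, with no feasibility subproblem to solve. Bounding the number of bids per user by $|\mathcal{S}|$ gives $\mathcal{O}(|\mathcal{I}(t)| |\mathcal{S}|)$ for the loop. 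As in \Lemma \ref{L2}, the resource-availability update over $[O_i, O_i + L_i - 1]$ is charged as constant per-user overhead, so it is absorbed into this term, which dominates the preprocessing cost.

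Combining the two phases yields $\mathcal{O}(|\mathcal{I}(t)| |\mathcal{S}|)$ work per time slot, and multiplying by the $|\Omega|$ iterations of the outer loop gives the claimed $\mathcal{O}(|\Omega| |\mathcal{S}| |\mathcal{I}(t)|)$. I expect the main obstacle to be bookkeeping rather than substance: specifically, justifying that removing the critical-index selection leaves the worst-case bound unchanged, and adopting the convention of \Lemma \ref{L2} that the availability write in the last line of the loop body (which in principle touches $\mathcal{O}(L_i)$ array entries) is treated as constant-cost, so that it does not introduce a separate $\sum_{i} L_i$ term. Provided this convention is inherited from the PAYG analysis, the bound follows immediately.
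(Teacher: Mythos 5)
Your argument is correct and follows essentially the same route as the paper, which defers the proof of \Lemma \ref{L7} to the supplementary material as an analogue of \Lemma \ref{L2}: bound the per-slot preprocessing by the sort cost, observe that it is dominated by the nested allocation loop costing $\mathcal{O}(|\mathcal{I}(t)||\mathcal{S}|)$, and multiply by $|\Omega|$. Your additional care about the absence of critical-index pruning, the constant-time inner iterations, and the convention for charging the availability update is consistent with (and slightly more explicit than) the paper's own treatment.
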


\begin{lemma}
\label{L8}
In each iteration of \Alg\ref{alg2}, $\Delta \mathcal{P}(i,j)=\left(1-\frac{1}{\underline{\alpha}}\right) \Delta \mathcal{D}(i,j)$  holds, where $\Delta \mathcal{D}(i,j)$ and $\Delta \mathcal{P}(i,j)$ denotes the change value in the objective function of the dual and primal problems, respectively. 
\end{lemma}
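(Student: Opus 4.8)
The plan is to mirror the primal--dual accounting used in \Lemma \ref{L3} for the PAYG algorithm, adapting it to the fractional allocation and per-bid iteration structure of \Alg \ref{alg2}. The primal objective of $OPT_1(t)$ increases only through the assignment of $x_{ij}$ at Line 14, so the change attributable to processing bid $j$ of user $i$ is simply $\Delta\mathcal{P}(i,j) = b_{ij}x_{ij}$. For the dual side, I would write $\Delta\mathcal{D}(i,j) = A_t\,\Delta q(t) + \Delta u_i$, where $\Delta q(t)$ is read off the update rule at Line 15 and $\Delta u_i = b_{ij} - \underline{Q}_i q(t)$ is the contribution of user $i$'s dual variable, exactly analogous to the assignment $u_i = b_{ij^*} - \overline{Q}_i q(t)$ used in the proof of \Lemma \ref{L3}. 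Here $q(t)$ denotes the value of the dual variable \emph{before} the Line 15 update.

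First I would substitute the Line 15 update,
$$\Delta q(t) = q(t)\frac{\underline{Q}_i}{A_t} + \frac{b_{ij}x_{ij}}{(\underline{\alpha}_t-1)A_t} - \frac{(1-x_{ij})b_{ij}}{A_t},$$
into $A_t\,\Delta q(t)$, obtaining $\underline{Q}_i q(t) + \frac{b_{ij}x_{ij}}{\underline{\alpha}_t-1} - (1-x_{ij})b_{ij}$. Adding $\Delta u_i = b_{ij}-\underline{Q}_i q(t)$ cancels the $\underline{Q}_i q(t)$ terms and leaves $\frac{b_{ij}x_{ij}}{\underline{\alpha}_t-1} + x_{ij}b_{ij} = b_{ij}x_{ij}\left(\frac{1}{\underline{\alpha}_t-1}+1\right)$. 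Since $\Delta\mathcal{P}(i,j)=b_{ij}x_{ij}$, this gives $\Delta\mathcal{D}(i,j) = \frac{\underline{\alpha}_t}{\underline{\alpha}_t-1}\,\Delta\mathcal{P}(i,j)$, and inverting the factor $\frac{1}{\underline{\alpha}_t-1}+1 = \frac{\underline{\alpha}_t}{\underline{\alpha}_t-1}$ yields the claimed identity $\Delta\mathcal{P}(i,j) = \left(1-\frac{1}{\underline{\alpha}_t}\right)\Delta\mathcal{D}(i,j)$.

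The step I expect to require the most care is the correct treatment of the extra term $-\tfrac{(1-x_{ij})b_{ij}}{A_t}$ that appears in the PAAP update rule but not in its PAYG counterpart. This term is precisely what compensates for the fractional value $x_{ij}\in[0,1]$: in PAYG one always has the accepted bid set to $x_{ij^*}=1$, so $1-x_{ij^*}=0$ and the term vanishes, whereas in PAAP it is needed to keep the per-bid primal--dual ratio constant. I would verify that its sign combines correctly with $\Delta u_i$ so that the $b_{ij}$ (rather than $b_{ij}x_{ij}$) contributions telescope into $x_{ij}b_{ij}$, and confirm that $q(t)$ is consistently taken as the pre-update value in both $\Delta q(t)$ and $\Delta u_i$, matching the convention of \Lemma \ref{L3}. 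Notably, the explicit formula for $x_{ij}$ at Line 14 is not needed here: the identity holds for any admissible $x_{ij}$, a point I would remark on since that particular value only matters for the subsequent competitive-ratio analysis.
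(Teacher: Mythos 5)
Your proposal is correct and follows essentially the same route the paper takes: the paper defers the proof of \Lemma\ref{L8} to its supplementary material precisely because it is the analog of the proof of \Lemma\ref{L3}, and your adaptation --- taking $\Delta\mathcal{P}(i,j)=b_{ij}x_{ij}$, $\Delta u_i=b_{ij}-\underline{Q}_i q(t)$ with $q(t)$ the pre-update value, and substituting the Line~15 update so that the extra term $-\tfrac{(1-x_{ij})b_{ij}}{A_t}$ converts the $b_{ij}$ contribution into $x_{ij}b_{ij}$ --- is exactly that analog, with the algebra checking out to give $\Delta\mathcal{D}(i,j)=\tfrac{\underline{\alpha}_t}{\underline{\alpha}_t-1}\,\Delta\mathcal{P}(i,j)$. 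Your closing observation that the identity holds for any admissible $x_{ij}\in[0,1]$, independent of the specific Line~14 formula, is a correct and worthwhile remark.
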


\begin{lemma}
\label{L9}
\Alg \ref{alg2} constructs feasible solutions for both the primal and dual online problems.
\end{lemma}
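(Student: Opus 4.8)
The plan is to mirror the two-part structure of the proof of \Lemma \ref{L4}, establishing dual feasibility first and primal feasibility second, while accounting for the structural differences of \Alg \ref{alg2}: fractional allocations $x_{ij}\in(0,1]$, the min-based parameters $\underline{Q}_i$, $\underline{R}_t$, $\underline{\alpha}_t$, and the correction term $-\tfrac{(1-x_{ij})b_{ij}}{A_t}$ appearing in the dual-variable update (Line 15).

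For dual feasibility, I would set $u_i = b_{ij}-\underline{Q}_i\,q(t)$ in analogy with the PAYG analysis and verify the three dual requirements. Non-negativity $q(t)\ge0$ follows from the initialization $q(t)\leftarrow0$ together with monotonicity of the update: the multiplicative factor $(1+\underline{Q}_i/A_t)\ge1$ and the positive increment $\tfrac{b_{ij}x_{ij}}{(\underline{\alpha}_t-1)A_t}$ push $q(t)$ upward, and I would show the negative correction term cannot drive $q(t)$ below zero by invoking the acceptance condition $q(t)\le b_{ij}/Q_{ij}$ under which the update is applied. The dual constraint $Q_{i,s}q(t)+u_i\ge b_{i,s}$ is then handled by the same case split as in \Lemma \ref{L4}: on the rejection branch $q(t)>b_{ij}/Q_{ij}$ the constraint holds with $u_i=0$, while on the acceptance branch the chosen $u_i$ saturates it.

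For primal feasibility, non-negativity $\chi_{i,s}=x_{ij}\ge0$ is immediate since $x_{ij}=\tfrac{b_{ij}\sum_{j}Q_{ij}}{Q_{ij}\sum_j b_{ij}}$ is a quotient of strictly positive quantities; the assignment constraint $\sum_{s\in\mathcal{S}_i}\chi_{i,s}\le1$ I would check directly from this closed form. The crux is the capacity constraint $\sum_i\sum_s Q_{i,s}\chi_{i,s}\le A_t$, which I would establish by the geometric-series argument used in \eqref{22}--\eqref{25}: the recursion in Line 15 bounds $q(t)$ below by a geometric sum with common ratio $1+\underline{Q}_i/A_t$, yielding, after substituting $\underline{\alpha}_t=(1+\underline{R}_t)^{1/\underline{R}_t}$, a lower bound of the form $q(t)\ge \tfrac{b_{mj}}{\underline{Q}_m}\cdot\tfrac{1}{\underline{\alpha}_t-1}\bigl(\underline{\alpha}_t^{\,\sum Q_{ij}x_{ij}/A_t}-1\bigr)$. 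Once the committed resources reach $A_t$ this forces $q(t)\ge b_{mj}/\underline{Q}_m$, after which the acceptance test fails and no further resources are allocated, so capacity is never exceeded.

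The main obstacle will be the convexity step inside this bound. Because $\underline{R}_t=\min_i\{\underline{Q}_i/A_t\}$ is a minimum rather than a maximum, the inequality $\tfrac{\ln(1+\underline{Q}_i/A_t)}{\underline{Q}_i/A_t}\le\tfrac{\ln(1+\underline{R}_t)}{\underline{R}_t}$ runs in the opposite direction to \eqref{24}, so the monotonicity of $x\mapsto\ln(1+x)/x$ must be applied on the correct side and the entire chain of bounds re-derived. Combined with the fractional weights $x_{ij}$ and the correction term $-\tfrac{(1-x_{ij})b_{ij}}{A_t}$, the delicate bookkeeping is to track the constants through the geometric sum so that the final exponent is exactly $\sum Q_{ij}x_{ij}/A_t$. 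These details are analogous to, but not identical with, \Lemma \ref{L4}, which is why the full argument is deferred to the supplementary material.
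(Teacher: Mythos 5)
Your proposal correctly identifies the right template (mirror the proof of \Lemma \ref{L4}) and correctly flags the two places where \Alg \ref{alg2} genuinely differs --- the min-based parameters reversing the direction of the $\ln(1+x)/x$ inequality, and the correction term in the $q(t)$ update --- but it does not close either of them, and two steps you treat as routine would fail if carried out as described. First, the assignment constraint: from the closed form $x_{ij}=\frac{b_{ij}\sum_{j'}Q_{ij'}}{Q_{ij}\sum_{j'}b_{ij'}}$ one gets $\sum_{j}x_{ij}=\frac{\sum_{j'}Q_{ij'}}{\sum_{j'}b_{ij'}}\sum_{j}\frac{b_{ij}}{Q_{ij}}$, which is on the order of $|\mathcal{J}_i|$ rather than $1$ (if all of user $i$'s bids share the same unit price $b_{ij}/Q_{ij}$, every accepted bid receives $x_{ij}=1$), so the ``direct check'' of $\sum_{s\in\mathcal{S}_i}\chi_{i,s}\le 1$ does not go through; any valid argument must either show that the acceptance test eliminates enough bids or renormalize the allocation. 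Second, non-negativity of $q(t)$: the acceptance condition $q(t)\le b_{ij}/Q_{ij}$ constrains the \emph{pre}-update value, not the post-update one. Starting from $q(t)=0$, a single update with $x_{ij}<\frac{\underline{\alpha}_t-1}{\underline{\alpha}_t}$ yields $q(t)=\frac{b_{ij}}{A_t}\bigl[\frac{x_{ij}}{\underline{\alpha}_t-1}-(1-x_{ij})\bigr]<0$, so invoking the acceptance condition alone cannot establish $q(t)\ge 0$; some additional property of the chosen $x_{ij}$ is needed.

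The step you yourself call the crux --- re-deriving the geometric-sum lower bound on $q(t)$ when the convexity inequality now reads $1+\underline{Q}_i/A_t\le\underline{\alpha}_t^{\underline{Q}_i/A_t}$ and therefore points the wrong way for lower-bounding the product of update factors --- is left entirely open: you state that the chain of bounds ``must be re-derived'' and then defer, which is not a proof of the capacity constraint. For reference, the paper itself does not prove \Lemma \ref{L9} in the main text either; it states only that Lemmas \ref{L7}--\ref{L9} are analogous to Lemmas \ref{L2}--\ref{L4} and places the proofs in Supplementary material B, so your plan is in the same spirit as the paper's, but as written the capacity bound, the sum-to-one constraint, and $q(t)\ge 0$ all remain unestablished, and your proposed routes for the latter two would fail.
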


We next analyze the performance of the proposed primal-dual online algorithms in terms of solution quality.

\subsection{Upper bound on the competitive ratios}
\label{PAYGcr}
To evaluate the performance of the proposed online algorithms (\Alg \ref{alg1} and \Alg \ref{alg2}), we formulate offline resource allocation problems which take as input the complete travel demand data over the entire optimization period. The offline mobility resource allocation problems for the PAYG and the PAAP mechanisms are summarized in \textcolor{Cerulean}{Model 2.1} and \textcolor{Cerulean}{Model 2.2}, respectively.\\

\noindent \textbf{Model 2.1} (PAYG offline mobility resource allocation).
\begin{subequations}
\label{mod2}
\allowdisplaybreaks
\begin{align}
&\max\sum_{i \in \mathcal{I}} \sum_{j \in \mathcal{J}_{i}}\sum_{t \in \Omega:t\leq O_{i}} b_{ij} x_{i j}^{t},\label{5a}\\
&\text{subject to:} \nonumber \\
&\sum_{m \in \mathcal{M}}  v_{m} l_{i j}^{mt} =D_{i} x_{i j}^{t}, &&\forall i \in  \mathcal{I},j \in \mathcal{J}_{i},t \in \Omega:t\leq O_{i},\label{5b}\\
&0\leq\sum_{m \in \mathcal{M}} l_{i j}^{mt}-T_{ij}x_{i j}^{t}\leq \Phi_{i}, &&\forall i \in  \mathcal{I},j \in \mathcal{J}_{i},t \in \Omega:t\leq O_{i},\label{5c}\\
&\sum_{m \in \mathcal{M}}\sigma_{m} l_{i j}^{mt} \leq \Gamma_{i}, &&\forall i \in  \mathcal{I},j \in \mathcal{J}_{i},t \in \Omega:t\leq O_{i}, \label{5d}\\
&x_{i j}^{t}\left(b_{ij}-p_{i j}^{t}\right) \geq 0, &&\forall i \in \mathcal{I}, j \in \mathcal{J}_{i},t \in \Omega:t\leq O_{i},\label{5e}\\
&p_{t}=\frac{b_{\max}}{C} \left(\sum_{i \in \mathcal{I}}\sum_{j \in \mathcal{J}_{i}}Q_{ij}x_{ij}^{t}\right)+b_{\min}, &&\forall t \in \Omega,\label{5f}\\
&p_{i j}^{t}=Q_{ij}p_{t},&&\forall i \in  \mathcal{I}, j \in \mathcal{J}_{i},t \in \Omega:t\leq O_{i},\label{5g}\\
&\sum_{i \in \mathcal{I}}\sum_{j \in \mathcal{J}_{i}} Q_{ij}x_{ij}^{t}\leq A_{t},&&\forall t \in \Omega,\label{5h}\\
&\sum_{j \in \mathcal{J}_{i}}\sum_{t \in \Omega:t\leq O_{i}} x_{i j}^{t} \leq 1,&& \forall i \in  \mathcal{I},\label{5i}\\
&l_{ij}^{mt} \geq 0, && \forall i \in  \mathcal{I}, j \in \mathcal{J}_{i}, m \in \mathcal{M},t \in \Omega:t\leq O_{i},\label{5j}\\
&b_{\min}\leq p_{t}\leq b_{\min}+b_{\max},&&\forall t \in \Omega,\label{5k}\\
&p_{i j}^{t}\geq 0,&&\forall i \in  \mathcal{I}, j \in \mathcal{J}_{i},t \in \Omega:t\leq O_{i},\label{5l}\\
&x_{i j}^{t}\in\{0,1\},&&\forall i \in  \mathcal{I}, j \in \mathcal{J}_{i},t \in \Omega:t\leq O_{i}.\label{5m}
\end{align}
\label{OFFPAYG}
\end{subequations}

\textcolor{Cerulean}{Model 2.1} can be viewed as a time-extended version of \textcolor{Cerulean}{Model 1.1}, where variable $x_{ij}^{t}$ indicates whether user $i$'s bid $j$ will be allocated at time slot $t$ or not, variable $l_{ij}^{mt}$ represents the number of time slots served by mode $m$ in the MaaS bundle customized for user $i$'s bid $j$, $p_{t}$ denotes the unit price at time slot $t$, and $p_{ij}^{t}$ denotes user $i$'s actual payment for bid $j$ at time slot $t$. The objective function \eqref{5a} aims to maximize social welfare overall all time slots. Constraints \eqref{5b}, \eqref{5c} and \eqref{5d} guarantee that the MaaS bundle can satisfy the user's requested distance requirement, travel time requirement and inconvenience tolerance, respectively. Constraint (\ref{5e}) illustrates that when the bidding price is smaller than the actual payment, the bid will be rejected. Constraint (\ref{5f}) illustrates that the unit price grows with the increasing quantity of allocated mobility resources at different time slot, and varies within $[b_{\min},b_{\min}+b_{\max}]$. Constraint (\ref{5g}) illustrates that the actual payment at time slot $t$ is determined by the unit price at time slot $t$ and the requested weighted quantity  of mobility resources. Constraint \eqref{5h} restricts the weighted quantity  of requested mobility resources at time slot $t$ can not exceed the weighted quantity  of available mobility resources at time slot $t$. Constraint \eqref{5i} guarantees at most one of user $i$'s multi-bids can be accepted at  time slot $t$. Observe that if $|\Omega|=1$, \textcolor{Cerulean}{Model 2.1} is equivalent to \textcolor{Cerulean}{Model 1.1}.\\

\noindent \textbf{Model 2.2} (PAAP offline mobility resource allocation).
\label{mod4}
\begin{subequations}
\allowdisplaybreaks
\begin{align}
&\max\sum_{i \in \mathcal{I}} \sum_{j \in \mathcal{J}_{i}}\sum_{t \in \Omega:t\leq O_{i}}  b_{i j} x_{ij}^{t},\label{11a}\\
&\text{subject to:}   \nonumber \\
&\text{\eqref{5b}-\eqref{5l}}, \nonumber\\
& 0 \leq x_{i j}^{t}\leq 1,&& \forall i \in \mathcal{I}, j \in \mathcal{J}_{i},t\in \Omega:t\leq O_{i}.\label{11k}
\end{align}
\end{subequations}
\textcolor{Cerulean}{Model 2.2} is analogous to \textcolor{Cerulean}{Model 1.2}, and if $|\Omega|=1$, \textcolor{Cerulean}{Model 2.2} is equivalent to \textcolor{Cerulean}{Model 1.2}. Similarly to the online resource allocation problems, the offline formulations represented by (\textcolor{Cerulean}{Model 2.1} and \textcolor{Cerulean}{Model 2.2}) can be reformulated as compact IP and LP, respectively.

\begin{definition}
For any user $i \in \mathcal{I}(t)$ and user bid $j \in \mathcal{J}_{i}$, let $\bm{l}_{ij}^{t} = [l_{ij}^{mt}]_{m \in \mathcal{M}}$. Let $\mathcal{S}_{ij}^{t}$ be the set defined as:
\begin{equation}
\mathcal{S}_{ij}^{t} \triangleq \left\{\bm{l}_{ij}^{t} \in \mathbb{R}^{|\mathcal{M}|} : \eqref{5b}-\eqref{5g}, b_{ij} \geq p_{ij}^{t}, x_{ij}^t=1\right\}. 
\end{equation}
We say that $\mathcal{S}_{ij}^{t}$ is the set of feasible MaaS bundles corresponding to bid $j$ for user $i$ at time slot $t \in \Omega$. Further, for each user $i \in \mathcal{I}(t)$, we define $\mathcal{S}_{i}^{t} \triangleq \bigcup_{j \in \mathcal{J}_{i}} \mathcal{S}_{ij}$ as the set of feasible MaaS bundles for user $i$ at time slot $t \in \Omega$. 
\end{definition}

\begin{lemma}
\label{L5}
For each user $i \in \mathcal{I}(t)$ and for each MaaS bundle $s \in \mathcal{S}_{i}^{t},\forall t \in \Omega$, let $\chi_{i,s}^{t}$ be a binary variable representing the allocation of $s$ to $i$ at time slot $t \in \Omega$. Consider the compact IP: 
\begin{subequations}
\label{CIP}
\allowdisplaybreaks
\begin{align}
&\max \sum_{i \in \mathcal{I}}\sum_{s \in \mathcal{S}_{i}^{t}} \sum_{t \in \Omega} b_{i ,s}\chi_{i,s}^{t}, \label{7a}\\
&\text{subject to:}  && \nonumber \\
&\sum_{i \in \mathcal{I}}\sum_{s \in \mathcal{S}_{i}^{t}}  Q_{i,s}\chi_{i,s}^{t} \leq A_{t},&& \forall t \in \Omega,\label{7b}\\
&\sum_{s \in \mathcal{S}_{i}^{t}}\sum_{t \in \Omega:t\leq O_{i}} \chi_{i,s}^{t} \leq 1, &&\forall i \in \mathcal{I},\label{7c}\\
& \chi_{i,s}^{t}\in \left\{0,1\right\},\ && \forall i \in \mathcal{I},s \in \mathcal{S}_{i}^{t},t \in \Omega:t\leq O_{i}. \label{7d}
\end{align}
\end{subequations}
\noindent The compact IP \eqref{CIP} is equivalent to \emph{\textcolor{Cerulean}{Model 2.1}}. Further, the LP-relaxation of IP \eqref{CIP} is equivalent to \emph{\textcolor{Cerulean}{Model 2.2}}.
\end{lemma}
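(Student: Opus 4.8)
The plan is to mirror the proof of \Lemma\ref{L1}, extending the single-slot correspondence to the full optimization horizon. I would establish a one-to-one correspondence between feasible solutions of \textcolor{Cerulean}{Model 2.1} and feasible solutions of the compact IP \eqref{CIP}, constructed slot by slot while tracking the global ``at most one accepted bid per user'' requirement.

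First, starting from a feasible solution $[(x_{ij}^{t},\bm{l}_{ij}^{t})]$ of \textcolor{Cerulean}{Model 2.1}, I would observe that for every triple $(i,j,t)$ with $x_{ij}^{t}=1$ the vector $\bm{l}_{ij}^{t}$ satisfies \eqref{5b}--\eqref{5g} together with the acceptance condition $b_{ij}\geq p_{ij}^{t}$; by the definition of $\mathcal{S}_{ij}^{t}$ this vector is therefore an element of $\mathcal{S}_{ij}^{t}\subseteq\mathcal{S}_{i}^{t}$, so I can set the corresponding $\chi_{i,s}^{t}=1$. Conversely, any bundle assignment $\chi_{i,s}^{t}=1$ identifies a unique bid $j$ with $s\in\mathcal{S}_{ij}^{t}$ and recovers the mode--time allocation $\bm{l}_{ij}^{t}=s$, which satisfies \eqref{5b}--\eqref{5g} by construction. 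Since $b_{i,s}=b_{ij}$ whenever $s\in\mathcal{S}_{ij}^{t}$, the objective \eqref{7a} coincides with \eqref{5a} under this correspondence.

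Next I would verify that the constraints map onto one another. The per-slot resource constraint \eqref{7b} is the bundle-level restatement of \eqref{5h}, because $Q_{i,s}=Q_{ij}$ for $s\in\mathcal{S}_{ij}^{t}$; and the coupling assignment constraint \eqref{7c}, which aggregates over both bundles $s\in\mathcal{S}_{i}^{t}$ and admissible slots $t\leq O_{i}$, is precisely the ``at most one bid accepted over the horizon'' requirement \eqref{5i}. This establishes the equivalence of \eqref{CIP} and \textcolor{Cerulean}{Model 2.1}. As in \Lemma\ref{L1}, relaxing the integrality of $\chi_{i,s}^{t}$ to $0\leq\chi_{i,s}^{t}\leq 1$ then corresponds exactly to relaxing $x_{ij}^{t}\in\{0,1\}$ to \eqref{11k}, so the LP-relaxation of \eqref{CIP} is equivalent to \textcolor{Cerulean}{Model 2.2}.

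The step I expect to require the most care is the treatment of the pricing constraints \eqref{5f}--\eqref{5g}, which are embedded in the definition of $\mathcal{S}_{ij}^{t}$ but couple all users at a given slot through the common unit price $p_{t}$. Unlike the single-slot setting of \Lemma\ref{L1}, where $p_{ij}$ is fixed and treated as a constant, here the membership $\bm{l}_{ij}^{t}\in\mathcal{S}_{ij}^{t}$ depends on the price induced by the \emph{entire} allocation at slot $t$. I would therefore argue that, for any price schedule $\{p_{t}\}_{t\in\Omega}$ determined through \eqref{5f} by a given allocation, the bundle sets $\mathcal{S}_{ij}^{t}$ are well defined and the correspondence above holds consistently in both directions; the claimed equivalence is then understood relative to this induced price, exactly as $p_{ij}$ is held fixed within each slot in \Lemma\ref{L1}.
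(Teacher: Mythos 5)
Your proposal is correct and follows essentially the same route as the paper, which simply states that the proof follows from that of \Lemma\ref{L1}: you expand that single-slot correspondence argument to the time-indexed setting exactly as intended. Your additional care regarding the price coupling through \eqref{5f}--\eqref{5g} is a genuine refinement that the paper's one-line proof leaves implicit, and your resolution (interpreting the bundle sets relative to the price induced by the allocation at each slot) is the right way to make the equivalence precise.
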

\begin{proof} 
The proof follows from that of \Lemma \ref{L1}.
\end{proof}

We summarize the primal and dual problems of the offline compact LP as follows:

$$\begin{array}{l|ll}
\multicolumn{1}{l|} {\text { \textbf{Primal offline problem $OPT_{3}$ } }} & \multicolumn{1}{l} {\text { \textbf{Dual offline problem $OPT_{4}$}}} \\
\hline \max \sum_{i \in \mathcal{I}} \sum_{s \in \mathcal{S}_{i}^{t}}\sum_{t \in \Omega:t\leq O_{i}} b_{i,s}\chi_{i,s}^{t}, & \min \sum_{t \in \Omega:t\leq O_{i}}A_{t} q(t)+\sum_{i \in \mathcal{I}} u_{i},\\
\text { subject to: } & \text { subject to: } \\
\sum_{i \in \mathcal{I}}\sum_{s \in \mathcal{S}_{i}^{t}}  Q_{i,s} \chi_{i,s}^{t}  \leq A_{t}, \forall t \in \Omega,& Q_{i,s}q(t)+u_{i}\geq b_{i,s},\forall i \in \mathcal{I},s \in \mathcal{S}_{i}^{t},t \in \Omega:t\leq O_{i},\\
\sum_{s\in \mathcal{S}_{i}^{t}}\sum_{t \in \Omega:t\leq O_{i}}  \chi_{i,s}^{t}  \leq 1, \forall i \in \mathcal{I}, & q(t)\geq 0, \forall t \in \Omega,\\
\chi_{i,s}^{t}  \geq 0,\forall i \in \mathcal{I}, s \in \mathcal{S}_{i}^{t}, t \in \Omega:t\leq O_{i}. &u_{i}\geq 0, \forall i \in \mathcal{I}.
\end{array}$$

For any input sequence $\boldsymbol{\tau}$, let $\mathcal{Z}^{*}(\boldsymbol{\tau})$ denote the maximum value of the offline problem ($OPT_{3}$), if \Alg\ref{alg1} outputs a solution which is at least $\Theta\cdot\mathcal{Z}^{*}(\boldsymbol{\tau})$, then we say that its competitive ratio is $\Theta$ \citep{borodin2005online}. 

\begin{prop}
\label{theorem2}
The competitive ratio of \Alg \ref{alg1} is $\Theta = \left(1-\mathbb{R}_{max}\right)\left(1-\frac{1}{\overline{\alpha}}\right)$, where $\overline{\alpha}=\min_{t \in \boldsymbol{\tau}}\overline{\alpha}_{t}$, $\mathbb{R}_{max}=\max_{t \in \boldsymbol{\tau}}\overline{R}_{t}$ , $\overline{\alpha}_{t}=(1+\overline{R}_{t})^{\frac{1}{\overline{R}_{t}}}$, $\overline{R}_{t}=\max_{i\in \mathcal{I}(t)}\left\{\frac{\overline{Q}_{i}}{A_{t}}\right\}$, $\forall t\in \Omega$, $\overline{Q}_{i}= \max_{j\in\mathcal{J}_{i}} \left\{Q_{ij}\right\}, \forall i\in \mathcal{I}(t)$.
\end{prop}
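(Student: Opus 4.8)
The plan is to run the standard online primal--dual argument: lower-bound the primal objective built by \Alg \ref{alg1} by a fixed fraction of the dual objective it simultaneously constructs, and upper-bound the offline optimum by that same dual objective through weak duality. Write $\mathcal{P}_{ALG}$ and $\mathcal{D}_{ALG}$ for the terminal values of the primal and dual objectives produced by \Alg \ref{alg1}, and recall that $\mathcal{Z}^{*}(\boldsymbol{\tau})$ denotes the optimum of the offline problem $OPT_{3}$. The target inequality $\mathcal{P}_{ALG}\geq\Theta\,\mathcal{Z}^{*}(\boldsymbol{\tau})$ will be assembled from two bounds, one supplying the factor $\bigl(1-\tfrac{1}{\overline{\alpha}}\bigr)$ and one supplying the factor $(1-\mathbb{R}_{max})$.

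First I would accumulate the per-iteration identity of \Lemma \ref{L3}. Since $\Delta\mathcal{P}(i)=\bigl(1-\tfrac{1}{\overline{\alpha}_{t}}\bigr)\Delta\mathcal{D}(i)$ at every acceptance, and since the dual objective starts at zero with $q(t)$ only ever increasing (Lines 12 and 18) and each $u_{i}$ set once upon acceptance, summing over all accepted users and all time slots telescopes to $\mathcal{P}_{ALG}=\sum_{t}\sum_{i}\bigl(1-\tfrac{1}{\overline{\alpha}_{t}}\bigr)\Delta\mathcal{D}(i)$, in which $\mathcal{D}_{ALG}=\sum_{t}A_{t}q(t)+\sum_{i}u_{i}$ recovers precisely the offline dual objective $OPT_{4}$. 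Bounding every coefficient by the worst one, $\overline{\alpha}=\min_{t\in\boldsymbol{\tau}}\overline{\alpha}_{t}$, then gives $\mathcal{P}_{ALG}\geq\bigl(1-\tfrac{1}{\overline{\alpha}}\bigr)\mathcal{D}_{ALG}$.

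Second I would invoke weak duality between $OPT_{3}$ and $OPT_{4}$, whereby any dual-feasible point upper-bounds $\mathcal{Z}^{*}(\boldsymbol{\tau})$. By \Lemma \ref{L4} the terminal $(q(t),u_{i})$ satisfy the online dual constraints, and \eqref{25} shows that once capacity is consumed the price $q(t)$ dominates every remaining unit bid; the only residual obstruction to offline dual feasibility is therefore the critical-index truncation of Lines 7--8. Because $k$ obeys $\sum_{i=1}^{k-1}\overline{Q}_{i}\leq A_{t}<\sum_{i=1}^{k}\overline{Q}_{i}$, the candidate set $\bar{\mathcal{I}}_{k}(t)$ omits at most one critical user whose demand satisfies $\overline{Q}_{k}\leq\overline{R}_{t}A_{t}\leq\mathbb{R}_{max}A_{t}$, so at each slot the neglected resource mass is at most an $\mathbb{R}_{max}$ fraction of capacity. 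Quantifying this, I would establish $\mathcal{Z}^{*}(\boldsymbol{\tau})\leq\tfrac{1}{1-\mathbb{R}_{max}}\mathcal{D}_{ALG}$, equivalently $\mathcal{D}_{ALG}\geq(1-\mathbb{R}_{max})\mathcal{Z}^{*}(\boldsymbol{\tau})$, either by rescaling the algorithm's dual into an $OPT_{4}$-feasible point or by a direct value comparison showing the truncation forfeits at most the dropped critical user's contribution per slot.

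Chaining the two bounds yields $\mathcal{P}_{ALG}\geq\bigl(1-\tfrac{1}{\overline{\alpha}}\bigr)\mathcal{D}_{ALG}\geq(1-\mathbb{R}_{max})\bigl(1-\tfrac{1}{\overline{\alpha}}\bigr)\mathcal{Z}^{*}(\boldsymbol{\tau})=\Theta\,\mathcal{Z}^{*}(\boldsymbol{\tau})$, which is the claim. I expect the second bound to be the main obstacle: the offline dual couples time slots through the single-acceptance constraint and permits each user to be served at any $t\leq O_{i}$, so the online prices $q(t)$---formed against the online, progressively updated availabilities---need not be feasible for $OPT_{4}$ as they stand. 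The delicate part is to control this discrepancy and to pin the loss down to exactly the $(1-\mathbb{R}_{max})$ factor attributable to the single dropped critical user at each slot; by contrast, the telescoping of \Lemma \ref{L3}, the weak-duality step, and the substitution of $\overline{\alpha}$ and $\mathbb{R}_{max}$ are routine.
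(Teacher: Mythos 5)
Your skeleton is the same online primal--dual argument the paper uses, and your first bound is identical to the paper's: summing the per-acceptance identity of \Lemma \ref{L3} over all iterations and replacing each $\overline{\alpha}_{t}$ by $\overline{\alpha}=\min_{t\in\boldsymbol{\tau}}\overline{\alpha}_{t}$ gives $\mathcal{P}_{ALG}\geq\bigl(1-\tfrac{1}{\overline{\alpha}}\bigr)\mathcal{D}_{ALG}$, which is precisely \eq\eqref{34}--\eqref{eq32}. Where you diverge is in how the factor $\left(1-\mathbb{R}_{max}\right)$ enters. You charge it to the dual side, claiming $\mathcal{D}_{ALG}\geq\left(1-\mathbb{R}_{max}\right)\mathcal{Z}^{*}(\boldsymbol{\tau})$ on the grounds that the critical-index truncation makes the online prices only approximately feasible for the offline dual $OPT_{4}$. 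The paper instead keeps weak duality clean --- $\mathcal{D}(\boldsymbol{\tau})\geq\mathcal{Z}^{*}(\boldsymbol{\tau})$, resting on the dual feasibility established in \Lemma \ref{L4} --- and extracts $\left(1-\mathbb{R}_{max}\right)$ on the primal/welfare side: from the price-growth bound \eq\eqref{25} it deduces in \eq\eqref{32}--\eqref{ee35} that by the time acceptance stops at slot $t$ the allocation satisfies $\sum_{i}\sum_{j}\overline{Q}_{i}x_{ij}\geq A_{t}-\max_{i}\overline{Q}_{i}$, so at most one critical user's demand, an $\overline{R}_{t}\leq\mathbb{R}_{max}$ fraction of capacity, is forfeited per slot, yielding $W_{\text{Alg1}}(\boldsymbol{\tau})\geq\left(1-\mathbb{R}_{max}\right)\mathcal{P}(\boldsymbol{\tau})$ and then the chain \eq\eqref{ee37}--\eqref{ee39}.

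The difference is not merely bookkeeping: as written, your proof has a gap exactly where you predicted. The inequality $\mathcal{D}_{ALG}\geq\left(1-\mathbb{R}_{max}\right)\mathcal{Z}^{*}(\boldsymbol{\tau})$ is asserted with two candidate strategies but never derived, and placing the loss there is genuinely harder than the paper's placement, because the offline dual couples time slots --- each user's constraint $Q_{i,s}q(t)+u_{i}\geq b_{i,s}$ must hold at every $t\leq O_{i}$ against availabilities $A_{t}$ that the online algorithm updates as it goes, so a uniform rescaling by $\tfrac{1}{1-\mathbb{R}_{max}}$ does not obviously restore feasibility. If you redirect your own critical-index observation (that the dropped user has $\overline{Q}_{k}\leq\mathbb{R}_{max}A_{t}$) to the primal side, comparing the welfare collected against the capacity actually consumed as in \eq\eqref{ee35}, the difficulty disappears and the two remaining steps --- the \Lemma \ref{L3} telescoping and plain weak duality --- close the argument exactly as in the paper.
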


\begin{proof}
Let $k$ be the critical index determined at Line 6 of \Alg\ref{alg1}. Let ${q}(t)^{\mathrm{end}}$ and ${q}(t)^{\mathrm{start}}$ denote the value of $q(t)$ before and after each iteration in the loop of $i=k$ in \Alg\ref{alg1}, respectively. Substituting ${q}(t)^{\mathrm{end}}$ and ${q}(t)^{\mathrm{start}}$ into Line 16 of \Alg \ref{alg1}, yields \eq\eqref{28ee}:
\begin{equation}
\label{28ee}
q(t)^{\text{end}}= q(t)^{\text{start}}\left(1+\frac{\overline{Q}_{k}}{A_{t}}\right)+\frac{b_{kj}}{(\overline{\alpha}_{t}-1) A_{t}}.\\ 
\end{equation}

According to \ineq(\ref{25}), before examining user $k$'s bids, the value of ${q}(t)^{\mathrm{start}}$ is bounded as:
\begin{equation}
\label{ee29}
q(t)^{\text{start}}\geq \frac{b_{kj}}{\overline{Q}_{k}}\cdot\frac{1}{\overline{\alpha}_{t}-1}\cdot\left(\overline{\alpha}_{t}^{\frac{\sum_{i \in \mathcal{I}(t) \backslash\{k\}}\sum_{j \in \mathcal{J}_{i}} \overline{Q}_{i}x_{ij}}{A_{t}}}-1\right).
\end{equation}

Substituting \ineq \eqref{ee29} into \ineq \eqref{28ee}, and simplifying yields:
\begin{align}
\label{30ee}
q(t)^{\text{end}} 
&\geq \frac{b_{i,k}}{\overline{Q}_{k}}\cdot\frac{1}{\alpha_{t}-1}\cdot\left(\overline{\alpha}_{t}^{\frac{\sum_{i \in \mathcal{I}(t) \backslash\{k\}}\sum_{j\in \mathcal{J}_{i}} \overline{Q}_{i} x_{ij}}{A_{t}}}-1\right) \cdot\left(1+\frac{\overline{Q}_{k}}{A_{t}}\right)+\frac{b_{kj}}{(\overline{\alpha}_{t}-1) \cdot A_{t}},\\
&\geq\frac{b_{i,k}}{\overline{Q}_{k}}\cdot\frac{1}{\overline{\alpha}_{t}-1}\cdot\left[\overline{\alpha}_{t}^{\frac{\sum_{i \in \mathcal{I}(t) \backslash\{k\}}\sum_{j \in \mathcal{J}_{i}} \overline{Q}_{i} x_{ij}}{A_{t}}}\cdot\left(1+\frac{\overline{Q}_{k}}{A_{t}}\right)-1\right]. \label{29}
\end{align}

According to \ineq (\ref{24}), we have $1+\frac{\overline{Q}_{k}}{A_{t}}\geq (1+\overline{R}_{t})^{\frac{1}{\overline{R}_{t}}\cdot{\frac{\overline{Q}_{k}}{A_{t}}}}$, thus \ineq (\ref{29}) can be rewritten as:
\begin{equation}
\begin{aligned}
q(t)^{\text{end}} \geq \frac{b_{kj}}{\overline{Q}_{k}}\cdot\frac{1}{\overline{\alpha}_{t}-1}\cdot\left[\overline{\alpha}_{t}^{\frac{\sum_{i \in \mathcal{I}(t) \backslash\{k\}}\sum_{j \in \mathcal{J}_{i}} \overline{Q}_{i} x_{ij}}{A_{t}}}\cdot (1+\overline{R}_{t})^{\frac{1}{\overline{R}_{t}}\cdot{\frac{\overline{Q}_{k}}{A_{t}}}} -1\right].\label{30}
\end{aligned}
\end{equation}

Assume that user $k$ is accepted at time slot $t$, then $\sum_{i \in \mathcal{I}(t) \backslash\{k\}}\sum_{j \in \mathcal{J}_{i}} \overline{Q}_{i} x_{ij}+\overline{Q}_{k}x_{kj}=\sum_{i \in \mathcal{I}(t)}\sum_{j \in \mathcal{J}_{i}} \overline{Q}_{i} x_{ij}$, and since $\overline{\alpha}_{t}=(1+\overline{R}_{t})^{\frac{1}{\overline{R}_{t}}}$ \ineq (\ref{30}) can be written as:
\begin{equation}
\begin{aligned}
q(t)^{\text{end}}&\geq\frac{b_{kj}}{\overline{Q}_{k}}\cdot\frac{1}{\overline{\alpha}_{t}-1}\cdot\left(\overline{\alpha}_{t}^{\frac{\sum_{i \in \mathcal{I}(t) \backslash\{k\}}\sum_{j \in \mathcal{J}_{i}} \overline{Q}_{i} x_{ij}}{A_{t}}}\cdot \overline{\alpha}_{t}^{\frac{\overline{Q}_{k}}{A_{t}}} -1\right),\\
\end{aligned}\label{31}
\end{equation}
\begin{equation}
\begin{aligned}
&=\frac{b_{kj}}{\overline{Q}_{k}}\cdot\frac{1}{\alpha_{t}-1}\cdot\left(\overline{\alpha}_{t}^{\frac{\sum_{i \in \mathcal{I}(t)}\sum_{j\in \mathcal{J}_{i}} \overline{Q}_{i} x_{ij}}{A_{t}}} -1\right).\label{32}
\end{aligned}
\end{equation}

\eq \eqref{32} illustrates that if the requested quantity of mobility resources exceeds the available mobility resources at time slot $t$, $\sum_{i \in \mathcal{I}(t)}\sum_{j\in \mathcal{J}_{i}} \overline{Q}_{i} x_{ij} > A_{t}$, then user $k$ will not be allocated with any resources $q^{\text{end}}\geq \frac{b_{kj}}{\overline{Q}_{k}}$, ; thus  $\sum_{i \in \mathcal{I}(t)}\sum_{j \in \mathcal{J}_{i}}\overline{Q}_{i}x_{ij}\geq A_{t}-\max_{i\in\mathcal{I}(t)}\{\overline{Q}_{i}\}$, $\forall t \in \Omega$. Since $\overline{R}_{t}=\max_{i\in \mathcal{I}(t)}\left\{\frac{\overline{Q}_{i}}{A_{t}}\right\},\forall t \in \Omega$, the social welfare obtained by \Alg\ref{alg1} at time slot $t$ is at least:
\begin{equation}
\label{ee35}
\begin{aligned}
\sum_{i \in \mathcal{I}(t)}\sum_{j \in \mathcal{J}_{i}}b_{ij} x_{ij}\cdot \frac{A_{t}-\max_{i \in \mathcal{I}(t)}\{\overline{Q}_{i}\}}{A_{t}} &= \sum_{i \in \mathcal{I}(t)}\sum_{j \in \mathcal{J}_{i}} b_{ij} x_{ij}\cdot\left(1-\overline{R}_{t}\right).
\end{aligned}
\end{equation}

Baed on \Lemma\ref{L1}, we have $
\sum_{i \in \mathcal{I}(t)}\sum_{s \in \mathcal{S}_{i}}b_{i,s}\chi_{i,s}=\sum_{i \in \mathcal{I}(t)}\sum_{j \in \mathcal{J}_{i}}b_{ij}x_{ij}.$ According to \Lemma \ref{L3}, $\Delta \mathcal{D}(i)$ and $\Delta \mathcal{P}(i)$ denote the change value in the objective functions of dual problem $OPT_{2}(t)$ and primal problem $OPT_{1}(t)$, respectively, at each iteration of \Alg \ref{alg1}. Let $\mathcal{P}(t)$ and $\mathcal{D}(t)$ denote the objective value of the dual and primal problems obtained by \Alg\ref{alg1}. In time loop $t$, we have $\mathcal{D}(t)=\sum_{i \in \mathcal{I}(t)}\Delta \mathcal{D}(i)$  and $\mathcal{P}(t)=\sum_{i \in \mathcal{I}(t)}\Delta \mathcal{P}(i)$. Based on \Lemma \ref{L3}, the relationship between $\mathcal{D}(t)$ and $\mathcal{P}(t)$ is:
\begin{equation}
\mathcal{P}(t)=\left(1-\frac{1}{\overline{\alpha}_{t}}\right)\mathcal{D}(t).  \label{34} 
\end{equation}

Given the input time sequence $\boldsymbol{\tau}=[1,2,\cdots,t-1,t]$, let $\mathcal{P}(\boldsymbol{\tau})$ and $\mathcal{D}(\boldsymbol{\tau})$ denote the objective value of the primal $OPT_{3}$ and the dual problem $OPT_{4}$ obtained by \Alg\ref{alg1}. Since $\alpha=\min_{t \in \boldsymbol{\tau}}\alpha_{t}$, \eq \eqref{34} implies:
\begin{equation}
\mathcal{P}(\boldsymbol{\tau})\geq\left(1-\frac{1}{\overline{\alpha}}\right)\mathcal{D}(\boldsymbol{\tau}).
\label{eq32}
\end{equation}

Let $W_{\text{Alg1}}(\boldsymbol{\tau})$ denote the social welfare obtained by \Alg\ref{alg1} corresponding to the sequence $\boldsymbol{\tau}$. Since $\mathbb{R}_{max}=\max_{t \in \boldsymbol{\tau}}\overline{R}_{t}$, according to \eq\eqref{ee35}, we have $W_{\text{Alg1}}(\boldsymbol{\tau})\geq\left(1-\mathbb{R}_{max}\right)\mathcal{P}(\boldsymbol{\tau})$ and:
\begin{equation}
\label{ee37}
W_{\text{Alg1}}(\boldsymbol{\tau})\geq\left(1-\mathbb{R}_{max}\right)\left(1-\frac{1}{\overline{\alpha}_{t}}\right)\mathcal{D}(\boldsymbol{\tau}).
\end{equation}

Given the input sequence $\boldsymbol{\tau}$, let $\mathcal{Z}^{IP}(\boldsymbol{\tau})$ denote the optimal value of the offline compact IP \eqref{7a}-\eqref{7d}, and $\mathcal{Z}^{*}(\boldsymbol{\tau})$ denote the optimal value of the offline compact LP $OPT_{3}$. According to weak duality, we have $\mathcal{D}(\boldsymbol{\tau})\geq \mathcal{Z}^{*}(\boldsymbol{\tau})$, and \eq\eqref{ee37} can be rewritten as follows:
\begin{align}
\label{ee38}
W_{\text{Alg1}}(\boldsymbol{\tau})&\geq \left(1-\mathbb{R}_{max}\right)\left(1-\frac{1}{\overline{\alpha}_{t}}\right)\mathcal{Z}^{*}(\boldsymbol{\tau}),\\
&\geq \left(1-\mathbb{R}_{max}\right)\left(1-\frac{1}{\overline{\alpha}}\right)\mathcal{Z}^{IP}(\boldsymbol{\tau}).
\label{ee39}
\end{align}
Hence the competitive ratio of \Alg \ref{alg1} is $\Theta = \left(1-\mathbb{R}_{max}\right)\left(1-\frac{1}{\overline{\alpha}}\right)$.
\end{proof}


\begin{prop}
\label{theorem4}
The competitive ratio ($\Theta$) of \Alg\ref{alg2} is $1-\frac{1}{\underline{\alpha}}$, in which $\underline{\alpha}=\min_{t \in \boldsymbol{\tau}}\underline{\alpha}_{t}$, $\underline{\alpha}_{t}=(1+\underline{R}_{t})^{\frac{1}{\underline{R}{t}}}$, $\underline{R}_{t}=\min_{i\in \mathcal{I}(t)}\left\{\frac{\underline{Q}_{i}}{A_{t}}\right\}$,$\forall t\in \Omega$, $\underline{Q}_{i}=\min_{j\in\mathcal{J}_{i}} \left\{Q_{ij}\right\}, \forall i\in \mathcal{I}(t)$.
\end{prop}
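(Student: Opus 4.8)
The plan is to mirror the proof of Proposition~\ref{theorem2} for \Alg \ref{alg1}, but to exploit the fact that the PAAP online problem and its offline counterpart (\textcolor{Cerulean}{Model 2.2}) are both linear programs, so that neither an integrality-rounding argument nor the critical-index truncation used for \Alg \ref{alg1} is required. Concretely, I would combine the per-iteration primal-dual identity of \Lemma \ref{L8} with weak LP duality against the offline pair $(OPT_{3}, OPT_{4})$, and compare directly against the offline LP optimum $\mathcal{Z}^{*}(\boldsymbol{\tau})$ rather than against an integer optimum.

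First I would aggregate \Lemma \ref{L8} over a single time slot. Since \Lemma \ref{L8} gives $\Delta \mathcal{P}(i,j) = \left(1-\frac{1}{\underline{\alpha}_{t}}\right)\Delta \mathcal{D}(i,j)$ for every bid processed in the inner loops of \Alg \ref{alg2}, summing over all $i \in \mathcal{I}(t)$ and $j \in \mathcal{J}_{i}$ yields $\mathcal{P}(t) = \left(1-\frac{1}{\underline{\alpha}_{t}}\right)\mathcal{D}(t)$, where $\mathcal{P}(t)$ and $\mathcal{D}(t)$ are the primal and dual objective increments accrued during time slot $t$. I would then sum over the input sequence $\boldsymbol{\tau}$: because $\underline{\alpha} = \min_{t \in \boldsymbol{\tau}}\underline{\alpha}_{t}$ implies $1-\frac{1}{\underline{\alpha}_{t}} \geq 1-\frac{1}{\underline{\alpha}}$ for each $t$, this gives $\mathcal{P}(\boldsymbol{\tau}) \geq \left(1-\frac{1}{\underline{\alpha}}\right)\mathcal{D}(\boldsymbol{\tau})$, the PAAP analogue of \eq\eqref{eq32}.

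Next I would invoke \Lemma \ref{L9}, which certifies that \Alg \ref{alg2} produces feasible solutions for both the primal and dual problems. By weak duality, the value of the feasible dual solution upper-bounds the optimal offline LP value, so $\mathcal{D}(\boldsymbol{\tau}) \geq \mathcal{Z}^{*}(\boldsymbol{\tau})$; and since the primal resource-availability constraints are respected, no allocation needs to be revoked, so the realized social welfare coincides with the primal objective value, $W_{\text{Alg2}}(\boldsymbol{\tau}) = \mathcal{P}(\boldsymbol{\tau})$. Chaining these relations gives $W_{\text{Alg2}}(\boldsymbol{\tau}) = \mathcal{P}(\boldsymbol{\tau}) \geq \left(1-\frac{1}{\underline{\alpha}}\right)\mathcal{D}(\boldsymbol{\tau}) \geq \left(1-\frac{1}{\underline{\alpha}}\right)\mathcal{Z}^{*}(\boldsymbol{\tau})$, which establishes the competitive ratio $1-\frac{1}{\underline{\alpha}}$.

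The main obstacle, and the key conceptual point separating this result from Proposition~\ref{theorem2}, is justifying why the $\left(1-\mathbb{R}_{max}\right)$ factor present for \Alg \ref{alg1} is absent here. In the PAYG case that factor arises because binary allocation forces the critical-index user (and all later users) to be discarded, so the realized welfare is only a $\left(1-\overline{R}_{t}\right)$ fraction of the primal value at each slot. In the PAAP case the decision variables are continuous and \Alg \ref{alg2} fractionally allocates to every qualifying bid without truncation; I would therefore need to verify carefully that the fractional assignment in Line~14 together with the modified dual update in Line~15 keeps the primal solution feasible, so that $W_{\text{Alg2}}(\boldsymbol{\tau})$ equals $\mathcal{P}(\boldsymbol{\tau})$ exactly rather than a truncated fraction of it. This feasibility is precisely the content of \Lemma \ref{L9}, and once it is secured, no rounding or critical-index loss is incurred and the competitive ratio reduces cleanly to the primal-dual factor $1-\frac{1}{\underline{\alpha}}$.
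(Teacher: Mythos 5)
Your argument is correct and takes essentially the same route as the paper, whose proof (deferred to the supplementary material) is built on exactly the ingredients you assemble: the per-iteration identity of Lemma~\ref{L8}, the feasibility guarantee of Lemma~\ref{L9}, weak duality against the offline dual $OPT_{4}$ so that $\mathcal{D}(\boldsymbol{\tau})\geq\mathcal{Z}^{*}(\boldsymbol{\tau})$, and the observation that fractional allocation eliminates the $(1-\mathbb{R}_{max})$ integrality loss that appears in Proposition~\ref{theorem2}. Your diagnosis of why that factor is absent in the PAAP case is the correct one.
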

\begin{proof}
The proof of \textcolor{Cerulean}{Proposition} \ref{theorem4} is provided in \textcolor{Cerulean}{Supplementary material B}.
\end{proof}

\begin{corollary}
In a time slot, when the ratio of the quantity of mobility resources requested
by a user to the available  mobility resources approaches zero, the competitive ratio of PAYG online algorithm (\Alg\ref{alg1}) is equivalent to the competitive ratio of the PAAP online algorithm (\Alg\ref{alg2}). Namely, when $\frac{\overline{Q}_{i}}{A_{t}}\rightarrow 0$, we have $\Theta=\left(1-\mathbb{R}_{max }\right)\left(1-\frac{1}{\overline{\alpha}}\right)=\left(1-\frac{1}{\underline{\alpha}}\right)=1-\frac{1}{e}$, in which $e$ denotes Euler's number.
\end{corollary}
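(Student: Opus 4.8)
The plan is to treat the statement as a limit computation that hinges on the elementary identity $\lim_{x\to 0^{+}}(1+x)^{1/x}=e$. The two competitive ratios have already been established in \textcolor{Cerulean}{Proposition} \ref{theorem2} and \textcolor{Cerulean}{Proposition} \ref{theorem4}, so nothing about the algorithms needs to be revisited; I only need to evaluate those closed-form expressions in the regime $\overline{Q}_{i}/A_{t}\to 0$ and check that both collapse to the same value.

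First I would observe that the hypothesis $\overline{Q}_{i}/A_{t}\to 0$ drives both ratio parameters to zero simultaneously. Since $\underline{Q}_{i}=\min_{j\in\mathcal{J}_i}Q_{ij}\le\max_{j\in\mathcal{J}_i}Q_{ij}=\overline{Q}_{i}$, we have $\underline{Q}_{i}/A_{t}\le\overline{Q}_{i}/A_{t}$, so $\underline{Q}_{i}/A_{t}\to 0$ as well. Taking the maximum, respectively minimum, over the finite user set $\mathcal{I}(t)$ then gives $\overline{R}_{t}=\max_{i\in\mathcal{I}(t)}\{\overline{Q}_{i}/A_{t}\}\to 0$ and $\underline{R}_{t}=\min_{i\in\mathcal{I}(t)}\{\underline{Q}_{i}/A_{t}\}\to 0$ for every $t\in\Omega$, and consequently $\mathbb{R}_{max}=\max_{t\in\boldsymbol{\tau}}\overline{R}_{t}\to 0$.

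Next I would substitute these limits into the definitions $\overline{\alpha}_{t}=(1+\overline{R}_{t})^{1/\overline{R}_{t}}$ and $\underline{\alpha}_{t}=(1+\underline{R}_{t})^{1/\underline{R}_{t}}$. By the identity above, each $\overline{\alpha}_{t}\to e$ and each $\underline{\alpha}_{t}\to e$; since the minimum over the finite index set $\boldsymbol{\tau}$ of quantities that all converge to $e$ also converges to $e$, I obtain $\overline{\alpha}=\min_{t}\overline{\alpha}_{t}\to e$ and $\underline{\alpha}=\min_{t}\underline{\alpha}_{t}\to e$. Plugging into \textcolor{Cerulean}{Proposition} \ref{theorem2} gives $\Theta=(1-\mathbb{R}_{max})(1-1/\overline{\alpha})\to(1-0)(1-1/e)=1-1/e$, while plugging into \textcolor{Cerulean}{Proposition} \ref{theorem4} gives $\Theta=1-1/\underline{\alpha}\to 1-1/e$, so the two competitive ratios coincide in the limit.

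The only subtlety I would state most carefully is the interchange of $\min_{t}$ with the limit: because $\Omega$, and hence $\boldsymbol{\tau}$, is finite, this interchange is immediate, though it is worth flagging that the argument would require uniform control if the planning horizon were unbounded. Beyond making the one-sided limit $(1+x)^{1/x}\to e$ explicit, everything is a direct substitution, so I do not anticipate a genuine obstacle here.
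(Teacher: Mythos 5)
Your proof is correct and follows exactly the reasoning the paper leaves implicit: the corollary is stated without proof as an immediate consequence of \textcolor{Cerulean}{Propositions} \ref{theorem2} and \ref{theorem4} via the limit $(1+x)^{1/x}\to e$ as $x\to 0^{+}$, which is precisely the computation you carry out. Your additional care in noting that $\underline{Q}_{i}\le\overline{Q}_{i}$ forces $\underline{R}_{t}\to 0$ as well, and that the finiteness of $\boldsymbol{\tau}$ justifies exchanging $\min_{t}$ with the limit, only makes the argument more complete than the paper's.
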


\section{Rolling horizon configurations}
\label{S6}
In the proposed MaaS mechanisms, the travel demand data are input into the model as a ``stream'' and the mechanisms are executed periodically based on the current input data, without knowledge of future demand. To implement the proposed MaaS mechanisms, we use a rolling horizon algorithm (RHA) where $\Delta t$ denotes the time frequency at which the optimization problems are solved, and $\mathcal{T}$ denotes the length of the time horizon considered at the current iteration. The time horizon length reflects users' booking flexibility: at the iteration corresponding to time slot $t$, only users with a requested departure time in the time window $[t, t+\mathcal{T}]$ are considered in the online resource allocation problems.
\begin{figure*}[b!]
\centering
\includegraphics[width=1\textwidth]{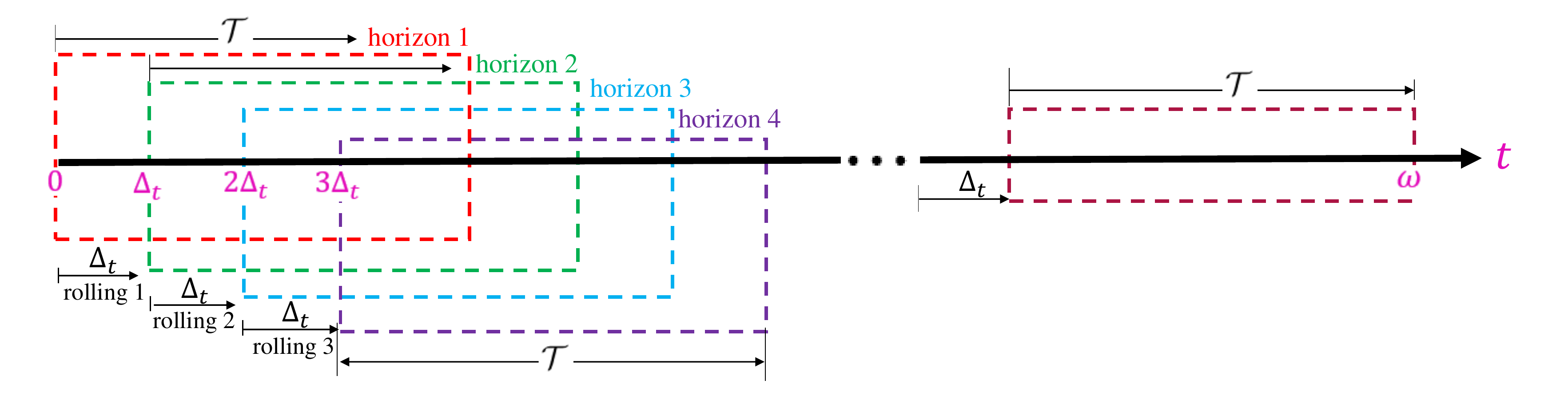}
\caption{Rolling horizon algorithm (RHA) configuration}\label{Fig5}
\end{figure*}

\Fig\ref{Fig5} illustrates that the proposed rolling horizon configuration rolls forward per $\Delta t$ time slots and solve the corresponding mobility resource allocation problem with a time horizon $\mathcal{T}$. Let $\omega$ denote the last time slot, the number of rolling processes ($n$) is written as $\omega/\Delta t$. The detailed procedure of the rolling horizon configuration is introduced in \Alg\ref{alg3}.

\begin{algorithm}[ht!]
\small
	\setstretch{1.0}
    initialize time horizon lengths $\mathcal{T}$ and time step $\Delta t$ \\
	\For{$n \in \left\{0,\Delta t,2\Delta t, \cdots, \omega/\Delta t   \right\}$ }{
	\For{$i \in \mathcal{I}(n)$}{
		{{ \If{$\Delta t=1$}{
			   \If{\emph{MaaS mechanism=PAYG}}{$ \left[x_{ij}, q(t) \right]\leftarrow$\textbf{Algorithm 1} ($\boldsymbol{B}_{i}$)   \COMMENT{execute  PAYG online algorithm}\\}
   			\ElseIf{\text{\emph{MaaS mechanism=PAAP}}} {$\left[x_{ij},q(t) \right]\leftarrow$ \textbf{Algorithm 2} ($\boldsymbol{B}_{i}$)\COMMENT{execute PAAP online algorithm}\\}
   			   	$p_{ij}\leftarrow q(t)Q_{ij}x_{ij}$\\} 
              	\ElseIf{$\Delta t>1$}{
              	\If{\emph{MaaS mechanism=\text{PAYG}}}{	
              	$\left[x_{ij}^{t}, p_{t},p_{ij}^{t}\right]\leftarrow$\textbf{Model 2.1} ($\boldsymbol{B}_{i}$)\COMMENT{execute  PAYG offline  formulation}}
              		\ElseIf{\text{\emph{MaaS mechanism=PAAP}}}{	
              		$ \left[x_{ij}^{t},p_{t},p_{ij}^{t} \right]\leftarrow$\textbf{Model 2.2} ($\boldsymbol{B}_{i}$)\COMMENT{execute  PAAP offline  formulation} }
              	}
		}	}
		}
	}
	\Return $\boldsymbol{x}$, $\boldsymbol{p}$ \\
	\caption{Rolling horizon configurations}
	\label{alg3}
\end{algorithm}

We consider four RHA configurations with different time step ($\Delta t$), time horizon lengths ($\mathcal{T}$) and optimization methods. All configurations can be executed with the both mechanisms (PAYG and PAAP). In each case, the corresponding mobility resources allocation problem can be solved either exactly (using \textcolor{Cerulean}{Model 1.2}/\textcolor{Cerulean}{Model 2.2}) or heuristically (using \Alg\ref{alg1}/\Alg\ref{alg2}).\\

\noindent\textbf{1}) \textbf{RHA} ($\Delta t=1,\mathcal{T}=1 $): rolls forward per  unit time slot, and users placing an order at time slot $t$ can request a service departing at time slot $t$ ($O_{i}=t$).\\
\textbf{2}) \textbf{RHA} ($\Delta t=1,\mathcal{T}=240$): rolls forward per unit time slot, and users placing an order at time slot $t$ can book a service departing within the time window $[t,t+\mathcal{T}]$. \\
\textbf{3}) \textbf{RHA} ($\Delta t>1,\mathcal{T}=240$): rolls forward every $\Delta t$ time slots to solve a small scale offline problem, and the users placing an order at time $t$ can book a service departing within the time window $[t,t+\mathcal{T}]$. \\
\textbf{4}) \textbf{SHA} ($\Delta t=\mathcal{T},\mathcal{T}=240$): this is a single horizon configuration aiming to provide a benchmark to evaluate alternative configurations. In this configuration, the offline problem is solved with full knowledge of the future travel demand overall time slots.

Since the first three configurations run in an online manner, they are referred to as online configurations. Instead, SHA is referred to as the offline configuration and is used as an oracle to benchmark the performance of the online RHA configurations.

\section{Numerical experiments}
\label{S7}

We conduct a series of numerical experiments to evaluate the performance of the proposed MaaS mechanisms. Specifically, we discuss the impacts of several parameters, i.e. the number of bids, bid range ratio, travel demand in a time slot, speed factor, and capacity. We also examine the impact of different types of unit price functions on social welfare, and numerically validate the derived competitive ratios. All numerical experiments are conducted using Python 3.7.4 and CPLEX Python API on a Windows 10 machine with, Intel(R) Core i7-8700 CPU $@$ 3.20 GHz, 3192 Mhz, 6 Core(s) and with 64 GB of RAM.

\subsection{Input data and parameter settings}
Real-time MaaS requests information is an input to the proposed mechanisms. To obtain users' request data in the proposed MaaS system, we conduct stochastic simulations under different auction settings, in which the parameters are set as follows. We consider five types of transport modes with different  commercial speed (km/min) and inconvenience cost per unit of time (\$/min) given in \T\ref{T4}. 
\begin{table}[ht]
\setlength{\abovecaptionskip}{0pt}
\setlength{\belowcaptionskip}{0pt}
\centering
\footnotesize
\caption{The commercial speed and inconvenience cost per unit of time in terms of different modes}
\begin{tabular}{cccccc}
\toprule
\multirow{3}{*}{Modes} & $\mathrm{m}=1$ & $\mathrm{m}=2$ & $\mathrm{m}=3$ & $\mathrm{m}=4$ & $\mathrm{m}=5$ \\
&\multirow{2}{*}{Taxi}&Ride sharing with 2 riders&Ride sharing with 3 riders&Public transit&Bicycle-sharing\\
\midrule
Average speed & $v_{1}$ & $v_{2}$ & $v_{3}$ & $v_{4}$ & $v_{5}$ \\
$(\mathrm{km} / \mathrm{min})$ & 0.5 & 0.3 & 0.25 & 0.18&0.1 \\
\hline Inconvenience & $\delta_{1}$ & $\delta_{2}$ & $\delta_{3}$ & $\delta_{4}$ & $\delta_{5}$ \\
cost (\$)/min& 0 & 0.5 & 1&2&6 \\
\bottomrule
\end{tabular}
\label{T4}
\end{table}

In the PAYG simulations, Traveller $i$'s MaaS request information is $\left\{D_{i}, O_{i},\Phi_{i}, \Gamma_{i}, \left\{T_{ij}, b_{ij} : j \in \mathcal{J}_i\right\}\right\}$.  User $i$'s requested distance ($D_{i}$) for each trip is randomly generated within [1km,18km]. User $i$'s requested departure time is  generated in different ways under different rolling horizon configurations as discussed in \s\ref{S6}. Both travel delay budget ($\Phi_{i}$) and inconvenience tolerance ($\Gamma_{i}$) are user $i$'s own characteristic and have a reverse relationship with her bidding price, $\Phi_{i}$ is randomly generated within $\left[0,\frac{100}{b_{ij}}\right]$, and $\Gamma_{i}$ is randomly generated within $\left[0,\frac{100 D_{i}}{b_{ij}}\right]$. To generate a realistic MaaS system, the requested travel time for each trip is set within the range of travel time taken by the fastest mode (taxi) and the slowest mode (bicycle-sharing), namely, the requested travel time of user $i$'s  bid $j$ ($T_{ij}$) is randomly generated within  $\left[\frac{D_{i}}{v_{5}},\frac{D_{i}}{v_{1}}\right]$. Given the travelling distance, the bidding price ($b_{ij}$) is set based on the tariff of current transportation system in Sydney including: Uber, metro, bus, Tram and Lime. At each time slot, the minimum unit bidding price ($b_{min}$) is set based on the price of public transit in Sydney\footnote{\cite{opal} Trip Planner can be used to estimate the fare of different public transport modes in NSW, Australia.}, and the maximum unit bidding price ($b_{max}$) is set based on the price of UberX in Sydney, which varies over the time during one day\footnote{ \cite{uber}'s Real-time Estimator provides real-time fare estimates on each trip.}. Accordingly, user $i$'s bidding price ($b_{ij}$) is randomly generated within $[b_{\min}Q_{ij},b_{\max}Q_{ij}]$. 

The operation time of the MaaS system is set to 20 hours every day (6:00am-01:00am). We consider time slot of 1 min, hence there are 1200 time slots per day, and the capacity of mobility resources in each time slot is set to 500. The number of travellers placing an order at time slot $t$ ($\lambda_{t}$) satisfies the normal distribution, where the mean value and standard deviation are set as different values between peak-hours and non-peak hours as indicated in \T \ref{T5}.
\begin{table*}[t!]
\footnotesize
\setlength{\abovecaptionskip}{0pt}
\setlength{\belowcaptionskip}{0pt}
\centering
\caption{The travel demand of each time slot ($\lambda_{t}$) in the PAYG simulation}
\begin{tabular}{cccccc}
\toprule
Time & $6: 00-7: 00$ & $8: 00-9: 00$ & $10: 00-17: 00$ & $18: 00-19: 00$ & $20: 00-01: 00$ \\
\midrule
Time slot & [$1,2 \cdots,120$] & [$121,122, \cdots 240$] & [$241,182\cdots,720$] & [$721,722 \cdots,840$] & [$841,842, \cdots,1200$] \\
$\lambda_{t}$ & $\lambda_{t} \sim \mathcal{N}\left(2, 1^{2}\right)$ & $\lambda_{t} \sim \mathcal{N}\left(8, 2^{2}\right)$ & $\lambda_{t} \sim \mathcal{N}\left(2, 1^{2}\right)$ & $\lambda_{t} \sim \mathcal{N}\left(8, 2^{2}\right)$ & $\lambda_{t} \sim \mathcal{N}\left(2, 1^{2}\right)$  \\
\bottomrule
\end{tabular}
\label{T5}
\end{table*}

In the PAAP simulations, user $i$'s MaaS request information is $\left\{D_{i}, O_{i},\Phi_{i}, \Gamma_{i}, L_{i}, \left\{T_{ij}, b_{ij} : j \in \mathcal{J}_i\right\}\right\}$. user $i$'s requested distance ($D_{i}$) for a mobility package is randomly generated within [1km,300km], user $i$'s requested time period of mobility package ($L_{i}$) is randomly generated within [5,14]. The other parameters are generated in the same way as in the PAYG simulations. In the PAAP simulations, each time slot represents one day and we consider 100 time slots (days) for a time cycle, and the capacity of mobility resources at each time slot is set to 10000. The number of travellers placing an order at time slot $t$ ($\kappa_{t}$) satisfies the normal distribution, $\kappa_{t} \sim \mathcal{N}\left(50,10^{2}\right)$. 

\subsection{Numerical Results}
We conduct sensitivity analysis on a series of parameters in \s\ref{721}, compare the PAYG and PAAP mechanisms in \s\ref{722}, validate the derived competitive ratios in \s\ref{723} and discuss the booking flexibility under different rolling horizon lengths in \s\ref{724}; Finally compare the social welfare and computation time under different online and offline configurations in \s\ref{725}.

\subsubsection{Sensitivity analysis on the online MaaS mechanisms}
\label{721}
In this subsection, we execute \Alg \ref{alg1} to evaluate the performance of the PAYG online mechanism by conducting a sensitivity analysis on the bid range ratio $b_{\max}/b_{\min}$, unit price functions, the speed factor and capacity in terms of acceptance ratio and social welfare.

We define the acceptance ratio as the ratio of the number of accepted users to the number of users participating the online auction at time slot $t$, which is an important index for evaluating user satisfaction. For clarity, we only show the hourly average acceptance ratio, i.e. the average acceptance ratio for each bin of 60 time slots. \Fig \ref{fig7a}, \Fig \ref{fig7b} and  \Fig\ref{fig7c} show the acceptance ratio under linear unit price function (\eq\eqref{eq4}), quadratic unit price function (\eq\eqref{eq6}) and exponential unit price function (\eq\eqref{eq8}) for varying bid range ratios, respectively. 

\begin{figure}[ht!]
\centering
	\subfloat[Linear unit price function]{\includegraphics[width=0.9\textwidth]{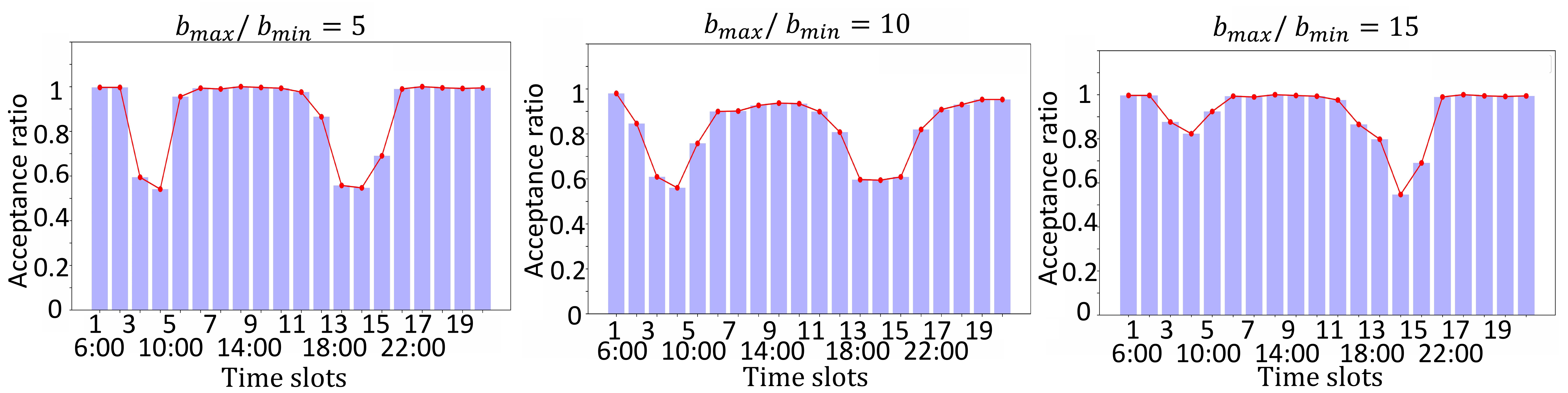}\label{fig7a}}\\
\subfloat[Quadratic unit price function ]{\includegraphics[width=0.9\textwidth]{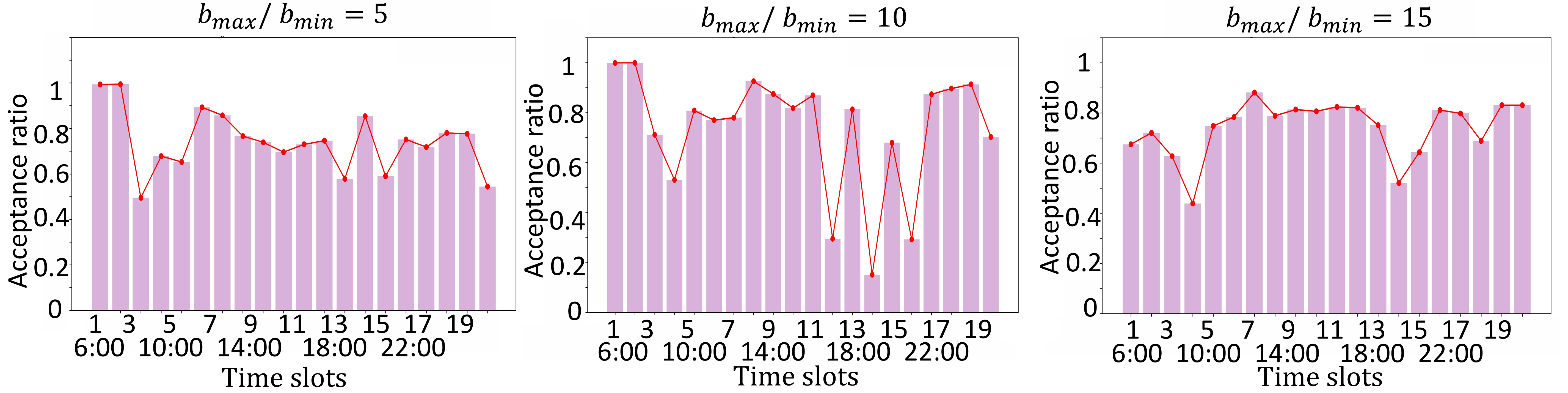}\label{fig7b}}\\
\subfloat[Exponential unit price function ]{\includegraphics[width=0.9\textwidth]{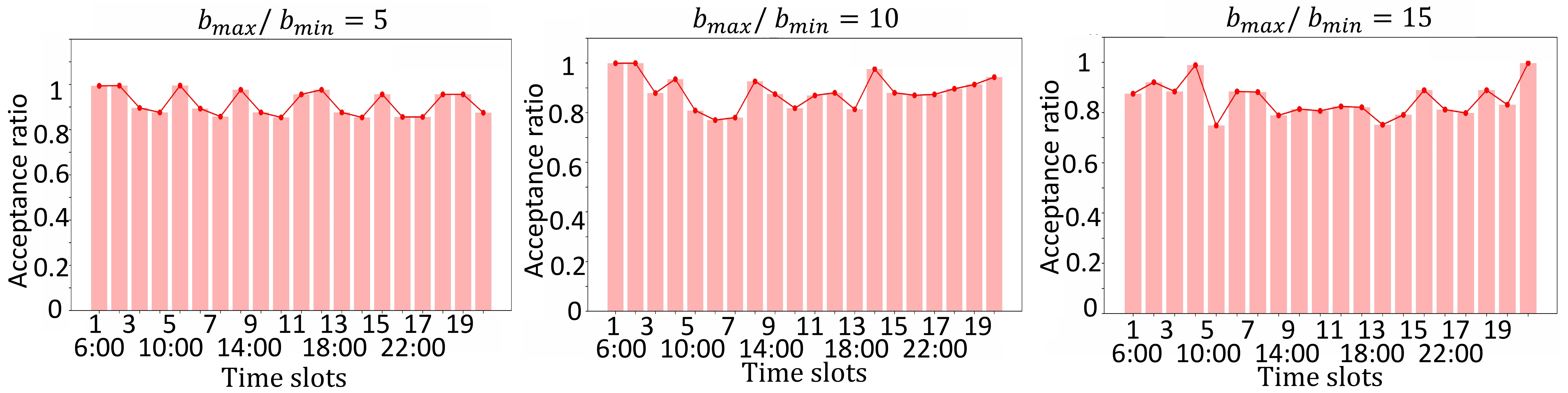}\label{fig7c}}
\caption{Acceptance ratio under different types of unit price functions with different  $b_{\max}/b_{\min}$}
\label{Fig77}
\end{figure}

\Fig \ref{Fig77} shows that the acceptance ratio under exponential unit price function is higher than that under other types of functions, this is the reason why the  exponential unit price function (\eq\eqref{eq8}) is used as the iteration rule in Line 16 of \Alg\ref{alg1}.
Then we report the variation of the hourly average social welfare in \Fig \ref{fig8}. We find that if $b_{\max}/b_{\min}=5$, the value of social welfare is higher than its counterparts under all three types of unit price functions. Moreover, the social welfare in terms of time slot exhibits a similar pattern under three types of unit price functions.

To observe the influence of capacity $C$ and  travel mode speeds onto social welfare, we set the value of $b_{max}/b_{min}$ to 5 and conduct sensitivity analysis on these parameters reported in \Fig\ref{figCS}. \Fig\ref{figcap} shows that social welfare grows with the increase of capacity over all time slots, and that the social welfare remains unchanged beyond a capacity of 500.  then apply a speed factor ($-75\%$, $-50\%$,$-25\%$,1,$+25\%$,$+50\%$) on the commercial speed of each travel mode $v_{m}$ and report the results of social welfare. \Fig\ref{figSpeed} shows that the speed factor has very limited influence on social welfare over all time slots, and thus illustrate the reliability of the proposed MaaS system.
\begin{figure*}[ht!]
	\subfloat[Linear unit price function]{\includegraphics[width=0.33\textwidth]{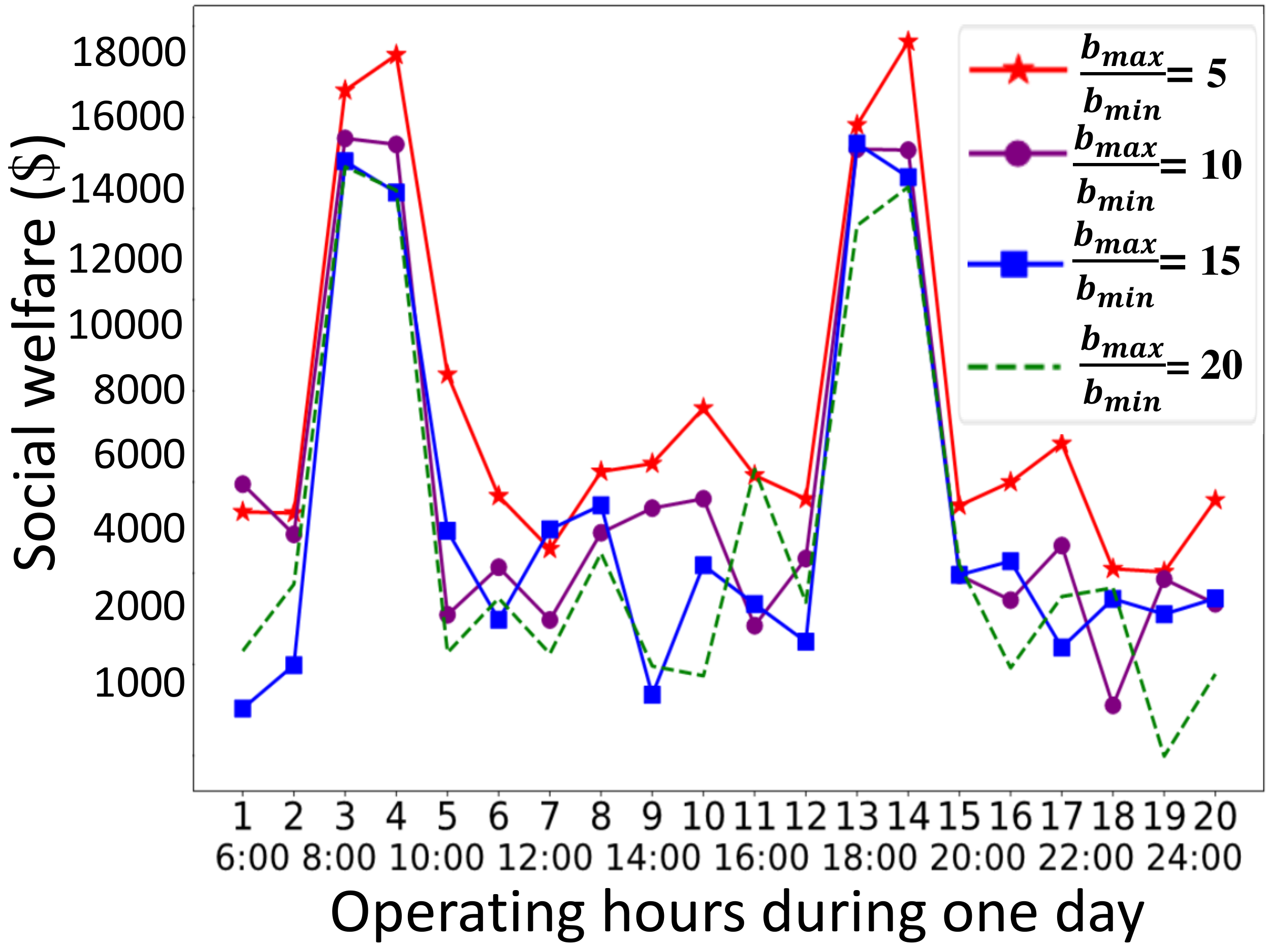}\label{fig8a}}
    \subfloat[Quadratic unit price function]{\includegraphics[width=0.33\textwidth]{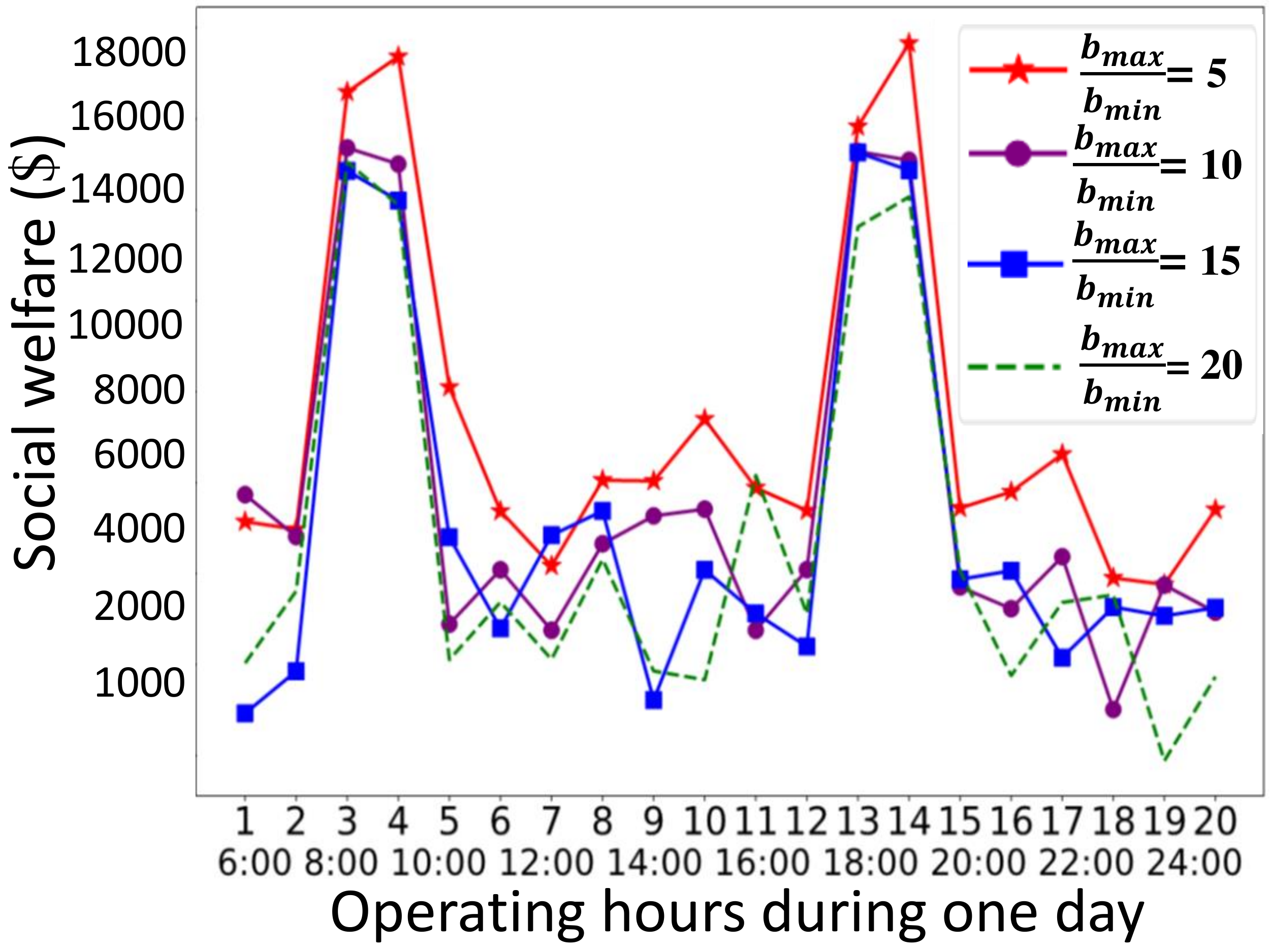}\label{fig8b}}
    \subfloat[Exponential unit price function]{\includegraphics[width=0.33\textwidth]{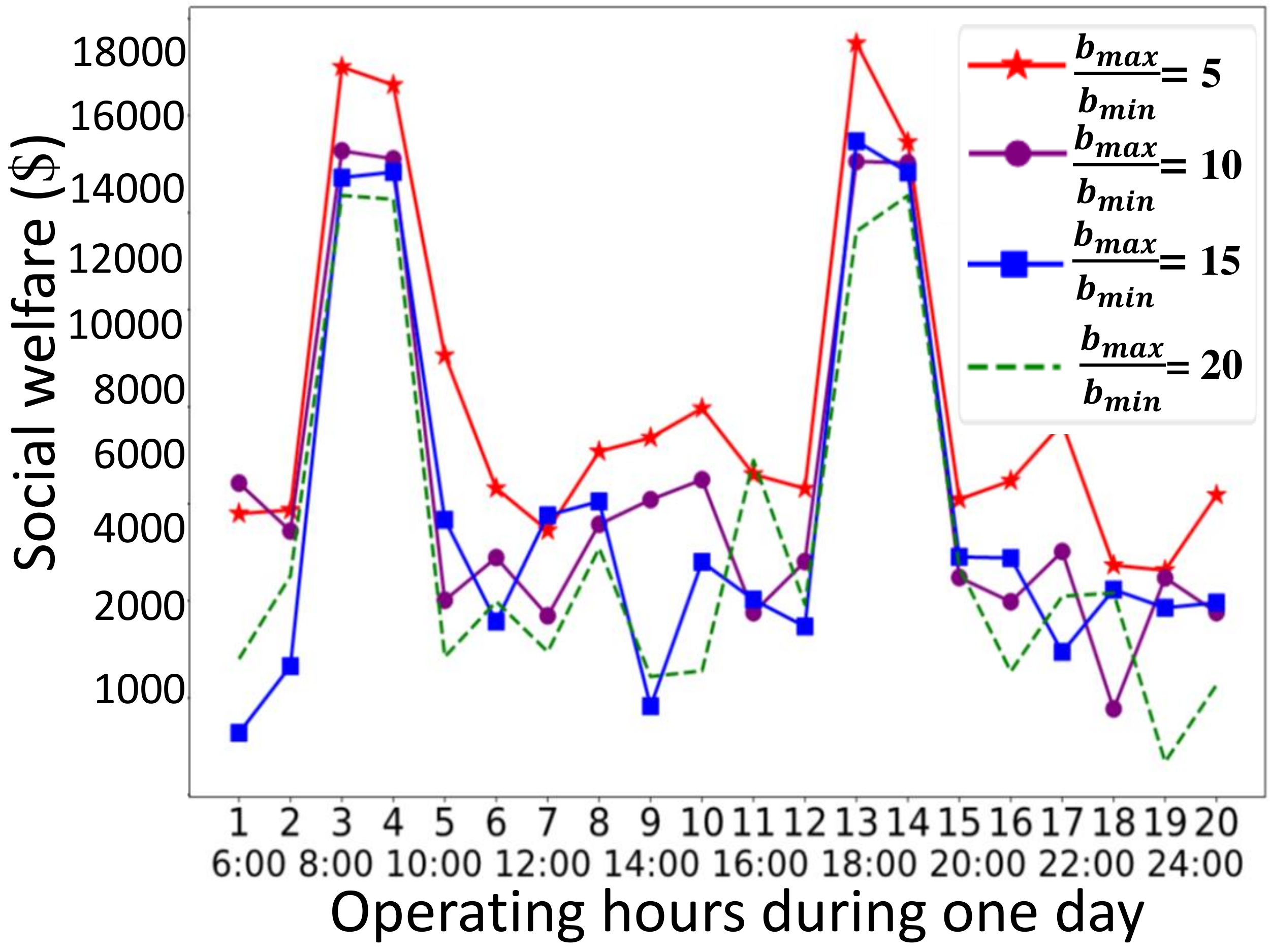}\label{fig8c}}
	\caption{Sensitivity analysis on different types of pricing functions under different $b_{max}/b_{min}$}\label{fig8}
\end{figure*}
\begin{figure}[ht!]
	\subfloat[Sensitivity analysis on capacity ($C$)]{\includegraphics[width=0.45\textwidth]{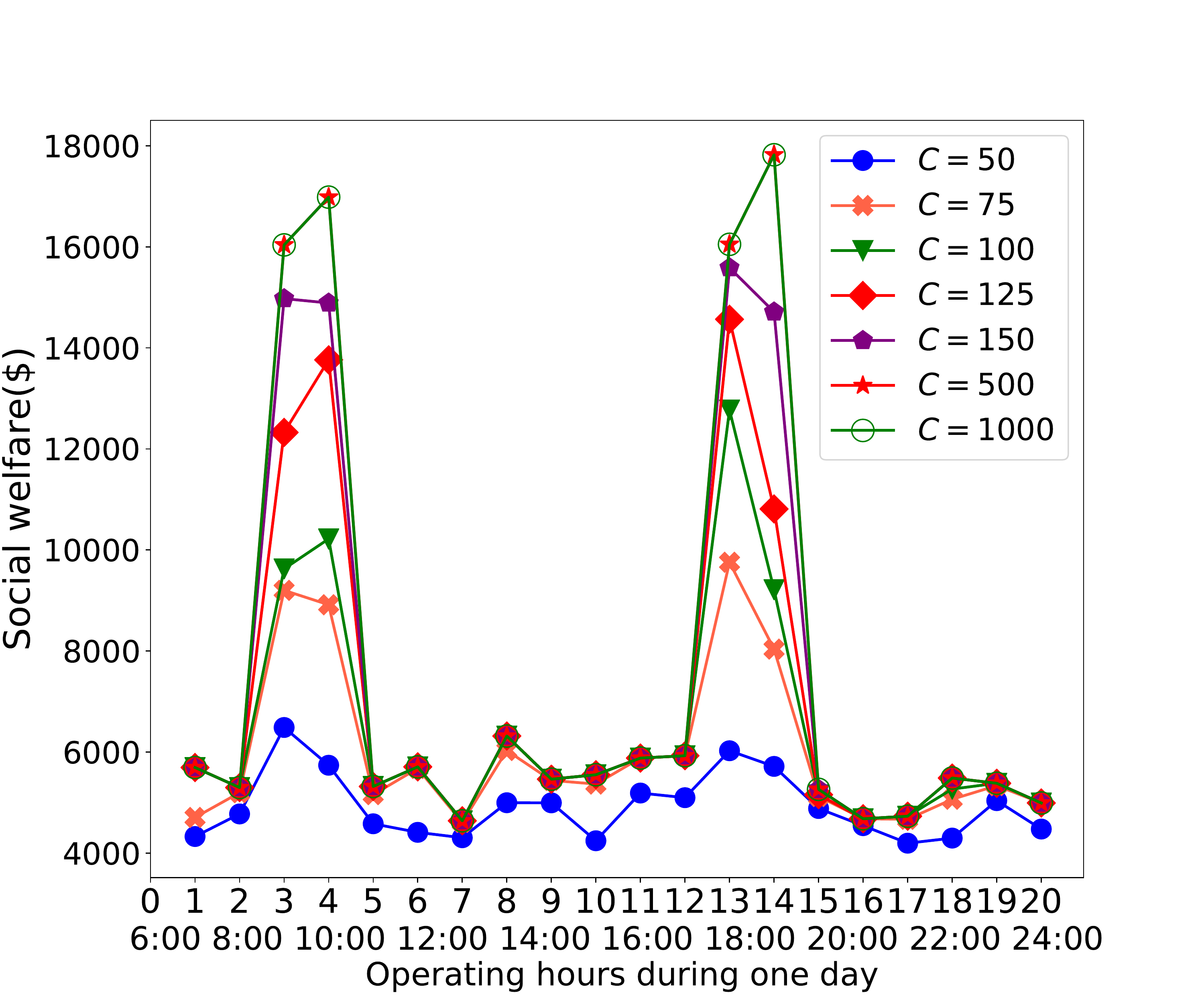}\label{figcap}}
    \subfloat[Sensitivity analysis on speed factor ($\Delta v$)]{\includegraphics[width=0.45\textwidth]{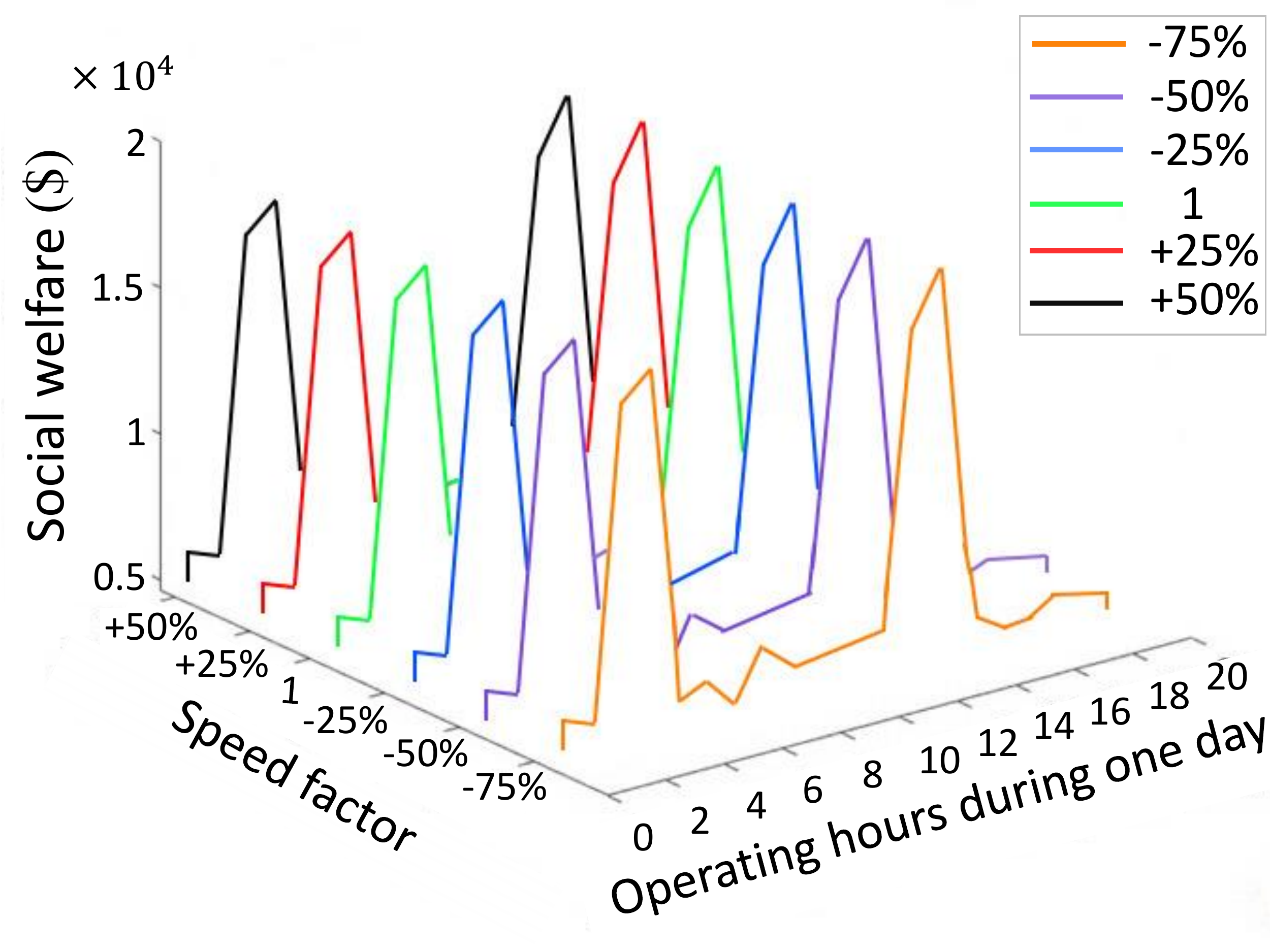}\label{figSpeed}}
	\caption{Sensitivity analysis on capacity ($C$) and speed factor ($\Delta v$)}
	\label{figCS}
\end{figure}

\subsubsection{Comparison of the PAYG and PAAP mechanisms}
\label{722}
In this subsection, we compare the daily social welfare achieved using the PAYG and PAAP mechanisms, denoted by $S_{PAYG}$ and $W_{PAAP}$, respectively. The travel demand at time slot $t$ in the PAYG mechanism ($\lambda_{t}$) and in the PAAP mechanism ($\kappa_{t}$) are given in \T\ref{T5} and \T\ref{T7}, respectively. Since each time slot represents one day in the PAAP mechanism and represents one minute in the PAYG mechanism, travel demand at time slot $t$ in PAAP mechanism ($\kappa_{t}$) is set as the summation of $\lambda_{t}$ over 1200 time slots on workdays,  ($\kappa_{t}=\sum_{t=1}^{1200}\lambda_{t}$), and is set as $\kappa_{t}=h\sum_{t=1}^{1200}\lambda_{t}$ on weekends, where $h$ is randomly set within $[$40$\%,$80$\%]$. The time period ($L_{i}$) is set as 1, 5, 6 and 7 in different weeks, and user data for the PAAP mechanism are obtained based on that in the PAYG  mechanism by multiplying the parameters ($T_{ij}, b_{ij}, \Phi_{i}, \Gamma_{i}, \underline{Q}_{i}$) with $L_{i}$. Let $W_{PAYG}$ denote the social welfare of one time slot (min) in the PAYG mechanism. If $L_{i}=1$, $W_{PAAP}$ corresponds to the summation of $W_{PAYG}$ over 1200 time slots, $S_{PAYG}=\sum_{t=1}^{1200}W_{PAYG}$. 

\begin{table*}[!ht]
\footnotesize
\setlength{\abovecaptionskip}{0pt}
\setlength{\belowcaptionskip}{0pt}
\centering
\caption{Parameters setting in the PAAP mechanism}
\begin{tabular}{ccccccccc}
\toprule
\multirow{2}{*} { Day } & \multicolumn{2}{c} { Week 1 } & \multicolumn{2}{c} { Week 2 } & \multicolumn{2}{c} { Week 4} & \multicolumn{2}{c} { Week 3 (6,7,8) } \\
\cline { 2 -9 } &  weekday & weekend & weekday & weekend & weekday & weekend& weekday & weekend\\
\midrule
$t$ & $[1\cdots,5]$ & [6,7] & [$8,\cdots,12$] & [13,14]& $[22,\cdots,26]$& [27,28]& [$15,\cdots,19$] &[20,21]\\
$L_{i}$ & 1 & 1 &7 & 7 & 6 &6 &5 &5 \\
\bottomrule
\end{tabular}
\label{T7}
\end{table*}
We execute Algorithms \ref{alg1} and \ref{alg2} and report results in Figures \ref{Fig10} and \ref{Fig11}. \Fig \ref{Fig10a} shows that the social welfare during morning peak hours (time slot $120-240$) and evening peak hours (time slot $720-840$) is considerably higher than that in non-peak hours using the PAYG mechanism.
\begin{figure*}[!ht]
\centering
	\subfloat[$t-W_{PAYG}$ ]{\includegraphics[width=0.45\textwidth]{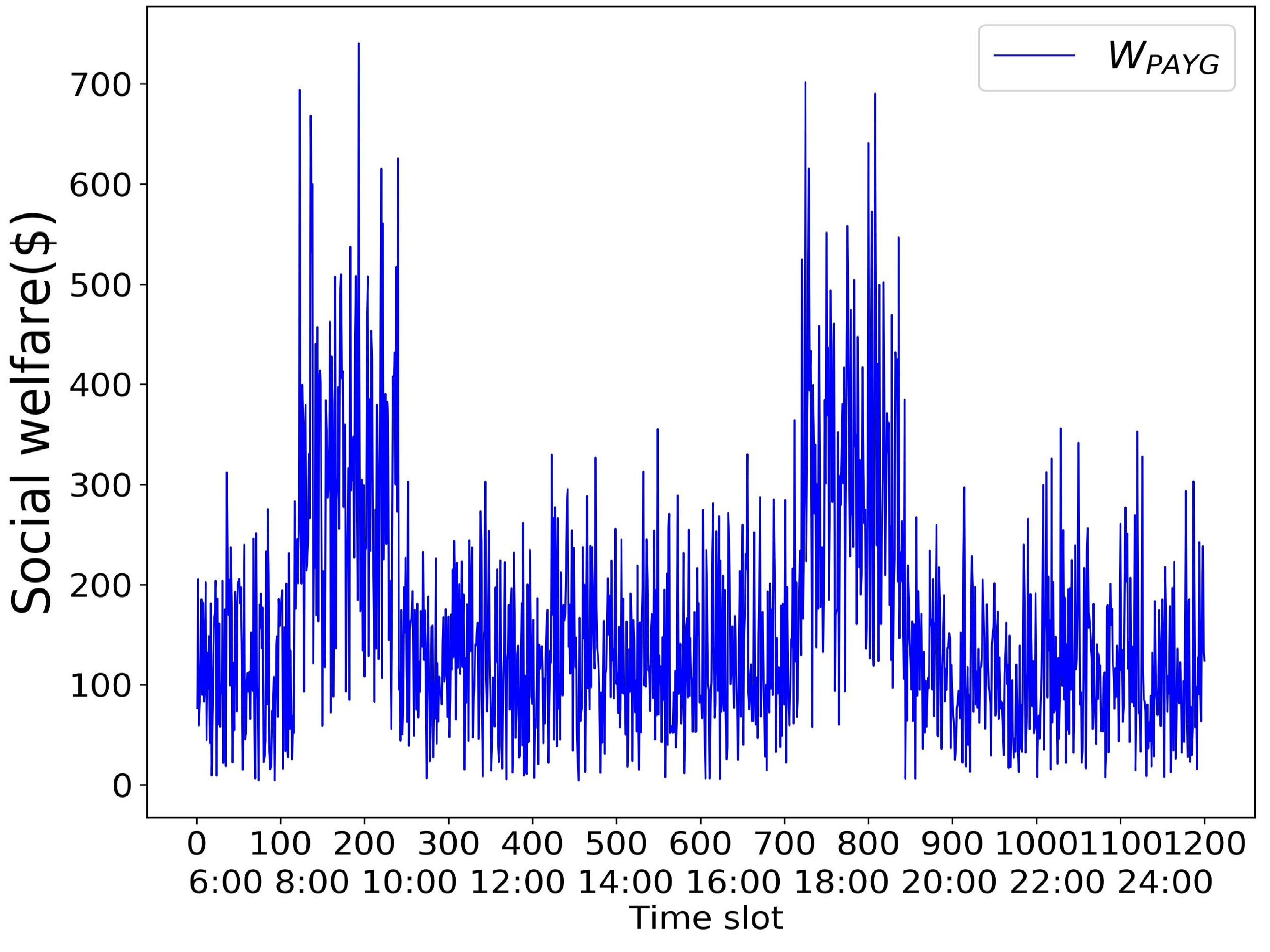}\label{Fig10a}}
    \subfloat[$t-p_{t}$ and $t-A_{t}$]{\includegraphics[width=0.45\textwidth]{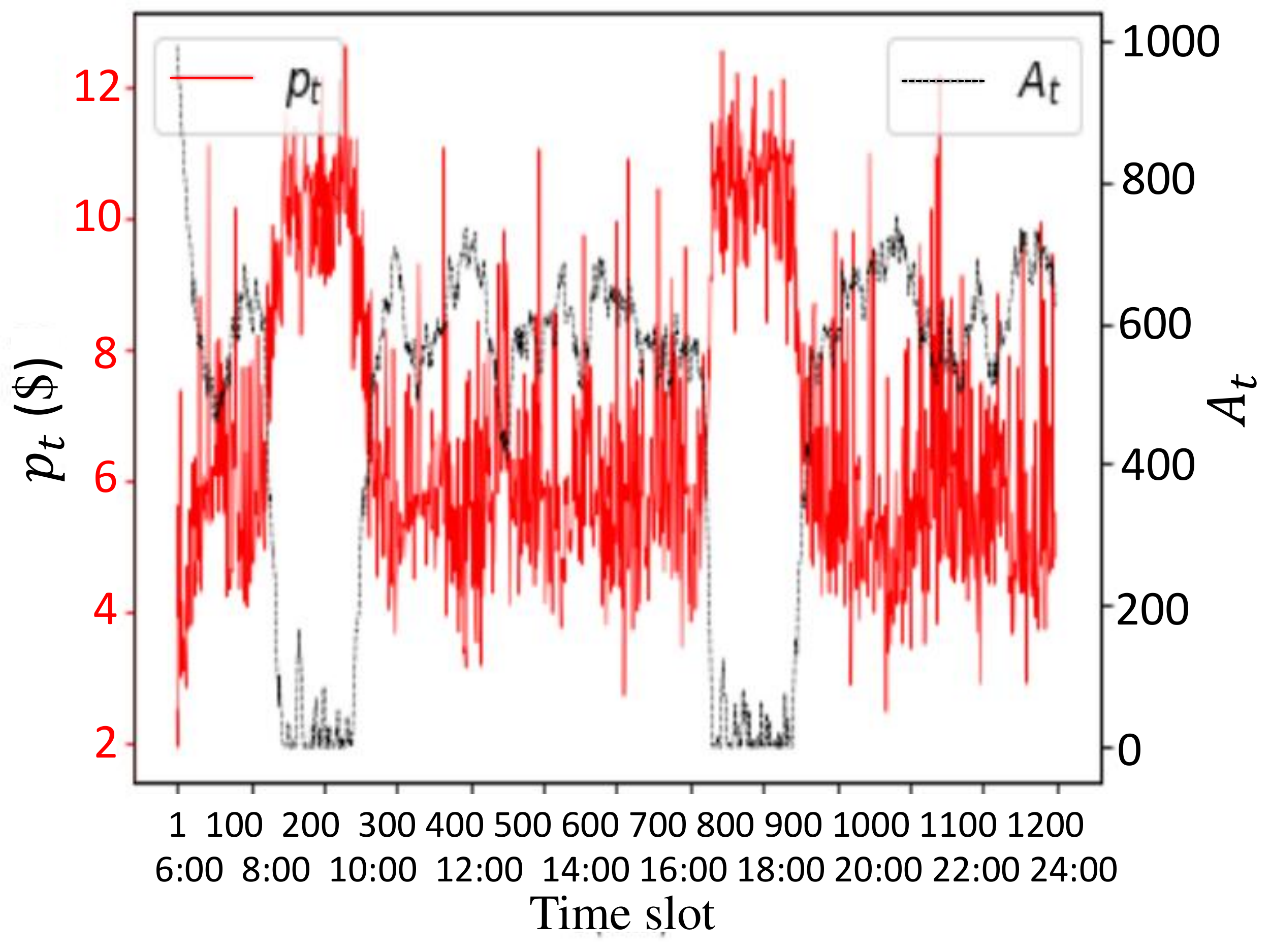}\label{Fig10b}}
	\caption{Social welfare ($W_{PAYG}$), unit price ($p_{t}$) and available mobility resources ($A_{t}$) in the PAYG mechanism}\label{Fig10}
\end {figure*}
 \begin{figure*}[ht!]
 \centering
	\subfloat[$t-W_{PAAP}$]{\includegraphics[width=0.46\textwidth]{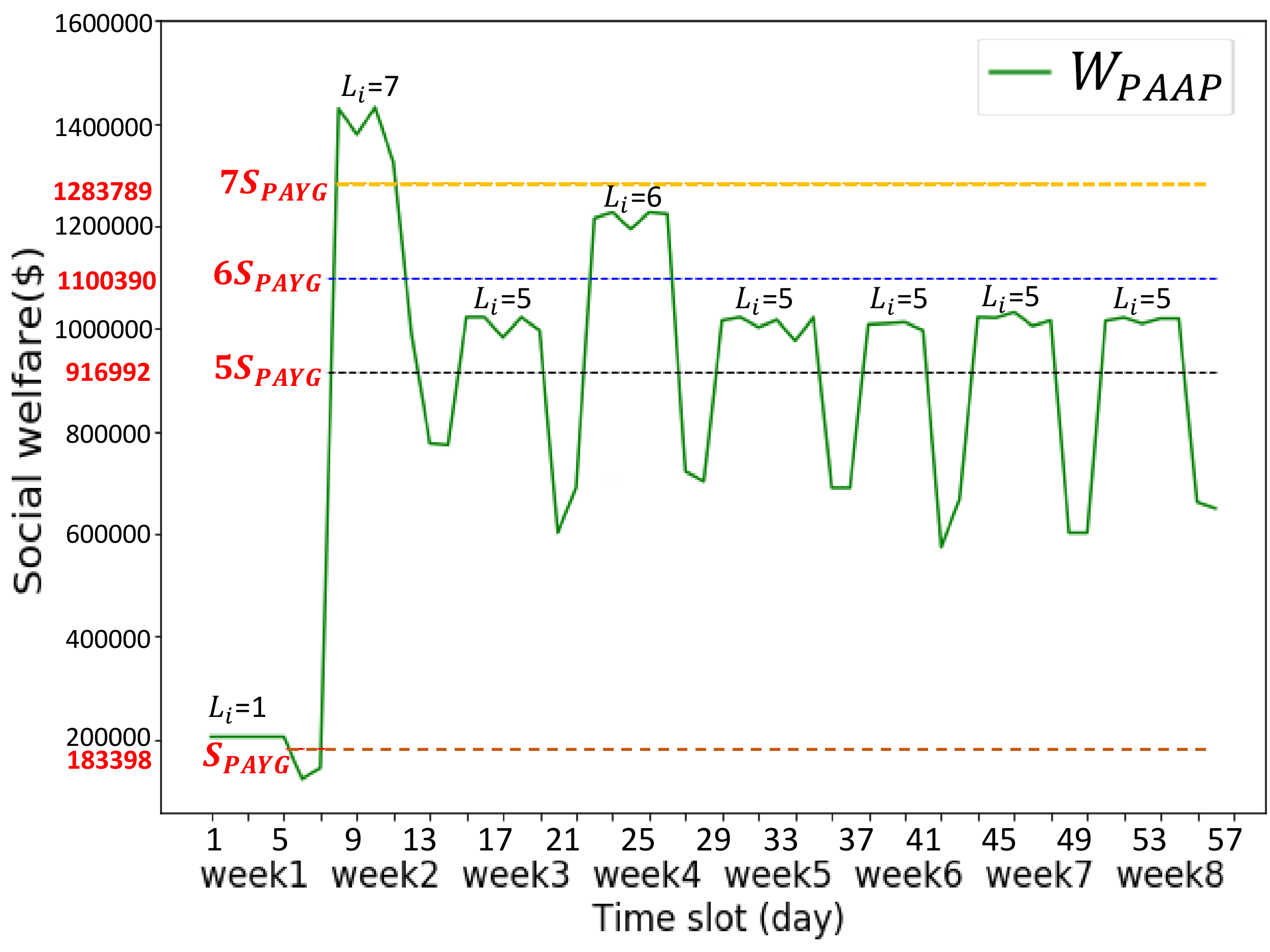}\label{Fig11a}}
    \subfloat[$t-p_{t}$ and $t-A_{t}$]{\includegraphics[width=0.46\textwidth]{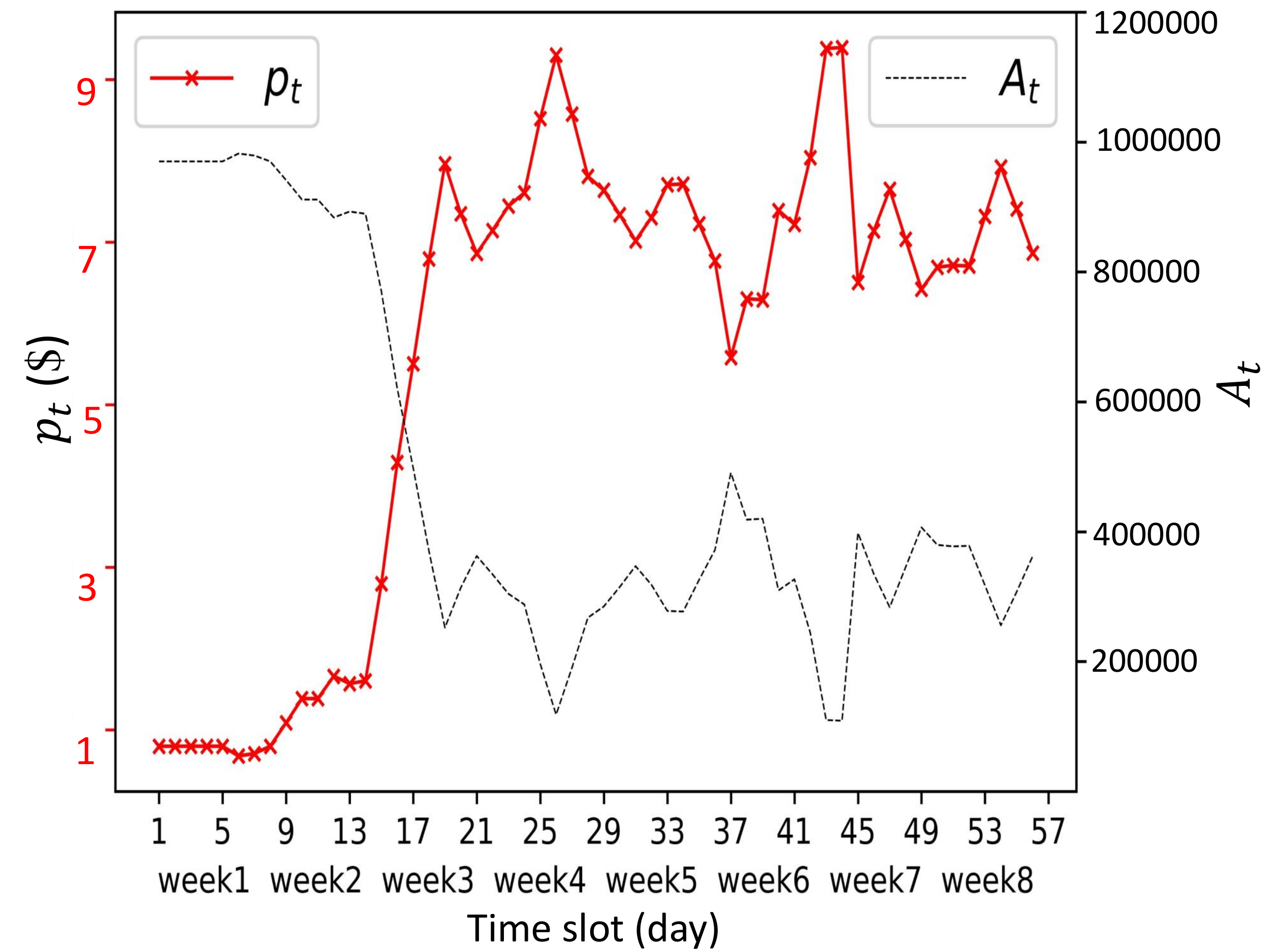}\label{Fig11b}}
	\caption{Social welfare ($W_{PAAP}$), unit price ($p_{t}$) and available mobility resources ($A_{t}$) in the PAAP mechanism}\label{Fig11}
\end {figure*}

Comparing the social welfare obtained over time period $L_{i}$ using the PAYG mechanism, $L_{i}\cdot S_{PAYG}$, with that obtained using the PAAP mechanism, $W_{PAAP}$ (e.g., comparing $S_{PAYG} (\$183398$) with $W_{PAAP}$ in week 1 ($L_{i}=1$) and $7 S_{PAYG}$ (\$1283789) with $W_{PAAP}$ in week 2 ($L_{i}=7$)), we find that the social welfare obtained using the PAAP mechanism is higher than that using the PAYG mechanism (\Fig\ref{Fig11a}). Moreover, \Fig \ref{Fig10b} and \Fig \ref{Fig11b} show that the unit price at time slot $t$ exhibits an opposite pattern against the available resources at time slot $t$ due to the time-varying pricing rule, and the range of unit price ($p_{t}$) in the PAAP mechanism (\$1$\sim$\$9) is lower than $p_{t}$ in the PAYG mechanism (\$2$\sim$\$12). 

\subsubsection{Competitive ratio analysis}
\label{723}
We analyse the competitive ratio ($\Theta$) given in \textcolor{Cerulean}{Proposition}\ref{theorem2} (\textcolor{Cerulean}{Proposition}\ref{theorem4}) by comparing with the social welfare ratio ($\mathcal{R}$) defined as the ratio of social welfare obtained by \Alg \ref{alg1} (\Alg\ref{alg2}) to the corresponding offline problem \textcolor{Cerulean}{Model 2.1} (\textcolor{Cerulean}{Model 2.2}).
\begin{figure*}[ht]
	\subfloat[$|\mathcal{J}|-Nt-\Theta$]{\includegraphics[width=0.33\textwidth]{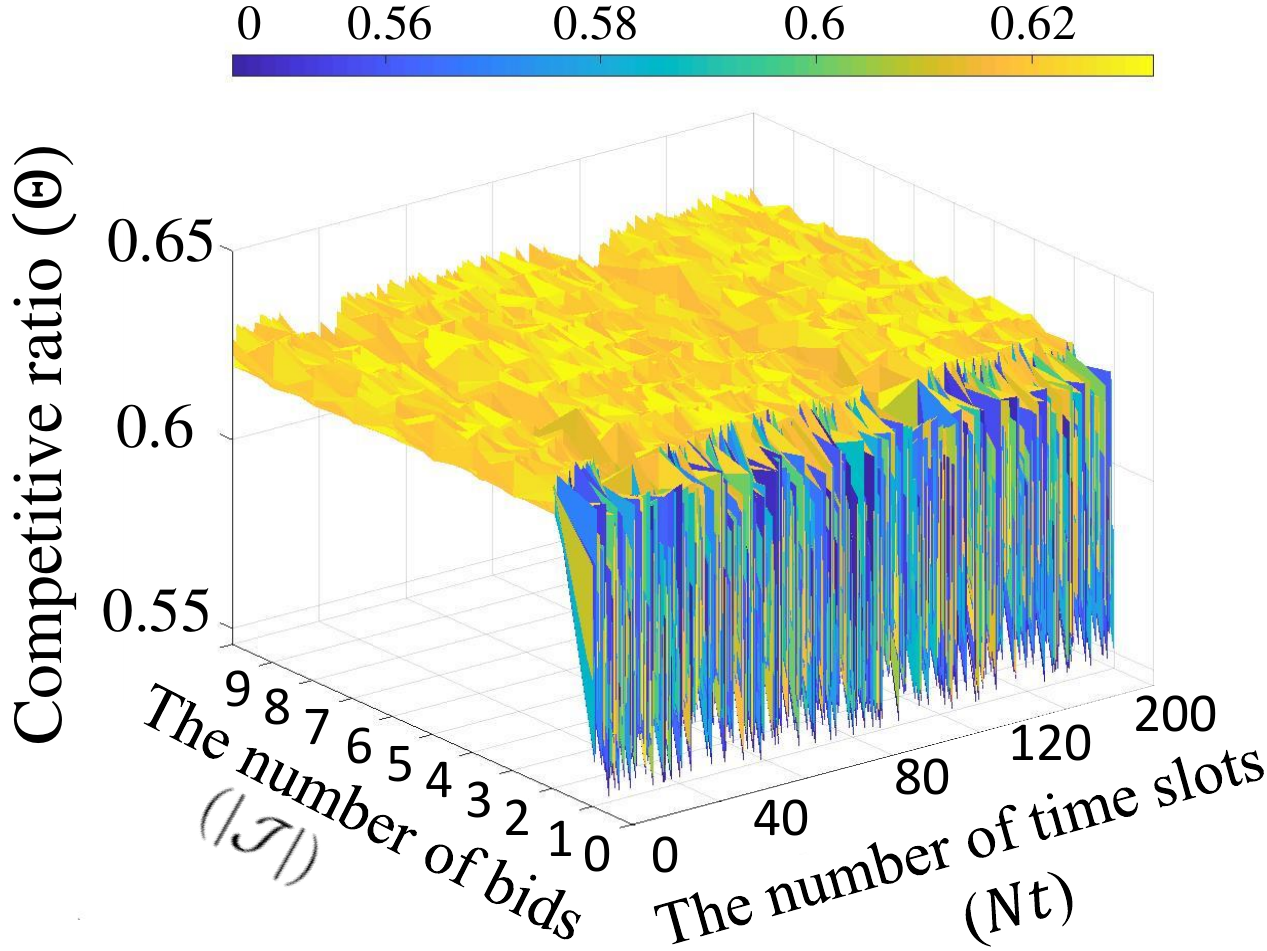}\label{fig12a}}
    \subfloat[$|\mathcal{J}|-\Theta$]{\includegraphics[width=0.33\textwidth]{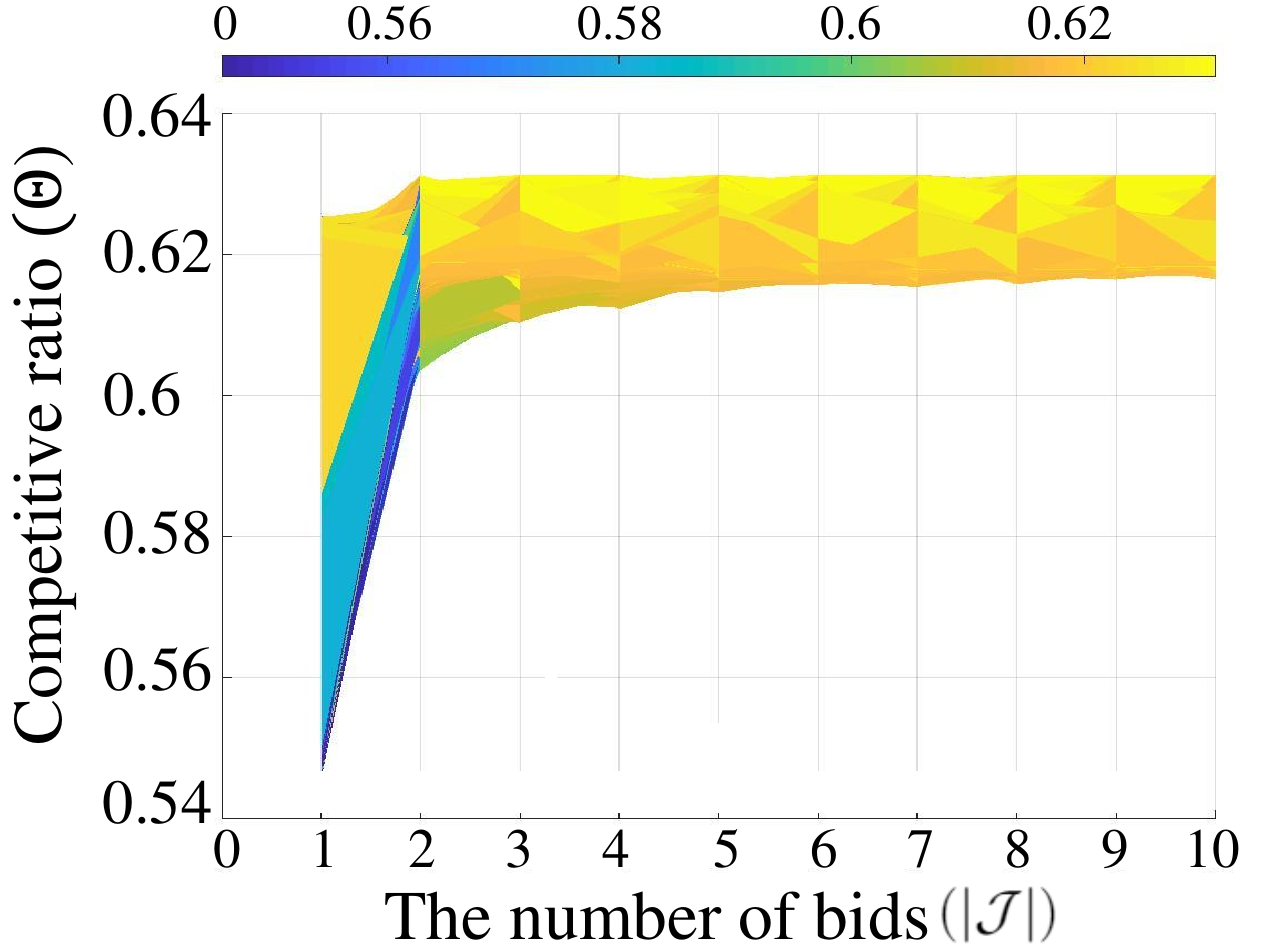}\label{fig12b}}
    \subfloat[$Nt-\Theta$]{\includegraphics[width=0.33\textwidth]{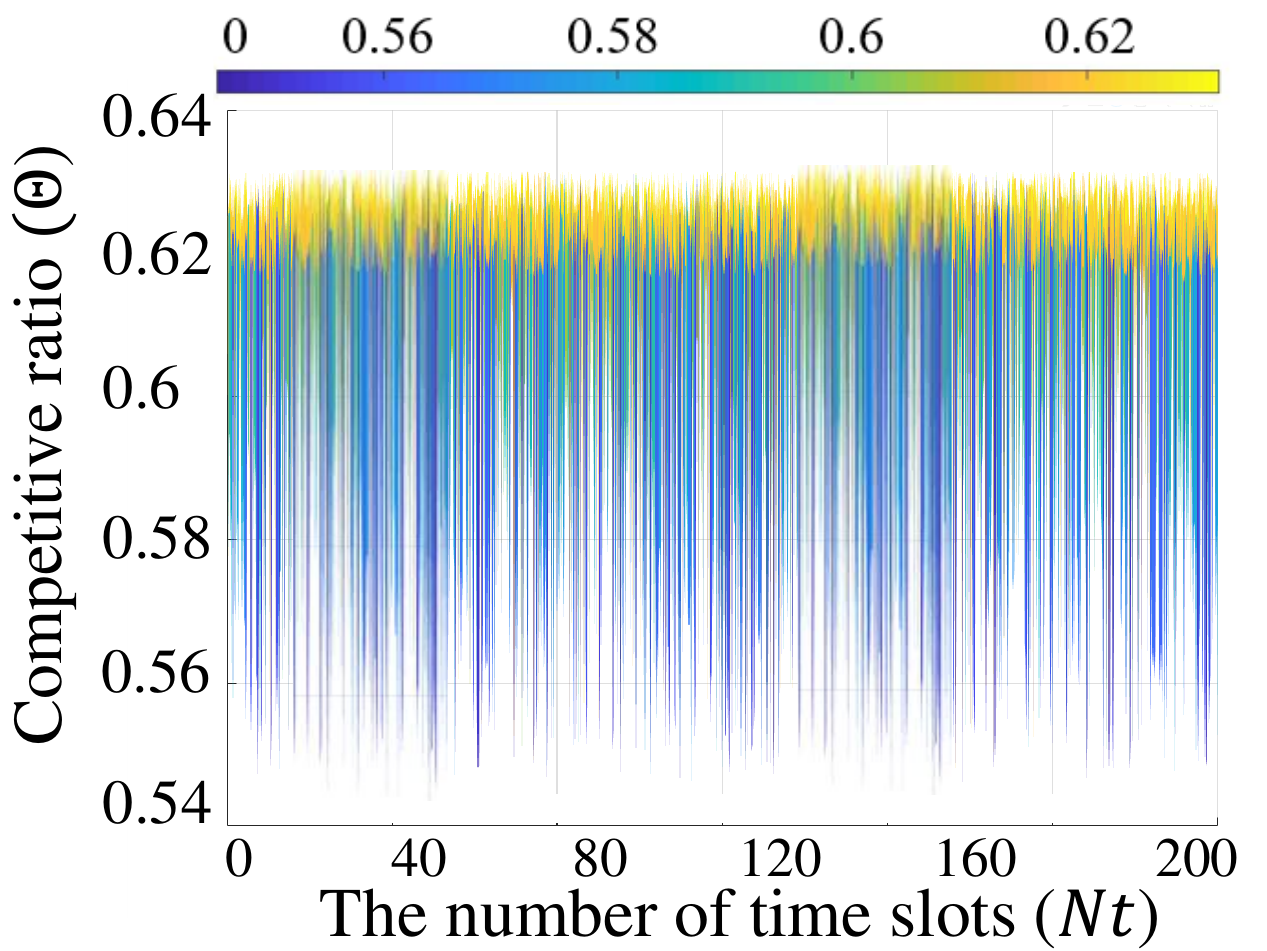}\label{fig12c}}
	\caption{Competitive ratio ($\Theta$), the number of bids ($|\mathcal{J}|$) and the number of time slots ($Nt$) in the PAYG mechanism }\label{fig12}
\end{figure*}
\begin{figure*}[ht!]
	\subfloat[$|\mathcal{J}|-Nt-\mathcal{R}$]{\includegraphics[width=0.33\textwidth]{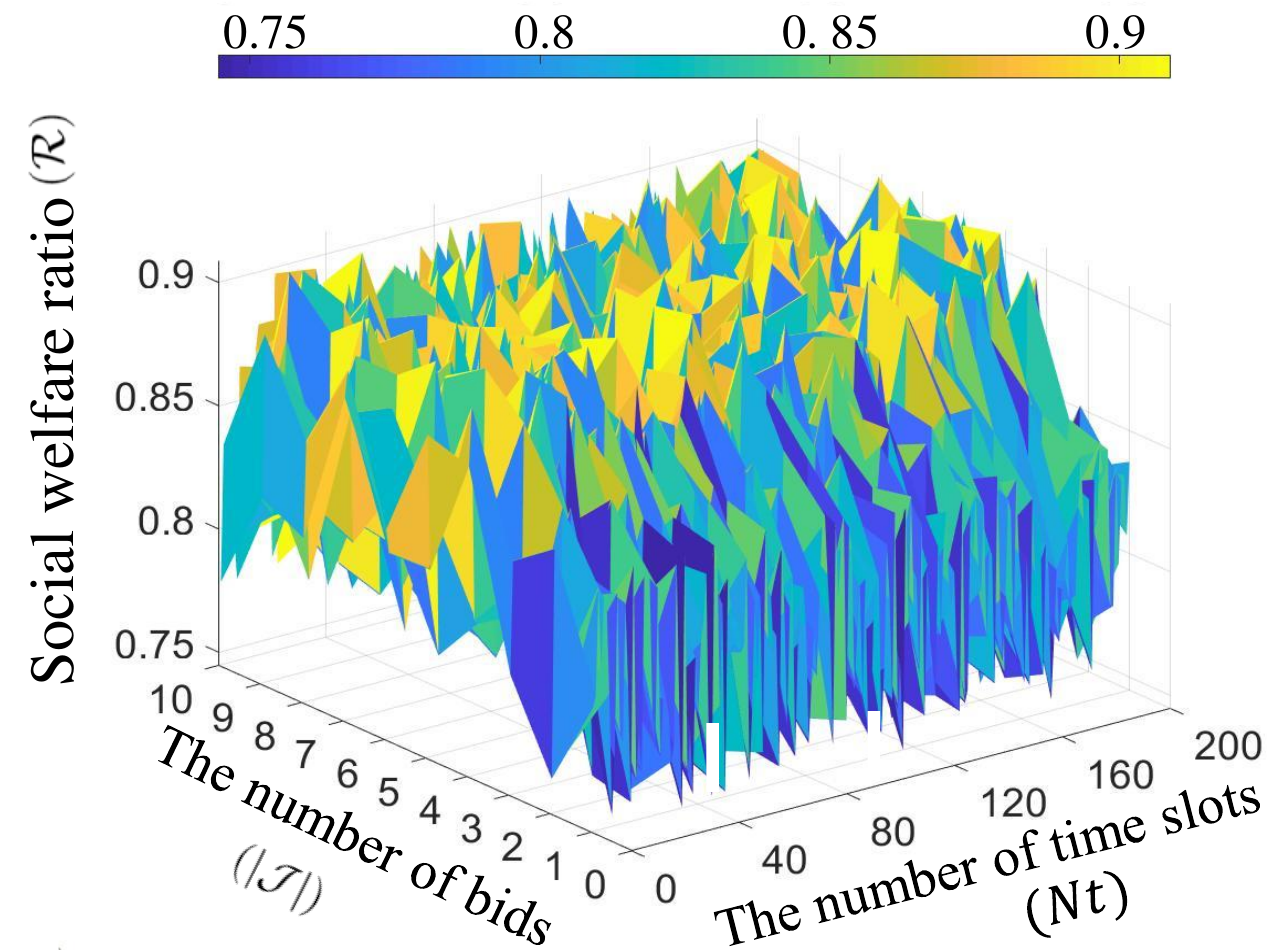}\label{fig13a}}
    \subfloat[$|\mathcal{J}|-\mathcal{R}$]{\includegraphics[width=0.33\textwidth]{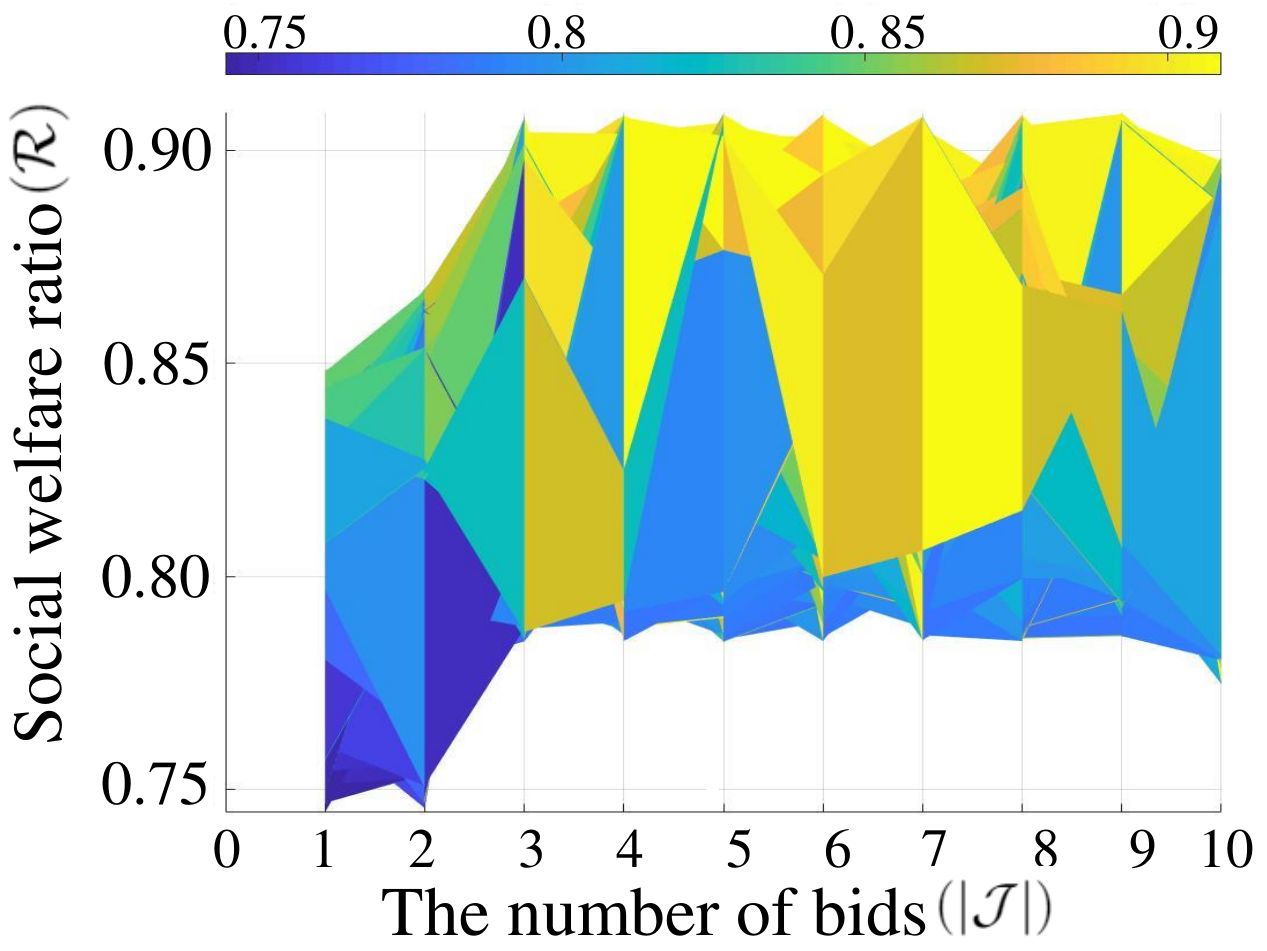}\label{fig13b}}
    \subfloat[$Nt-\mathcal{R}$]{\includegraphics[width=0.33\textwidth]{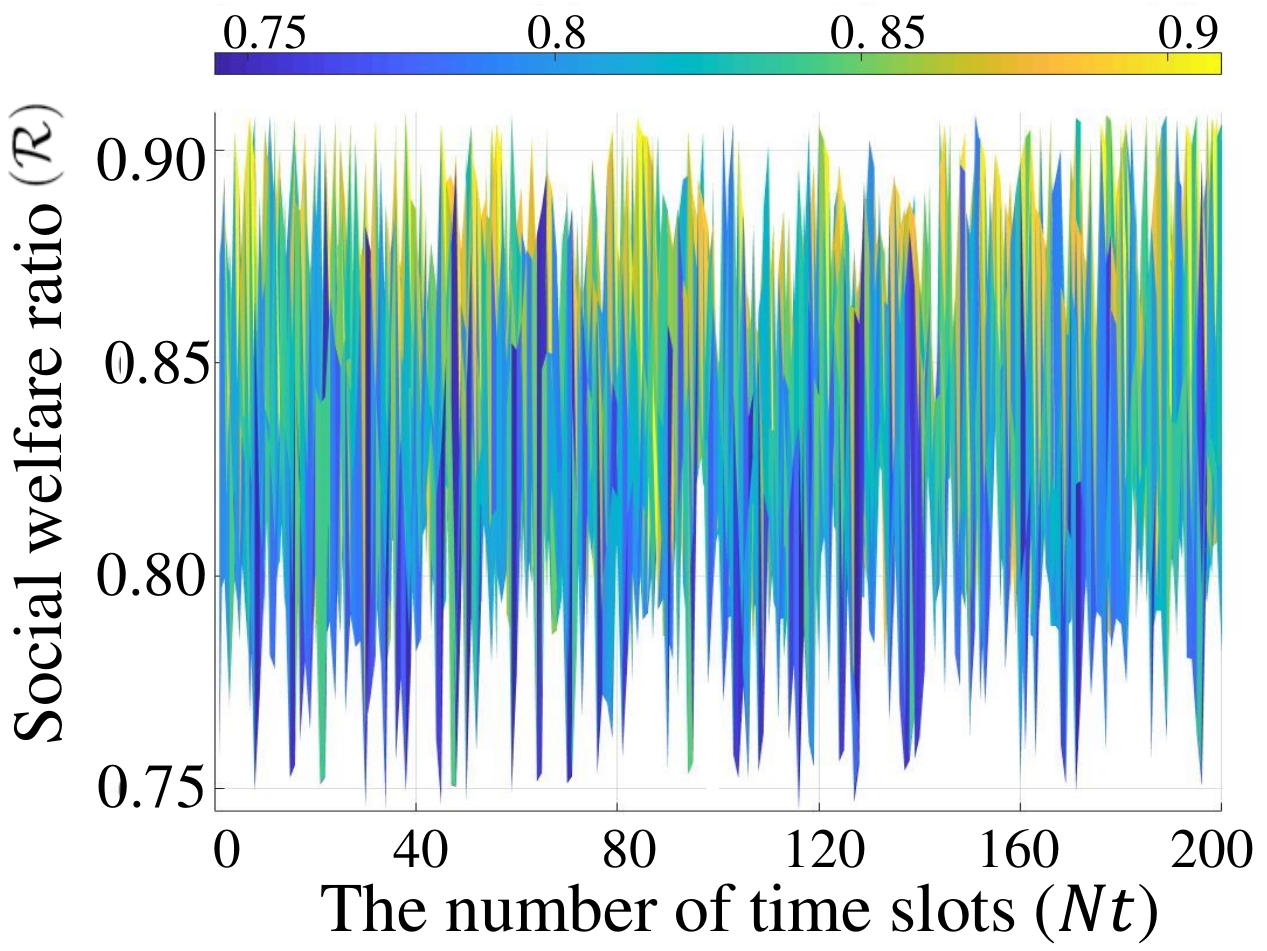}\label{fig13c}}
	\caption{Social welfare ratio ($\mathcal{R}$), the number of bids ($|\mathcal{J}|$) and the number of time slots ($Nt$) in  PAYG mechanism}\label{fig13}
\end{figure*}

In the PAYG mechanism, the number of bids ($|\mathcal{J}|$) is assumed to be comprised between $1\sim 10$, and the travel demand at each time slot is given in \T\ref{T7}. \Fig \ref{fig12a} shows the relationship among the value of the competitive ratio ($\Theta$), the number of bids ($|\mathcal{J}|$) and the number of time slots ($Nt$). \Fig \ref{fig12b} shows that if $|\mathcal{J}|=1$, the value of $\Theta$ is within $0.54511 \sim 0.62145 $; if $|\mathcal{J}|=2$, the value of $\Theta$ is within $0.60109 \sim 0.63128$; whereas if $|\mathcal{J}|=3,4,\cdots,10$, the value of $\Theta$ is within $0.61565 \sim 0.63128$. \Fig \ref{fig12c} shows that competitive ratio ($\Theta$) is independent on the number of time slots ($Nt$). \Fig \ref{fig13a} shows the relationship among social welfare  ratio ($\mathcal{R}$), the number of bids ($|\mathcal{J}|$) and the number of time slots ($Nt$). \Fig \ref{fig13b} shows that if $|\mathcal{J}|=1$, the value of $\mathcal{R} $ is within $0.74451\sim 0.84882$, if $|\mathcal{J}|=2$, the value of $\mathcal{R}$ is within $0.74125\sim 0.86158$, whereas if $|\mathcal{J}|=3,4,\cdots,10$, the value of $\mathcal{R}$ is within $0.78451 \sim 0.90882$. \Fig \ref{fig13c} shows that the value of social welfare ratio ($\mathcal{R}$) is independent on the number of time slots ($Nt$). \Fig \ref{fig12} and \Fig \ref{fig13} show that the competitive ratio ($\Theta$) given in \textcolor{Cerulean}{Proposition} \ref{theorem2} can always provide a lower bound for the social welfare ratio ($\mathcal{R}$) under different size of input time sequence. The gap between $\mathcal{R}$ and $\Theta$ is within $0.19614\sim 0.27754$.
\begin{figure*}[ht]
	\subfloat[$|\mathcal{J}|-Nt-\Theta$]{\includegraphics[width=0.33\textwidth]{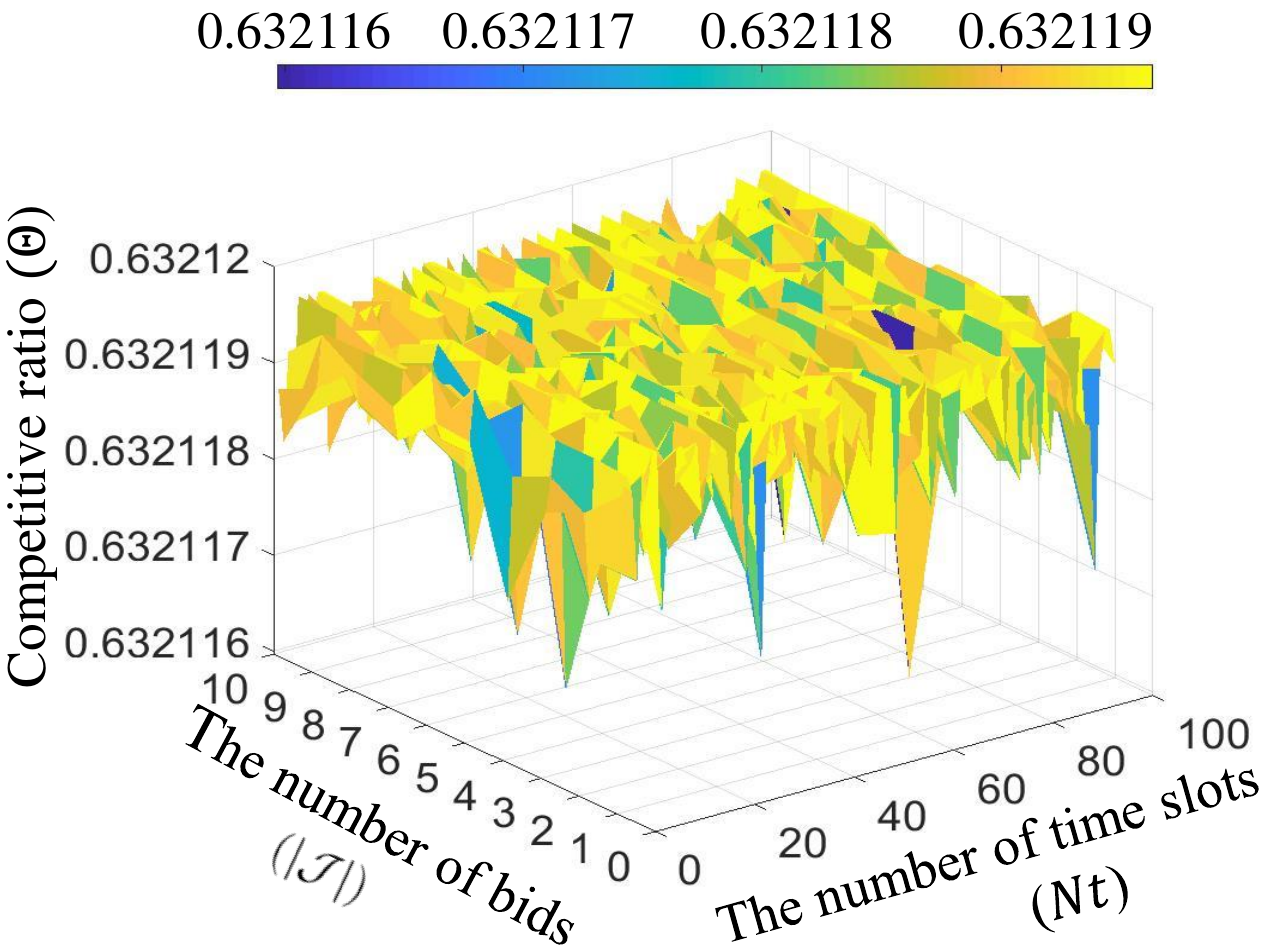}\label{fig14a}}
    \subfloat[$|\mathcal{J}|-\Theta$]{\includegraphics[width=0.33\textwidth]{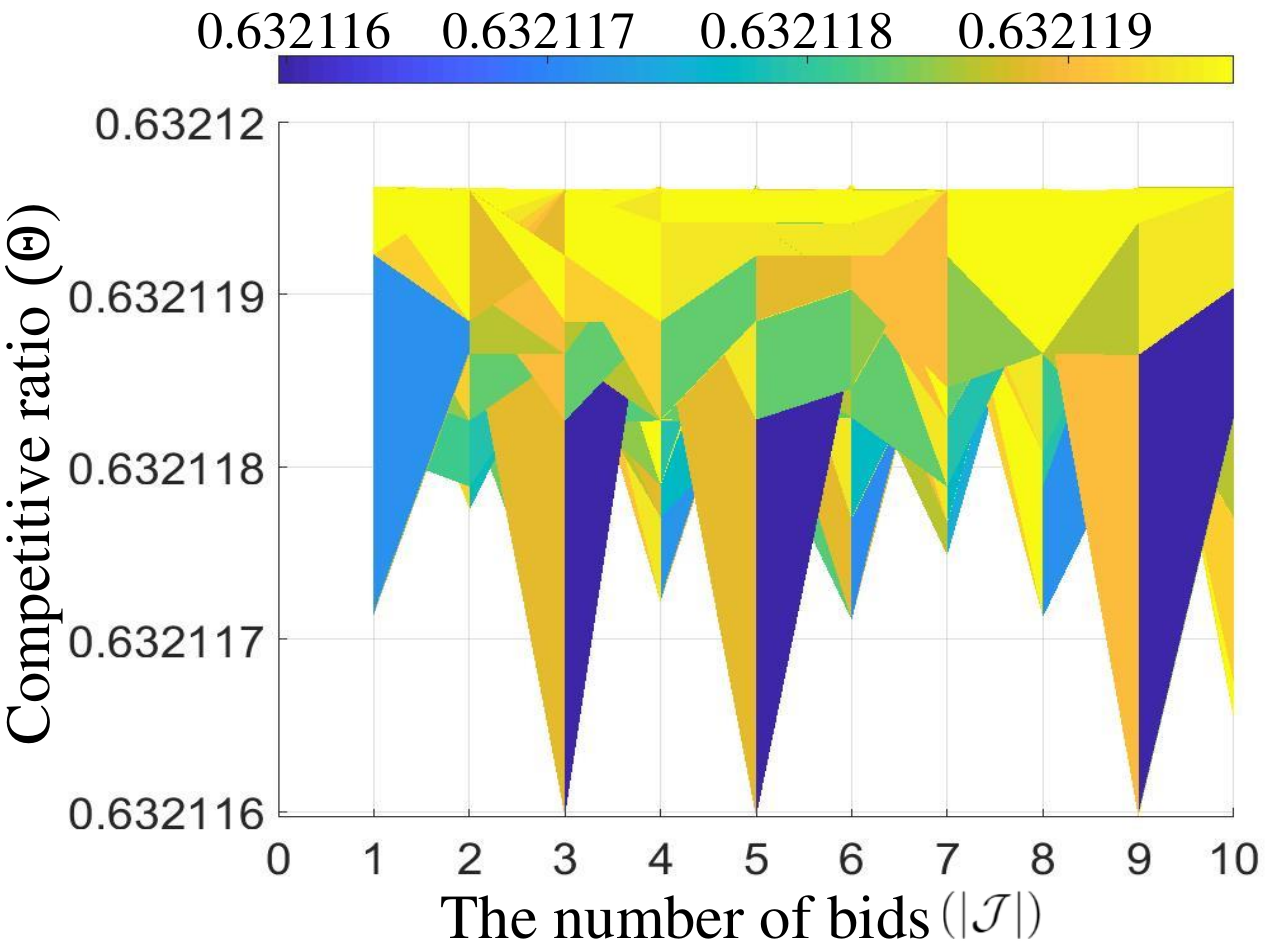}\label{fig14b}}
    \subfloat[$Nt-\Theta$]{\includegraphics[width=0.33\textwidth]{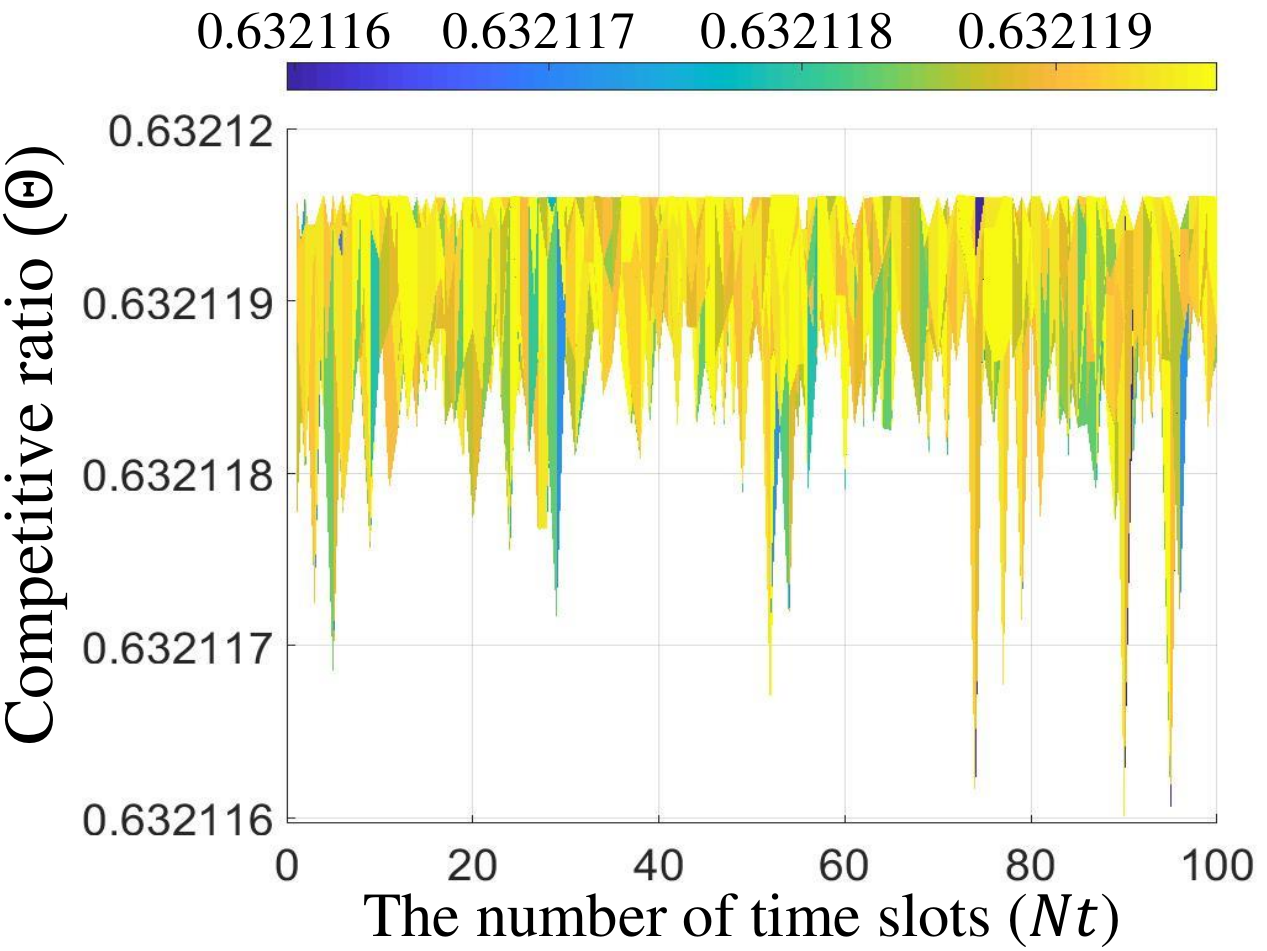}\label{fig14c}}
	\caption{Competitive ratio ($\Theta$), the number of bids ($|\mathcal{J}|$) and the number of time slots ($Nt$) in PAAP mechanism}\label{fig14}
\end{figure*}
\begin{figure*}[ht!]
	\subfloat[$|\mathcal{J}|-Nt-\mathcal{R}$]{\includegraphics[width=0.33\textwidth]{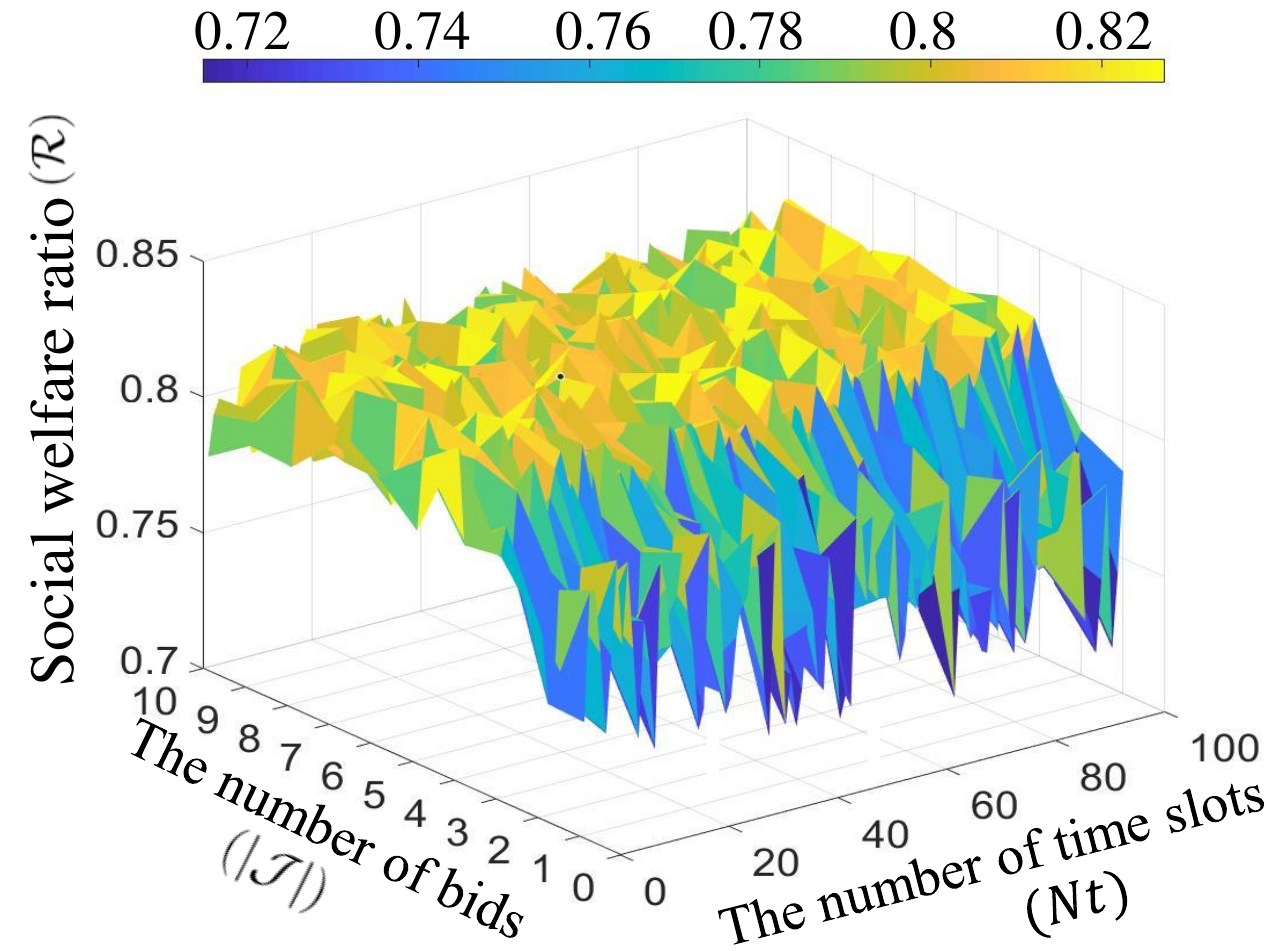}\label{fig15a}}
    \subfloat[$|\mathcal{J}|-\mathcal{R}$]{\includegraphics[width=0.33\textwidth]{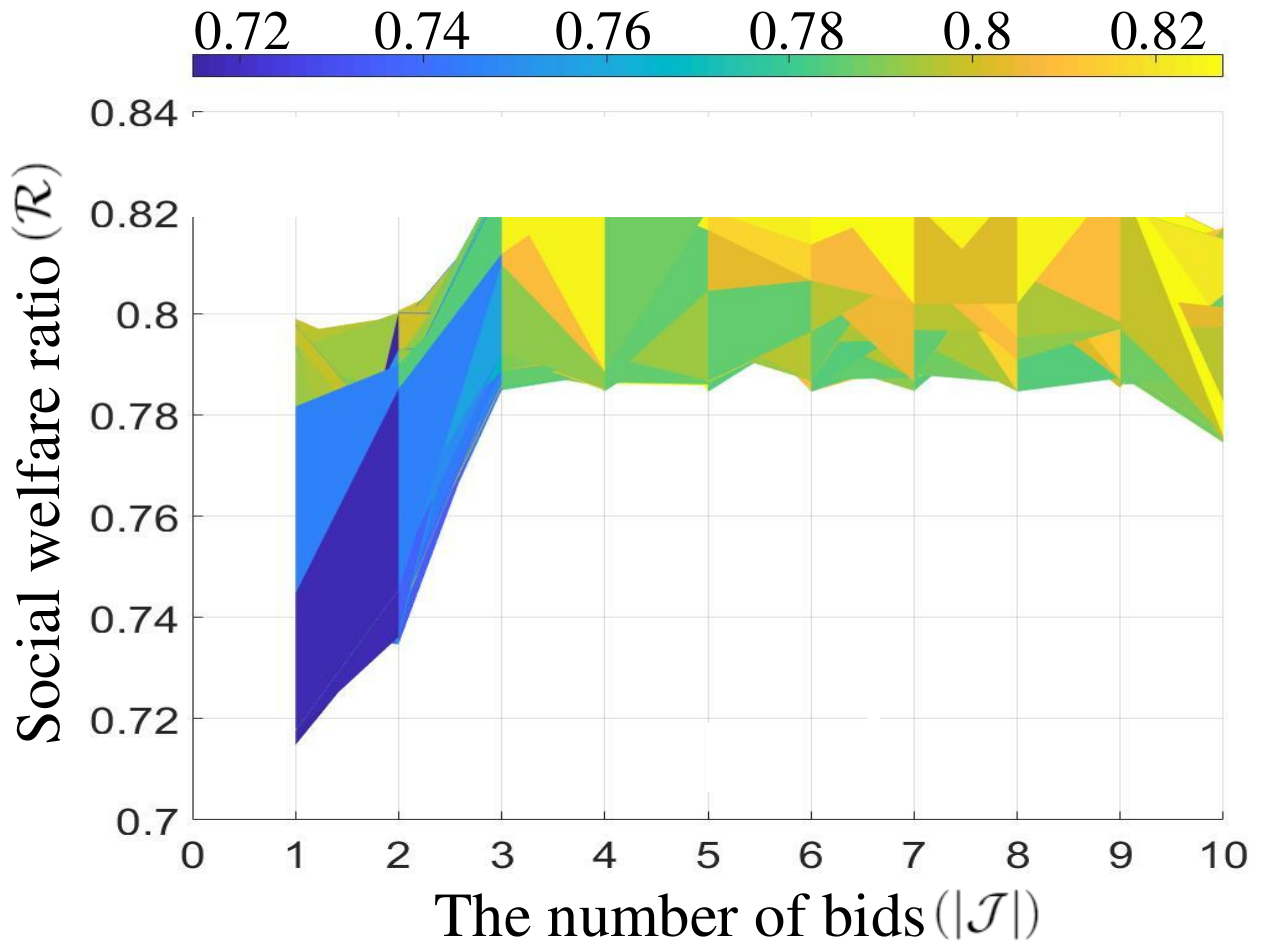}\label{fig15b}}
    \subfloat[$Nt-\mathcal{R}$]{\includegraphics[width=0.33\textwidth]{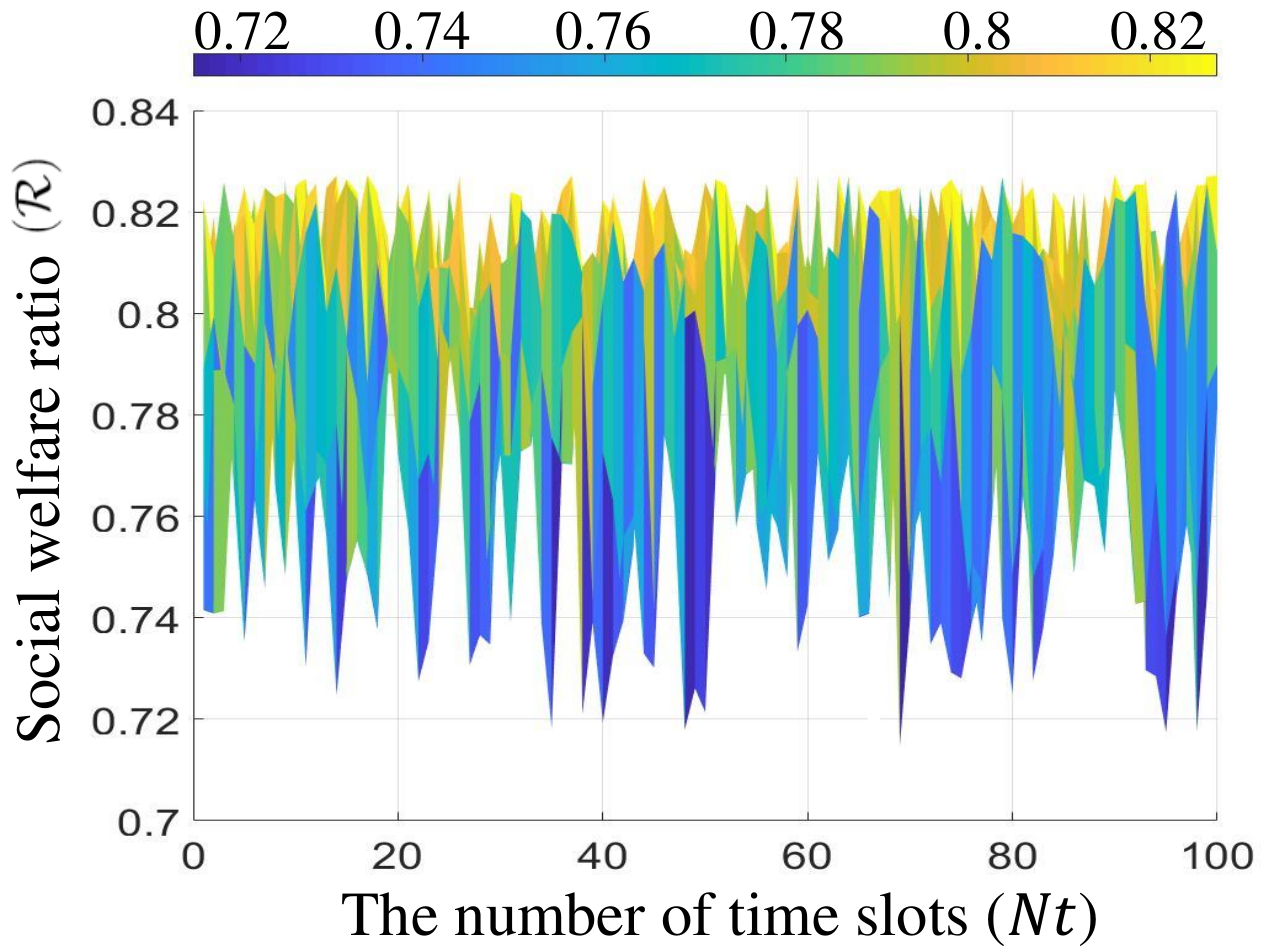}\label{fig15c}}
	\caption{Social welfare  ratio ($\mathcal{R}$), the number of bids ($|\mathcal{J}|$) and the number of time slots ($Nt$) in  PAAP mechanism}\label{fig15}
\end{figure*}

In the PAAP mechanism, \Fig \ref{fig14a} shows the relationship among $\mathcal{R}$, $|\mathcal{J}|$ and $Nt$. \Fig \ref{fig14b} shows that if $|\mathcal{J}|=1$, the value of $\Theta$ is within $0.632117 \sim 0.632120$; if $|\mathcal{J}|=2,3,\cdots,10$, the value of $\Theta$ is within $0.632116 \sim 0.632120$. \Fig\ref{fig14b} and \Fig\ref{fig14c} show that the value of $\Theta$ is independent on $|\mathcal{J}|$ and $Nt$. \Fig \ref{fig15b} shows that if $|\mathcal{J}|=1$, the value of $\mathcal{R}$ is within $ 0.71451\sim 0.80123$; if $|\mathcal{J}|=2$, the value of $\mathcal{R}$ is within $ 0.73412\sim 0.80123$; whereas if $|\mathcal{J}|=3,4,\cdots,10$, the value of $\mathcal{R}$ is within $0.78321 \sim 0.82742$. \Fig\ref{fig15c} shows that the value of $\mathcal{R}$ is independent on $Nt$. \Fig \ref{fig14} and \Fig \ref{fig15} verify the derived competitive ratio ($\Theta$) given in \textcolor{Cerulean}{Proposition} \ref{theorem4}. The gap between $\mathcal{R}$ and $\Theta$  is within $0.082394\sim 0.1953$.

\subsubsection{Impact of booking flexibility}
\label{724}
In this subsection, we investigate the impact of booking flexibility in the MaaS system by comparing two types of rolling horizon configurations with the same time step and different time horizon lengths in the context of the PAYG mechanism: RHA ($\Delta t=1$, $\mathcal{T}=1$) and RHA ($\Delta t=1$, $\mathcal{T}=240$). The input data is the same for both configurations. Considering the booking feasibility in RHA ($\Delta t=1$, $\mathcal{T}=240$), we simulate users' booking behavior and set each user's requested departure time ($O_{i}$)  as a random number satisfying a symmetric triangular distribution within [$t$, $t+240$].  

\Fig \ref{fig16a} shows the relationship among social welfare ($W$), the number of bids ($|\mathcal{J}|$) and time slot ($t$) under RHA ($\Delta t=1$, $\mathcal{T}=1$) and $|\mathcal{J}|$ has little influence on $W$. \Fig \ref{fig16b} and \Fig \ref{fig16c} are projection plots of $t-|\mathcal{J}|-W$ figure under  RHA ($\Delta t=1$, $\mathcal{T}=1$) and RHA ($\Delta t=1$, $\mathcal{T}=240$), respectively. Compared with \Fig \ref{fig16b}, \Fig \ref{fig16c} show that the social welfare during non-peak hours (time slot 0$\sim$240) starts to increase in RHA ($\Delta t=1$, $\mathcal{T}=240$) configuration. Since the total travel demand remains unchanged, the social welfare increases during non-peak hours and decreases during peak hours. Namely, setting a longer time horizon lengths ($\mathcal{T}$) can improve users' booking flexibility and balance the social welfare between peak hours and non-peak hours. 
\begin{figure*}[ht!]
	\subfloat[$t-|\mathcal{J}|-W$ with RHA ($\mathcal{T} =1$)]{\includegraphics[width=0.33\textwidth]{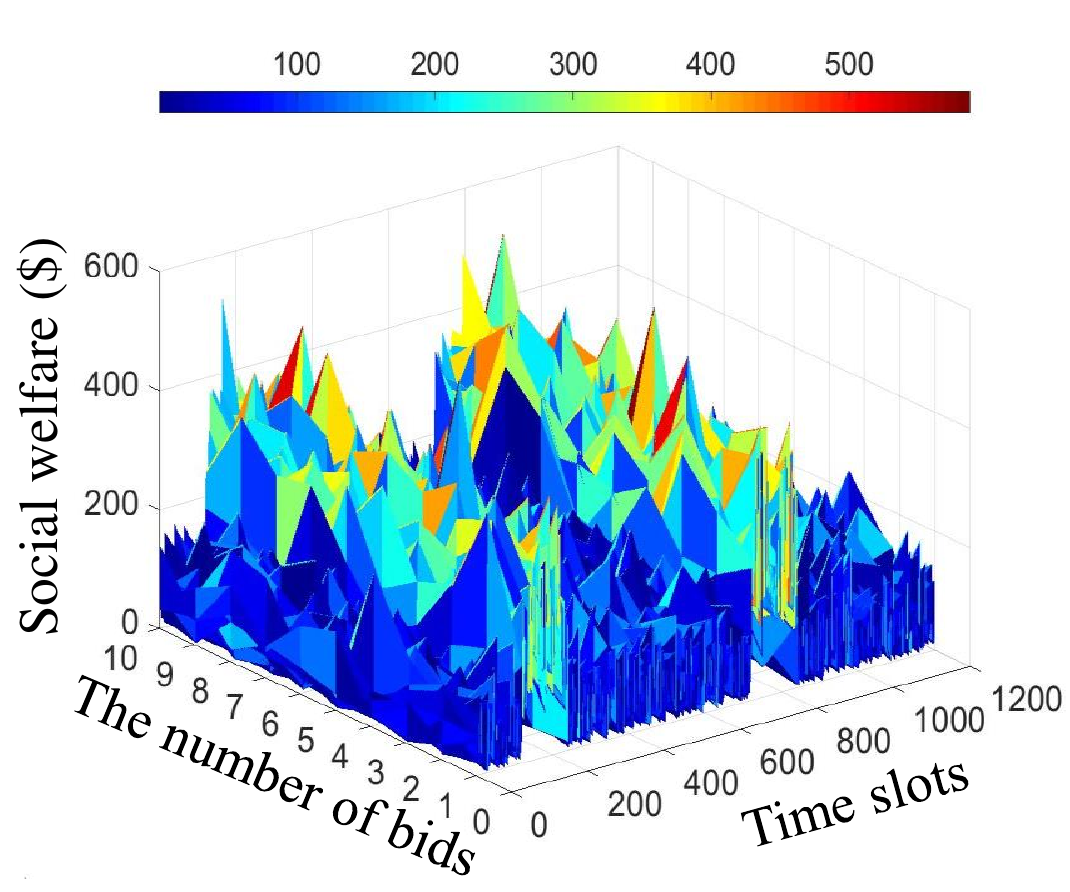}\label{fig16a}}
    \subfloat[$t-W$ with RHA ($\mathcal{T} =1)$]{\includegraphics[width=0.33\textwidth]{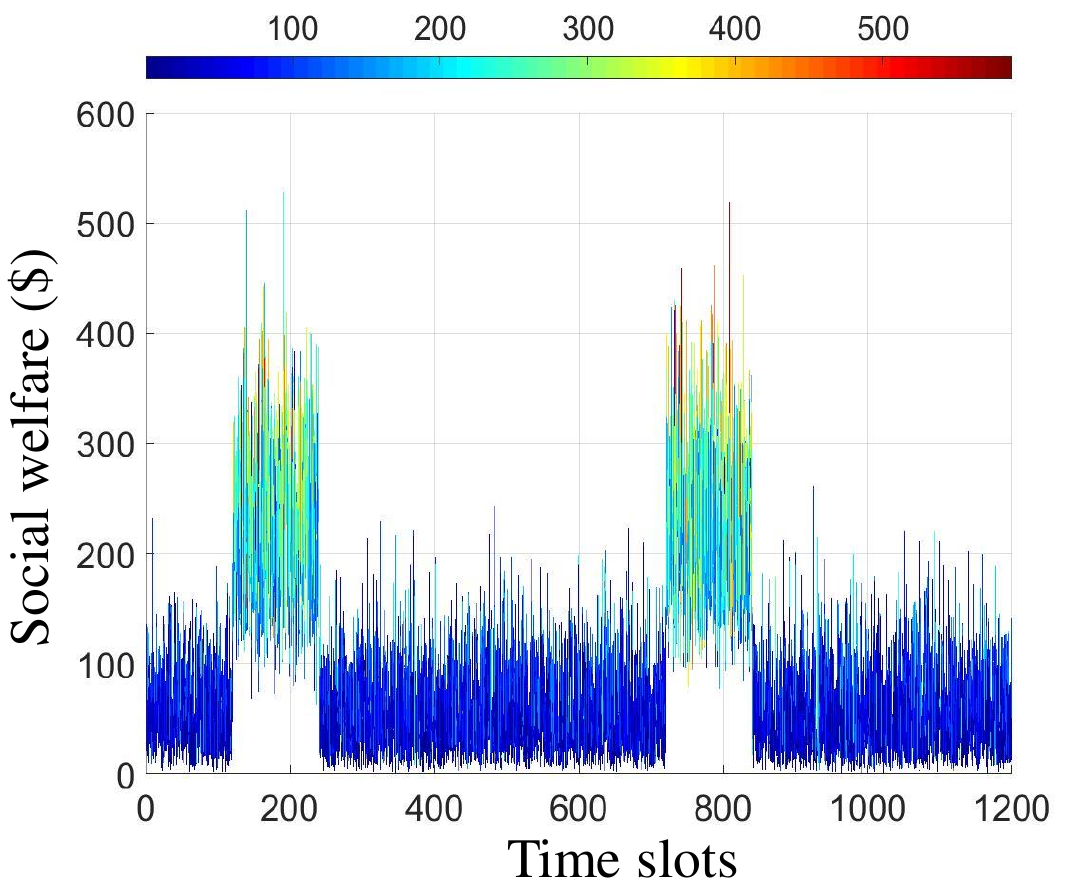}\label{fig16b}}
    \subfloat[$t-W$ with RHA ($\mathcal{T} =240)$]{\includegraphics[width=0.33\textwidth]{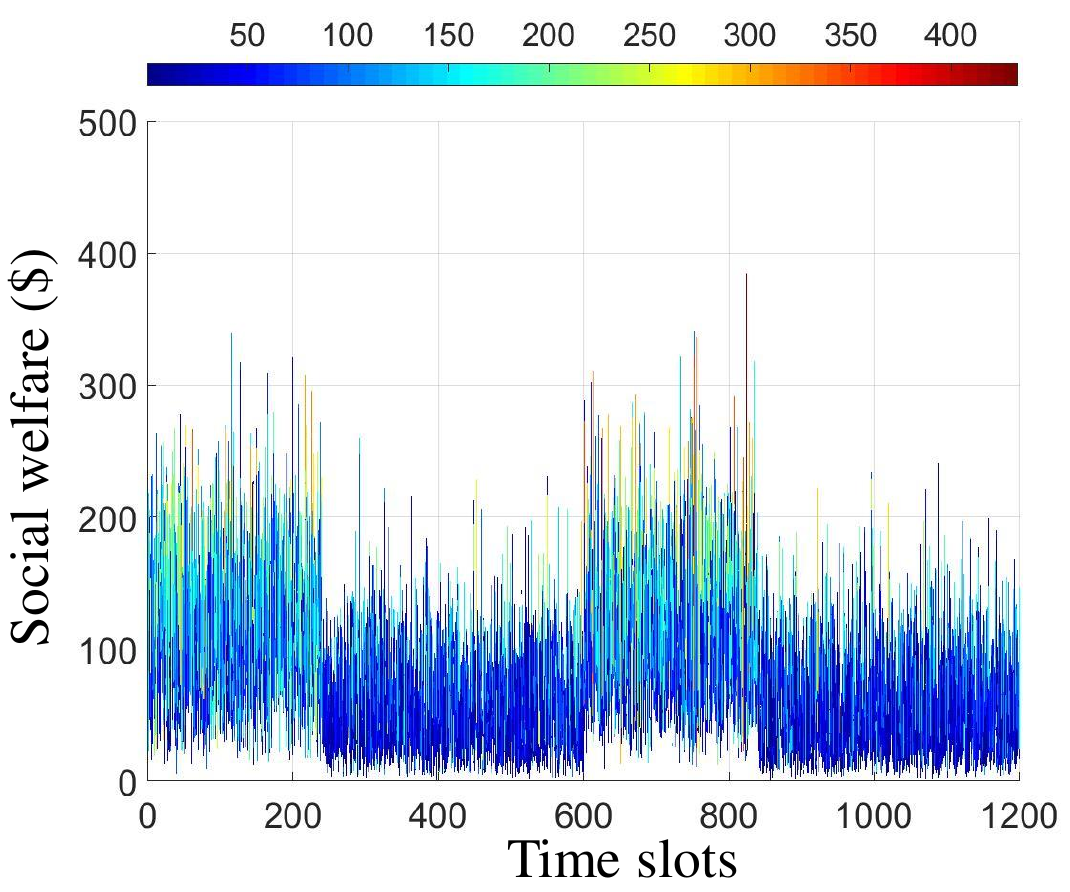}\label{fig16c}}
	\caption{RHA ($\Delta t$=1,$\mathcal{T}$=1 ) and RHA ($\Delta t$=1,$\mathcal{T}$=240)}\label{fig16}
\end{figure*}

\subsubsection{Comparison of rolling horizon algorithm configurations}
\label{725}
In this section, we conduct 100 Monte Carlo simulations to compare the mean value of computation time and social welfare obtained by the proposed online algorithms, online formulations and offline formulations under four types of rolling horizon algorithm (RHA) configurations  introduced in \s\ref{S6}. 

The results for the  PAYG mechanism are reported in \Fig\ref{fig18} based on the following four RHA (SHA) configurations:\\
\noindent \textbf{1})RHA: solve online algorithm (\Alg \ref{alg1}) based on RHA ($\Delta t=1$,$\mathcal{T}=240$);\\
\noindent \textbf{2})RHA: solve online MILP (\textcolor{Cerulean}{Model 1.1}) based on RHA ($\Delta t=1$,$\mathcal{T}=240$);\\
\noindent \textbf{3})RHA: solve offline MILP (\textcolor{Cerulean}{Model 2.1}) based on RHA ($\Delta t=10$, $\mathcal{T}=240$);\\
\noindent \textbf{4})SHA: solve offline MILP (\textcolor{Cerulean}{Model 2.1}) based on SHA ($\Delta t=240$, $\mathcal{T}=240$).\\

\Fig \ref{fig18a} and \Fig \ref{fig18b} show that the computation time of online and offline algorithms increase with the increasing number of time slots ($Nt$). The computational runtime of the offline algorithm increases exponentially with the number of time slots. 
\begin{figure*}[ht!]
\centering
	\subfloat[Computation time ]{\includegraphics[width=0.33\textwidth]{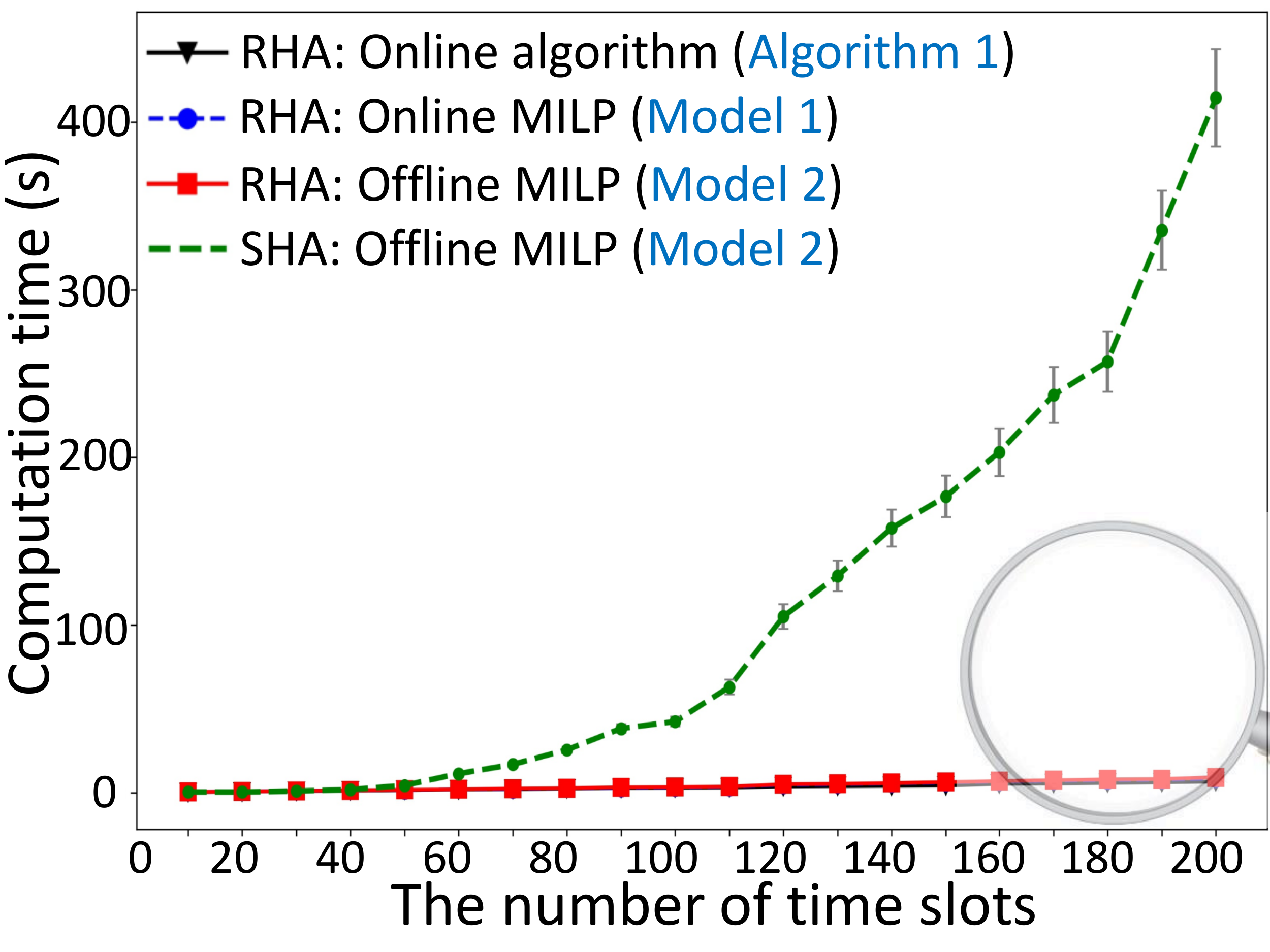}\label{fig18a}} 
    \subfloat[A closer look at online algorithms]{\includegraphics[width=0.33\textwidth]{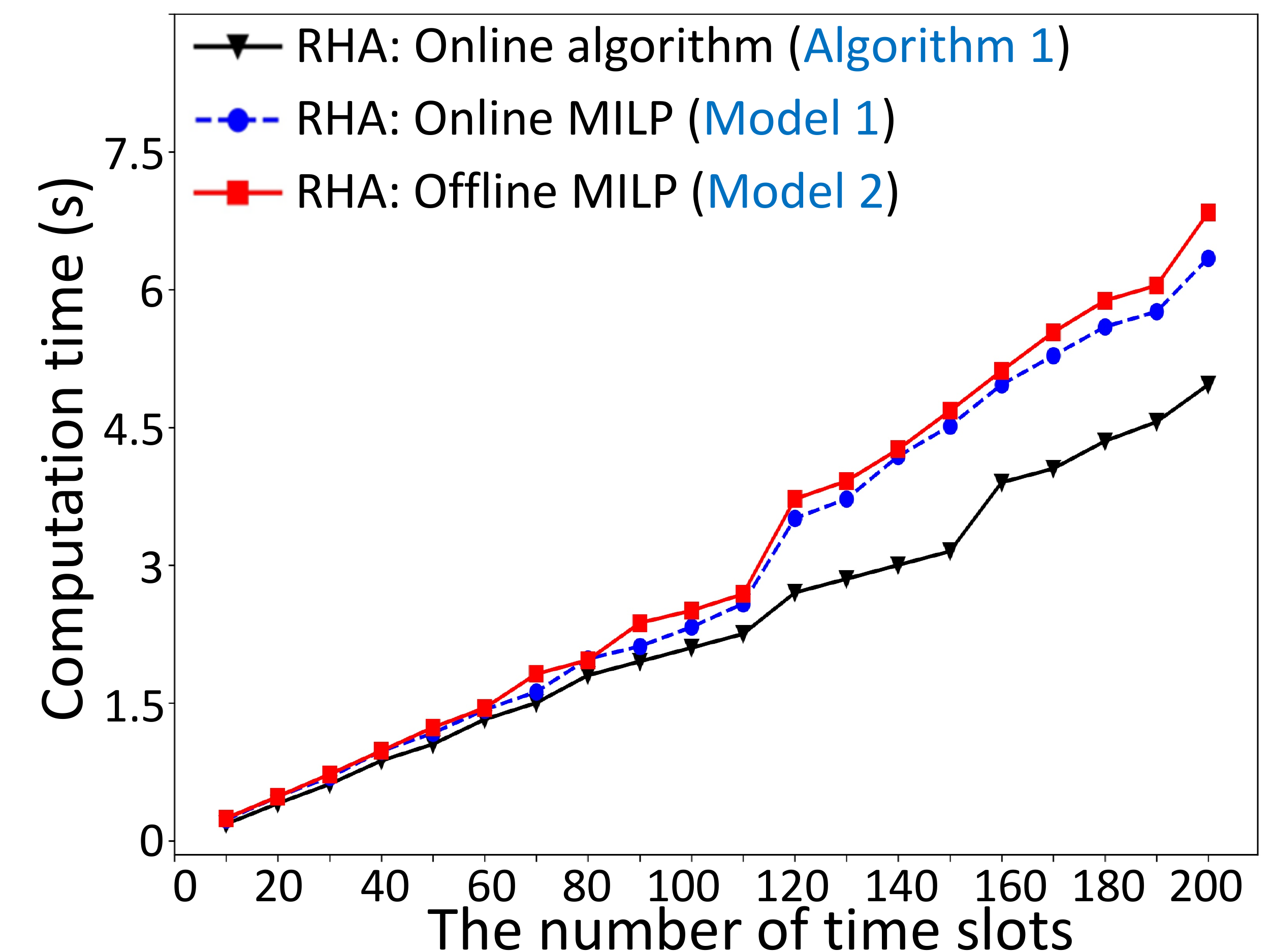}\label{fig18b}}
        \subfloat[Social welfare]{\includegraphics[width=0.33\textwidth]{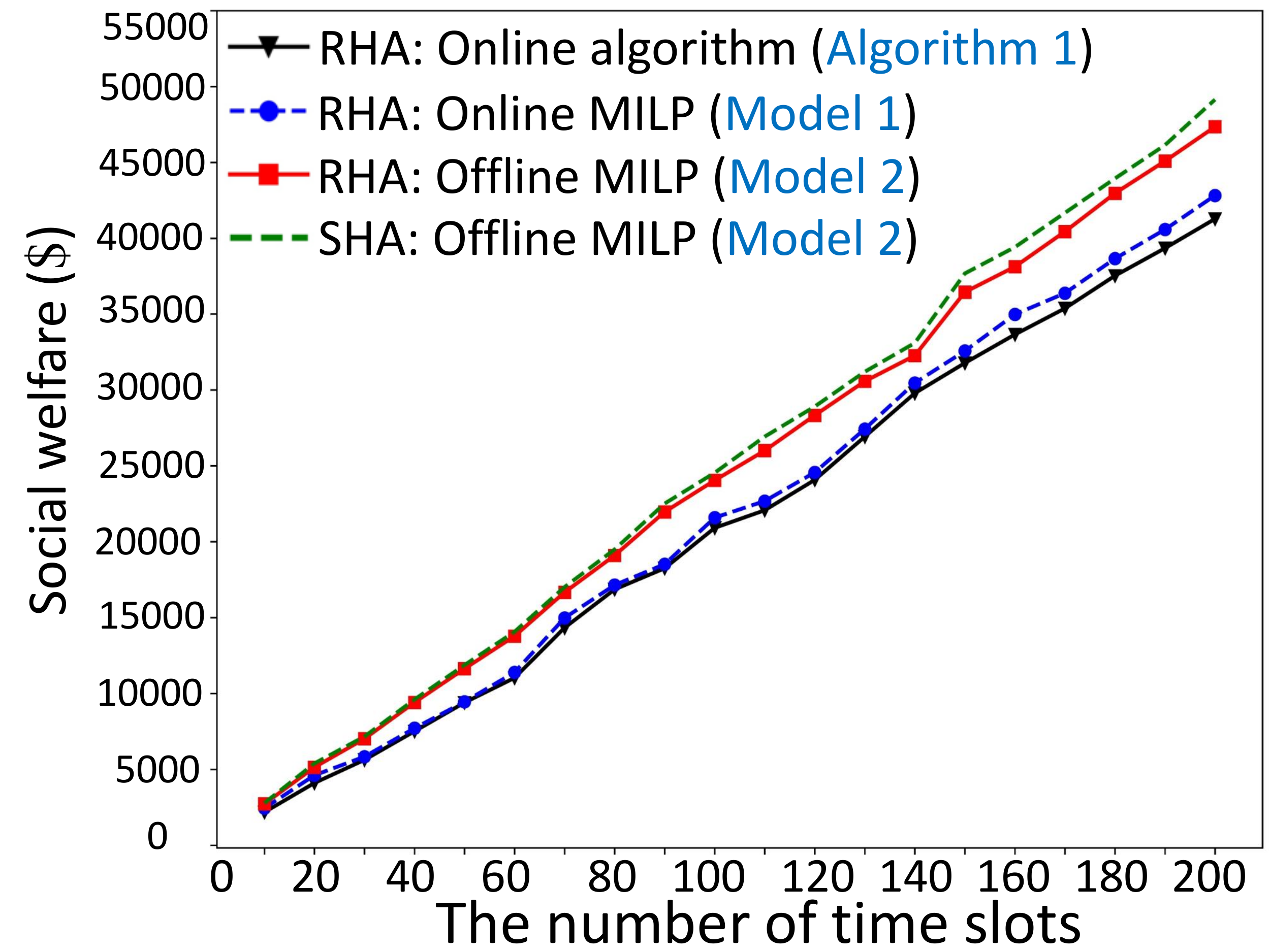}\label{fig18c}}
	\caption{Comparing the computation time and social welfare obtained by different configures in PAYG mechanism}\label{fig18}
\end{figure*}

Analogously, the results for the PAAP mechanism are reported in \Fig\ref{fig19} based on the following four RHA (SHA) configurations:\\
\noindent \textbf{1})RHA: solve online algorithm (\Alg \ref{alg2}) based on RHA ($\Delta t=1$, $\mathcal{T}=100$);\\
\noindent \textbf{2})RHA: solve online MILP (\textcolor{Cerulean}{Model 1.2}) based on RHA ($\Delta t=1$, $\mathcal{T}=100$);\\
\noindent \textbf{3})RHA: solve offline MILP (\textcolor{Cerulean}{Model 2.2}) based on RHA ($\Delta t=10$, $\mathcal{T}=100$);\\
\noindent \textbf{4})SHA: solve offline MILP (\textcolor{Cerulean}{Model 2.2}) based on SHA ($\Delta t=100$, $\mathcal{T}=100$).
\begin{figure*}[ht!]
\centering
	\subfloat[Computation time]{\includegraphics[width=0.33\textwidth]{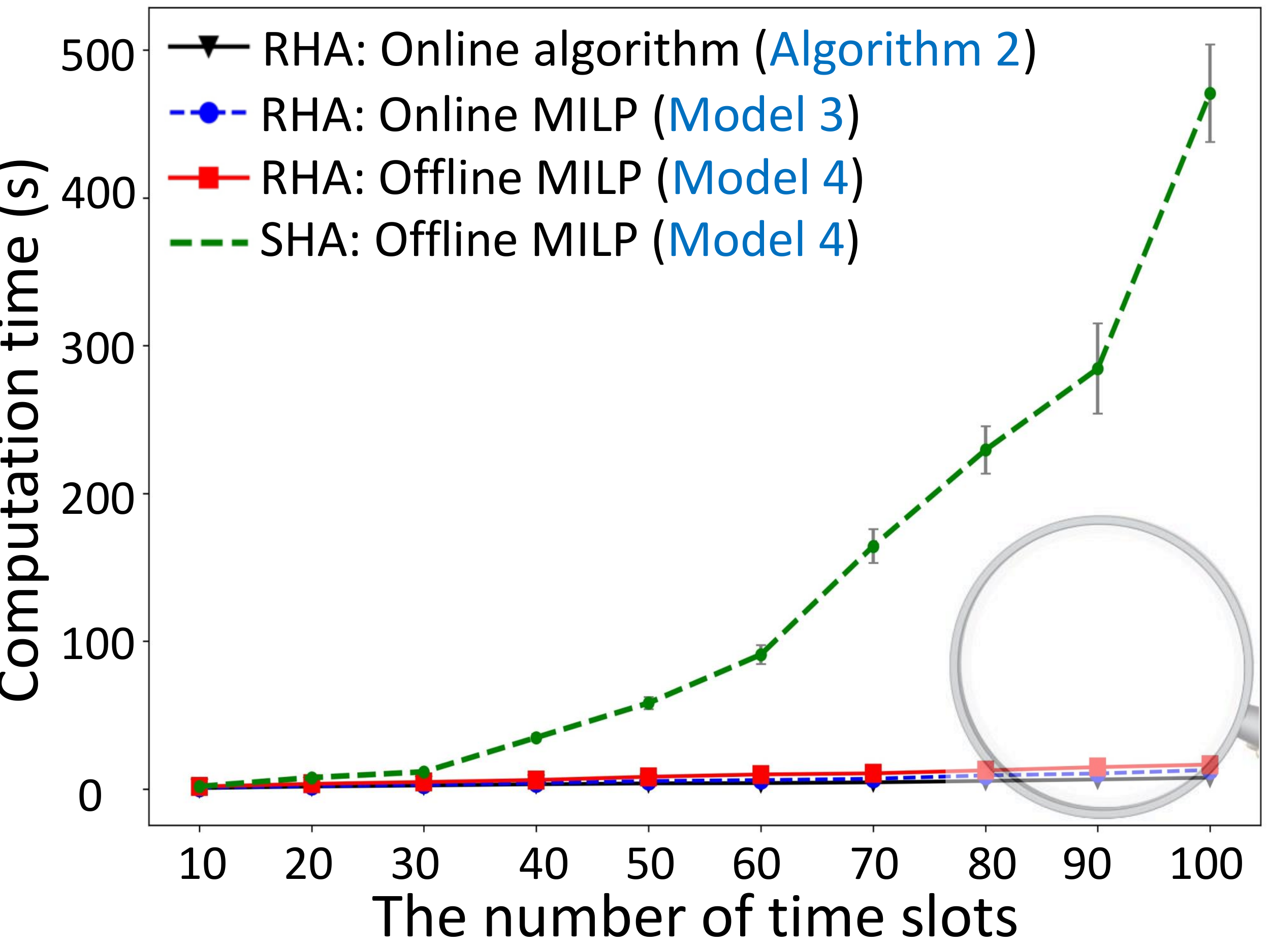}\label{fig19a}} 
    \subfloat[A closer look at  online algorithms]{\includegraphics[width=0.33\textwidth]{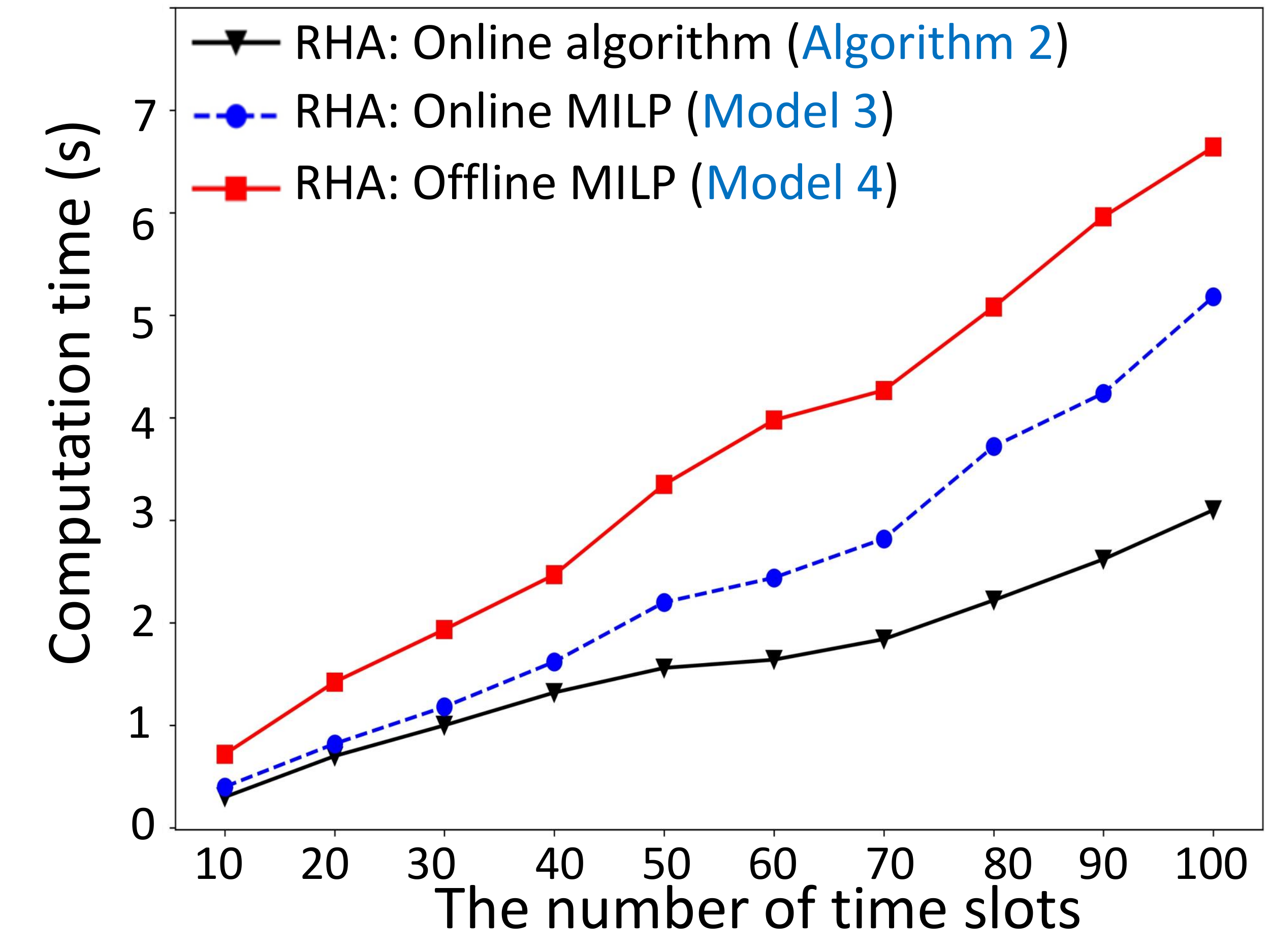}\label{fig19b}}
    \subfloat[ Social welfare]{\includegraphics[width=0.33\textwidth]{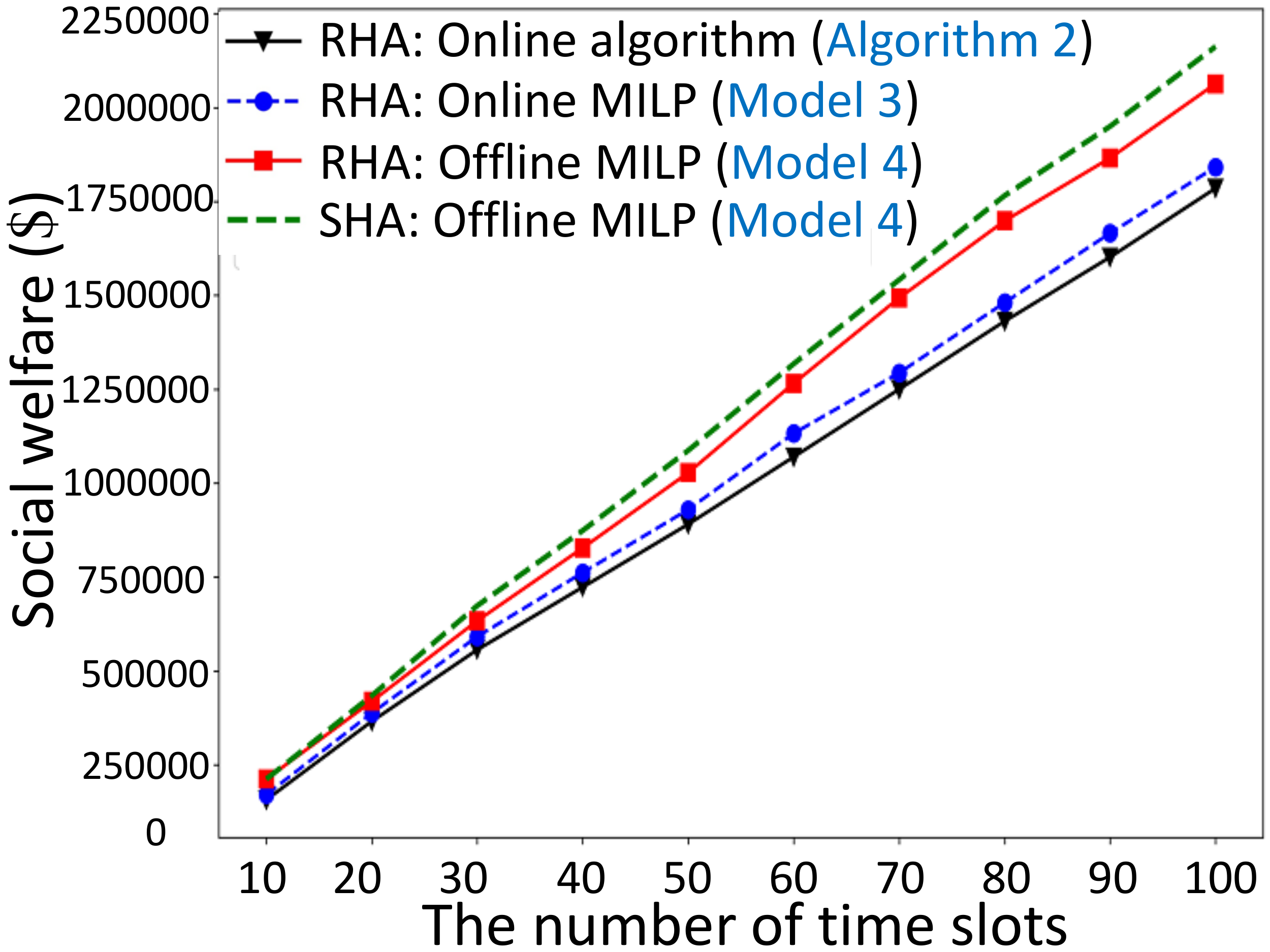}\label{fig19c}}
	\caption{Comparing the computation time and social welfare obtained by different configures in PAAP mechanism}
	\label{fig19}
\end{figure*}\\

\Fig\ref{fig18}  and \Fig\ref{fig19}  display the varying
trade-offs between runtime and solution quality of online and offline configurations. \Fig \ref{fig18a} and \Fig \ref{fig19a} show that the offline configuration runs in exponential time and takes significantly more time than online configurations. \Fig \ref{fig18c} and \Fig \ref{fig19c} show that the social welfare obtained by the offline configurations is considerably larger than that obtained by the online configurations, among which `RHA: offline MILP' can obtain the maximum social welfare. We observe that the social welfare obtained by \Alg \ref{alg1} (resp. \Alg \ref{alg2}) is only marginally lower than that obtained by the online MILP \textcolor{Cerulean}{Model 1.1} (resp. \textcolor{Cerulean}{Model 2.1}) when solved to optimality. In turn, \Alg \ref{alg1} and \Alg \ref{alg2} are faster compared to exact optimization methods. Overall, the rolling horizon configuration ($\Delta t=10, \mathcal{T}=240$) is shown to reduce the gap in social welfare observed between the online and offline algorithms, while retaining substantial computational efficiency. Compared with the optimal solution obtained by `SHA: offline MILP', we are willing to accept a suboptimal solution obtained by `RHA: offline MILP' to ensure that the solution remains feasible and near optimal when the computation time is required to significantly decrease.  

We conducted a sensitivity analysis on the time step ($\Delta t$) of the `RHA: offline MILP' configuration using the PAYG mechanism (the results in reported in \Fig\ref{H22} in the Appendix). Our findings show that when $\Delta t$ under RHA ($\Delta t>0$, $\mathcal{T}=240$) increases from 10 $\sim$ 200, in terms of social welfare, we have `SHA: offline MILP' $>$ `RHA: offline MILP' $>$ `RHA: online MILP' $>$ `RHA: online algorithm', which is consistent with the results shown in \Fig\ref{fig18}.

\section{Conclusion and remarks}
\label{S8}
We first summarize the main contributions of this work before discussing practical issues, and outlining future research directions.

\subsection{Conclusion}
This paper makes three broad methodological contributions. The first is to propose two types of online auction-based MaaS mechanisms (PAYG and PAAP), which can effectively and efficiently allocate mobility resources while accounting for users' preferences and WTP. We show that both mechanisms are incentive-compatible and maximize social welfare. The second is to propose novel mathematical programming formulations for the online mobility resource allocation problems that arise within the PAYG and the PAAP mechanisms. These formulations allow users to bid for any amount of mode-agnostic travel times in the MaaS system which are converted into mobility resources expressed in units of speed-weighted travel distance. Third, we show that these online resource allocation problems can be reformulated as multidimensional knapsack problems and propose polynomial-time, customized primal-dual algorithms with bounded competitive ratios for implementing the PAYG and PAAP mechanisms. 
Our numerical results uncover insights into how the proposed mechanisms behave in relation to the parameters, highlight the performance of the proposed online algorithms, as well as the benefits obtained through the proposed RHA configurations. Comparing the PAYG and the PAAP mechanisms, we find that the PAAP mechanism outperforms the PAYG in terms of improving social welfare and reducing the unit price of mobility resources. The competitive ratio ($\Theta$) of \Alg \ref{alg1} (resp. \Alg \ref{alg2}) can provide a lower bound for the social welfare ratio ($\mathcal{R}$) in the online PAYG (resp. PAAP) mechanism. Further, we find that RHA configurations with larger time horizons can improve users' booking flexibility. We also find that the `RHA: offline MILP' configuration provides a good compromise between solution quality and computational scalability.

\subsection{Remarks and future research}
The global economic transition and state-of-the-art technologies are driving the transformation of the transport sector from an infrastructure/manufacturing focused industry to a service/experience focused industry \citep{AndyHong}. In line with the transition from a focus on `products' to `service' to `user experience', we proposed an innovative MaaS paradigm emphasizing the nature of service nature and user experience. In this paradigm, users can bid for mobility resources in a continuous fashion and expect multi-modal mobility services tailored to their willingness to pay (WTP) and travel requirements. 

\cite{sochor2018topological} proposed a four-level taxonomy to divide different MaaS schemes, current MaaS schemes such as UbiGo have not reached Level 3 (integration of the service offer). In comparison, the proposed MaaS paradigm aims to reach the highest level (integration of societal goals). Although the proposed paradigm may provide several advantages as discussed above, there remains several practical issues to address before such MaaS systems can be deployed. At the micro level, users' habits and attitudes are recognised as essential factors. In the proposed MaaS paradigm, users need to quantify the abstract characters (e.g., inconvenience tolerance and travel delay budget) and report them to the MaaS regulator, which might be difficult to identify in practice. \cite{karlsson2020development} showed that it is difficult to change people's travel behaviour due to the  established habits and the individual's perceived `action space'; thus future research will investigate user adoption and attempt to quantify to which degree are users willing to change their travel habits. At the macro level, \cite{merkert2020collaboration} identified the importance of system integration and the elimination of the influence of boundary effects on different modes in a system, and indicated that the combined operation of MaaS systems across public and private travel options may increase the pressure upon TSPs to provide multimodal and seamless services. Thus the potential collaboration different stakeholders in the MaaS system need to be further investigated.

This paper has taken a first step towards designing mechanisms for the operation of MaaS systems. However, market dynamics and other complex factors in MaaS systems are not accounted for in the proposed approach motivating future research in this direction. In particular, two-sided economic deregulated MaaS markets, or hierarchical configurations such as Stackelberg competition between TSPs and users may provide more realistic configurations for the development of emerging MaaS ecosystems.

\section*{Acknowledgments}
The authors wish to express their thanks to Dr. Haris Aziz from the University of New South Wales and Prof. David Hensher from the University of Sydney for their useful suggestions. This research was partially supported by the Australian Government through the Australian Research Council's Discovery Projects funding scheme (DP190102873). Dr. Wei Liu acknowledges the support from Australian Research Council through the Discovery Early Career Researcher Award (DE200101793).

\appendix
\section{Mathematical notations}
\label{parameters}
\begin{center}
\setlength{\abovecaptionskip}{0pt}
\setlength{\belowcaptionskip}{0pt}
\centering
\footnotesize
\begin{longtable}{ll}
\toprule
\multicolumn{2}{l}{Variables}\\
\hline $l_{ij}^m$  & Real variable denoting the the travel time served by mode $m$ in the MaaS bundle for user $i$'s bid $j$\\
$p_{t}$ &  Parameter/real variable denoting the unit price at time slot $t$ in the online/offline resource allocation problem\\
$p_{ij}^{t}$ & Real variable denoting the actual payment of user $i$'s bid $j$ at time slot $t$\\
\multirow{2}{*} {$x_{ij }$} & Binary variable denoting whether user $i$'s bid $j$ is accepted (PAYG)\\
 & Continuous variable denoting the fraction of resources  allocated to user $i$'s bid $j$ (PAAP)\\
\multirow{2}{*} {$x_{ij}^{t}$} & Binary variable denoting whether user $i$'s bid $j$ is accepted at time slot $t$ (PAYG)\\
& Continuous variable denoting the fraction allocated to user $i$'s bid $j$ at time slot $t$ (PAAP)\\
\multirow{2}{*}{$\chi_{i,s}$} &Binary variable denoting whether MaaS bundle $s$ is allocated to user $i$ (PAYG)\\
& Real variable denoting the fraction  allocated to user $i$'s MaaS bundle $s$ (PAAP)\\
\multirow{2}{*}{$\chi_{i,s}^{t}$} &Binary variable denoting whether MaaS bundle  $s$ is allocated to user $i$ at time slot $t$ (PAYG)\\
 & Real variable denoting the fraction  allocated to user $i$'s MaaS bundle $s$ at time slot $t$ (PAAP)\\
\hline \multicolumn{2}{l}{Parameters}\\
\hline 
$A_{t}$ & The weighted quantity of available mobility resources at time slot $t$\\
$b_{ij}$ & Bidding price of user $i$'s bid $j$ \\
$b_{i,s}$ & Bidding price of user $i$'s MaaS bundle $s$ \\
$C$& The weighted capacity of the mobility resources in each time slot\\
$D_{i}$ & user $i$'s shortest travelling distance arranged by MaaS regulator based on Origin destination information.\\
$N_{i}$ & The total  travel time slots of user $i$'s  MaaS bundle (PAYG)\\
$L_{i}$ & user $i$'s pre-defined  time period (slots) for the MaaS package bundle (PAAP)\\
$O_{i}$ & user $i$'s requested departure time in  PAYG mechanism and  start date in the PAAP mechanism \\
$p_{ij}$ & The actual payment of user $i$'s bid $j$, which is a constant in each time slot\\
$Q_{ij}$ & speed-weighted travel distance: distance weighted by  user $i$'s requested speed for bid $j$\\
$Q_{i,s}$ & speed-weighted travel distance of user $i$'s MaaS bundle $s$\\
$T_{ij}$ & The requested travel time of user $i$'s bid $j$ \\
$v_{m}$ & Average commercial speed of transport mode $m$ \\
$\Gamma_{i}$ &The maximum inconvenience degrees that user $i$ can tolerate during a service\\
$\sigma_{m}$ & inconvenience cost per unit of time for travel mode $m$\\
$\Phi_{i}$ & user $i$'s travel delay budget: maximum delay that user $i$ can accept during a service\\
\bottomrule
\end{longtable}
\label{A}
\end{center}
\section{Sensitivity analysis on the time step in terms of social welfare}
\label{EE}
\begin{figure*}[ht!]
\subfloat[$\Delta t=10$]{\includegraphics[width=0.3\textwidth]{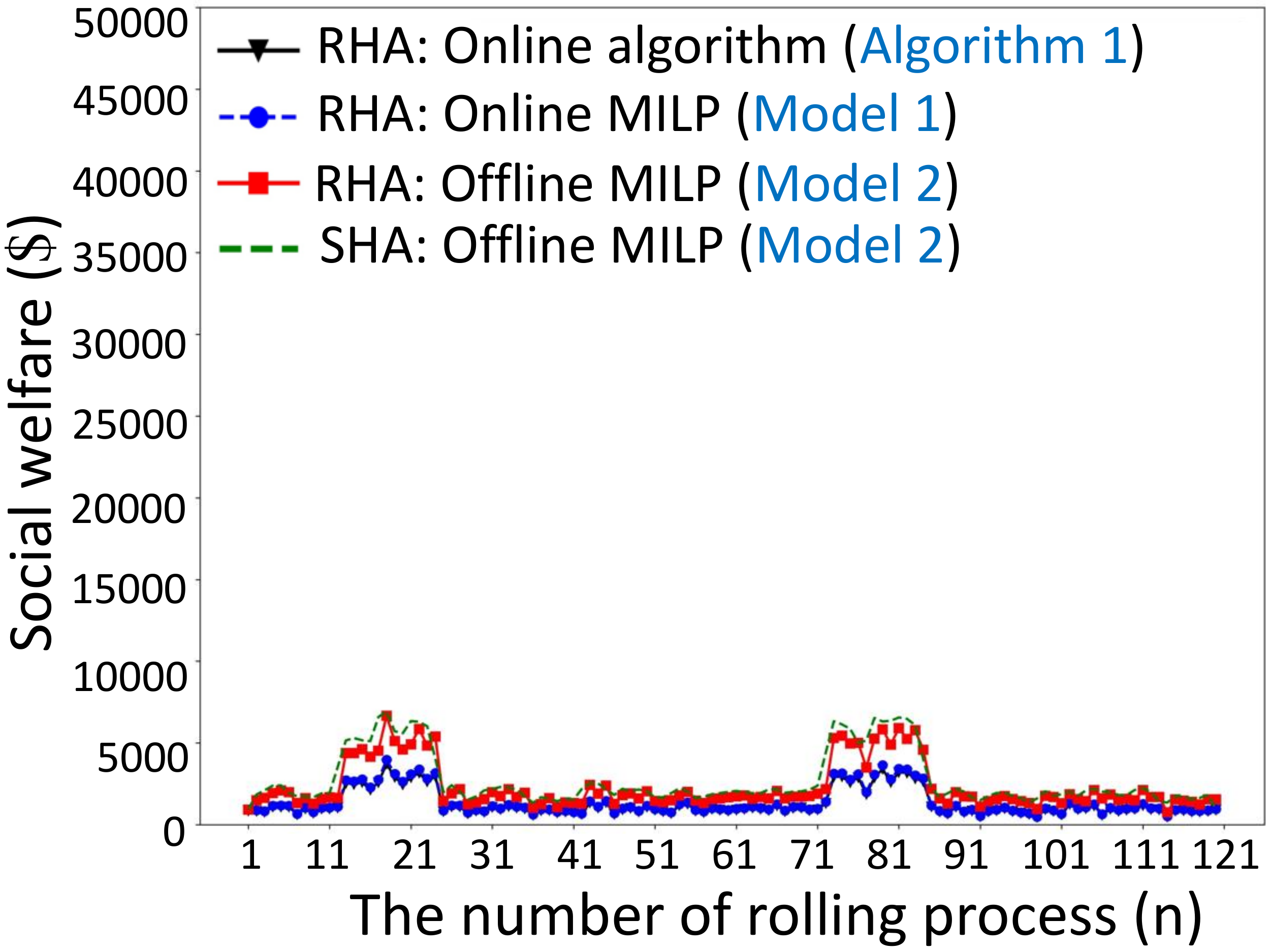}\label{Nt1}}
\subfloat[$\Delta t=30$]{\includegraphics[width=0.3\textwidth]{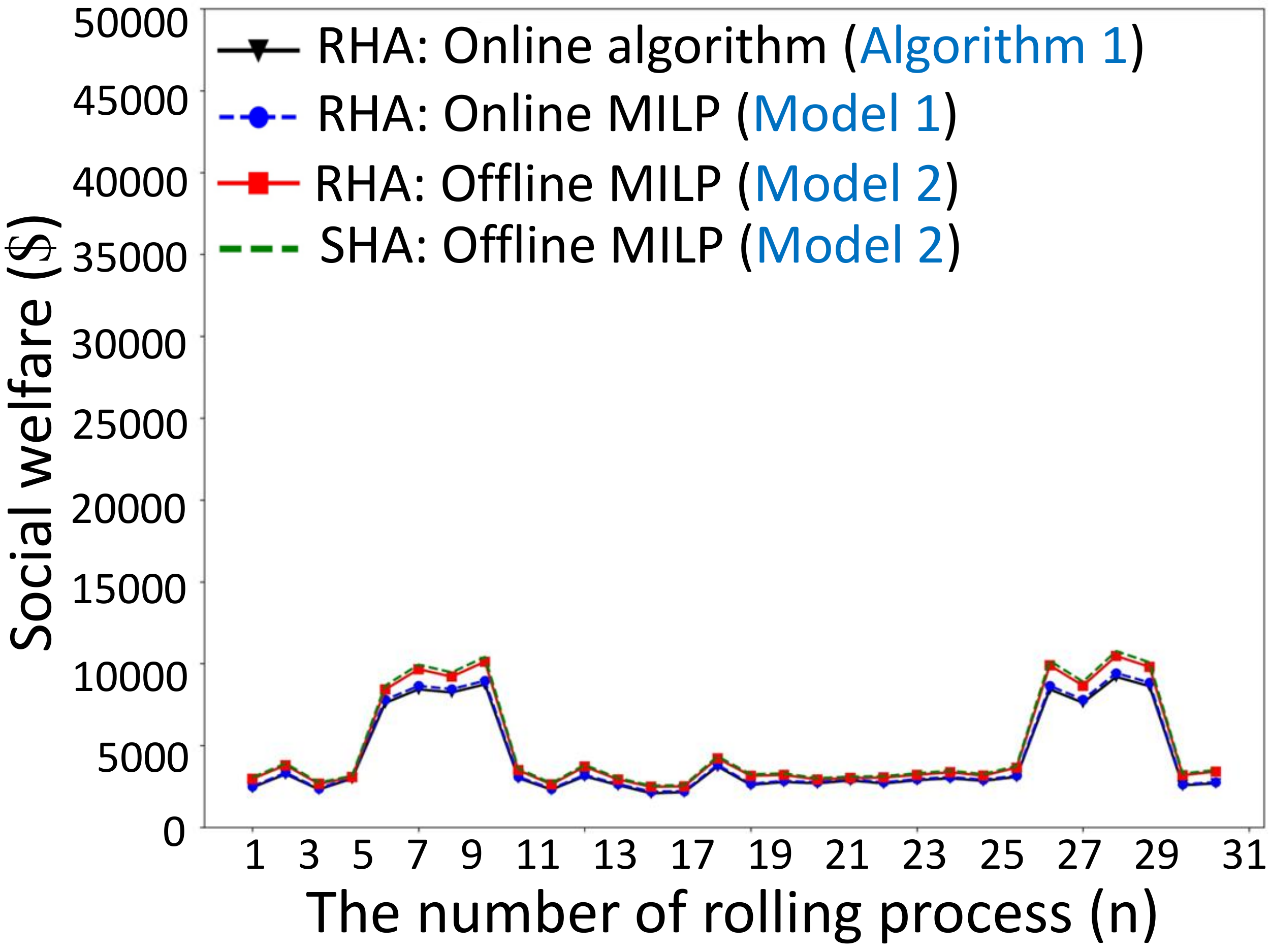}\label{Nt2}}
\subfloat[$\Delta t=50$]{\includegraphics[width=0.3\textwidth]{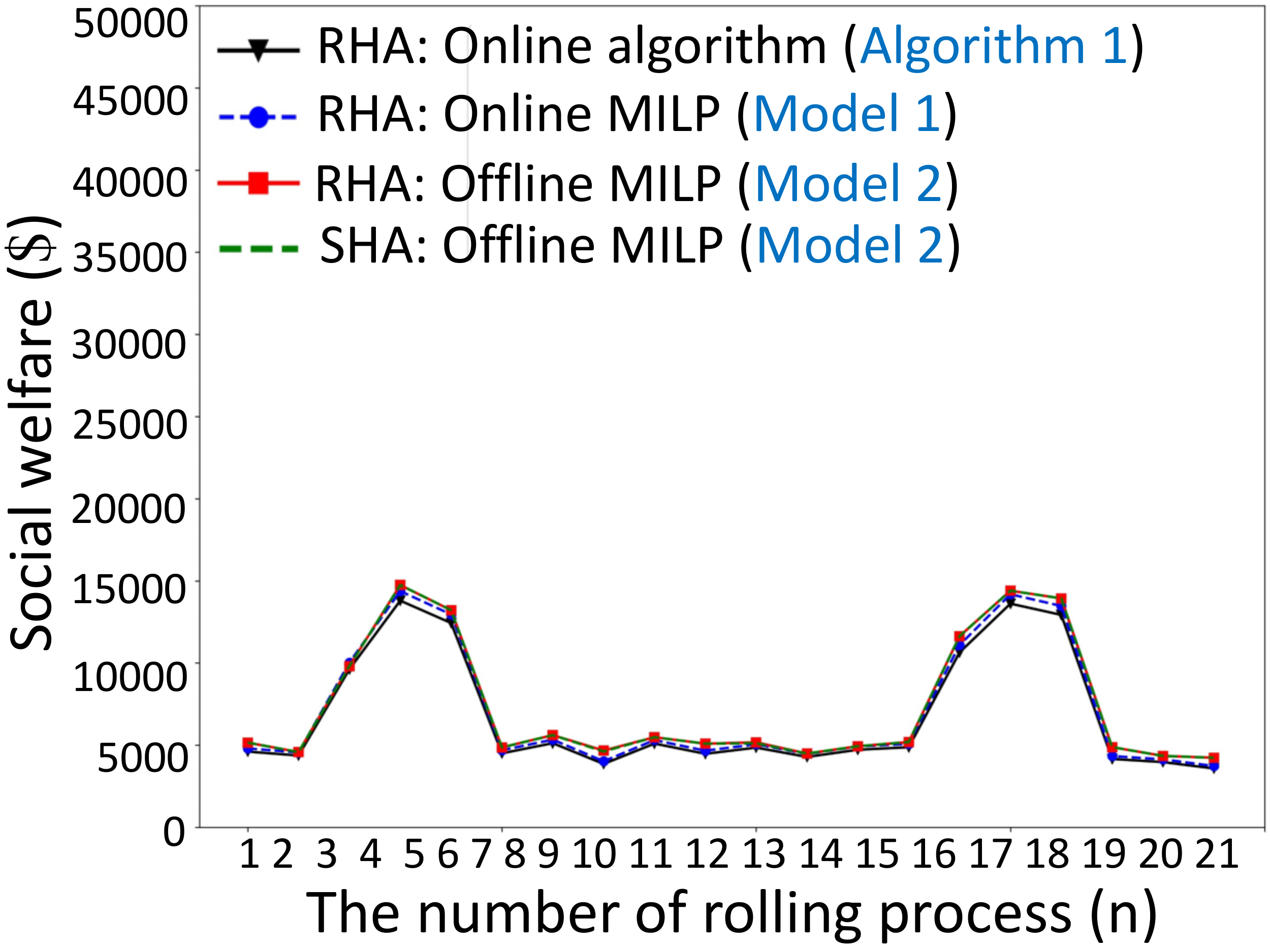}\label{Nt3}}\\
\subfloat[$\Delta t=70$]{\includegraphics[width=0.3\textwidth]{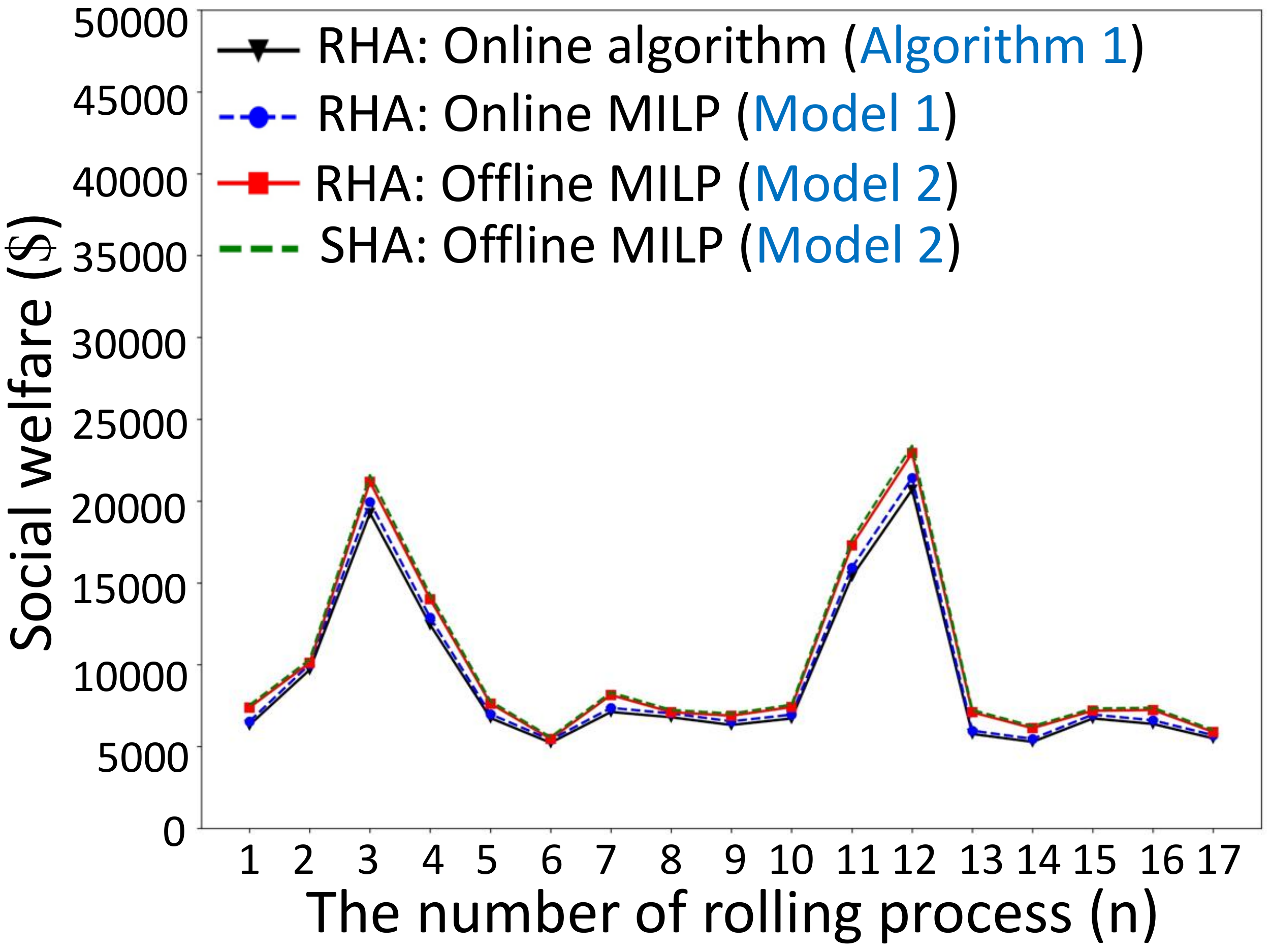}\label{Nt4}}
\subfloat[$\Delta t=90$]{\includegraphics[width=0.3\textwidth]{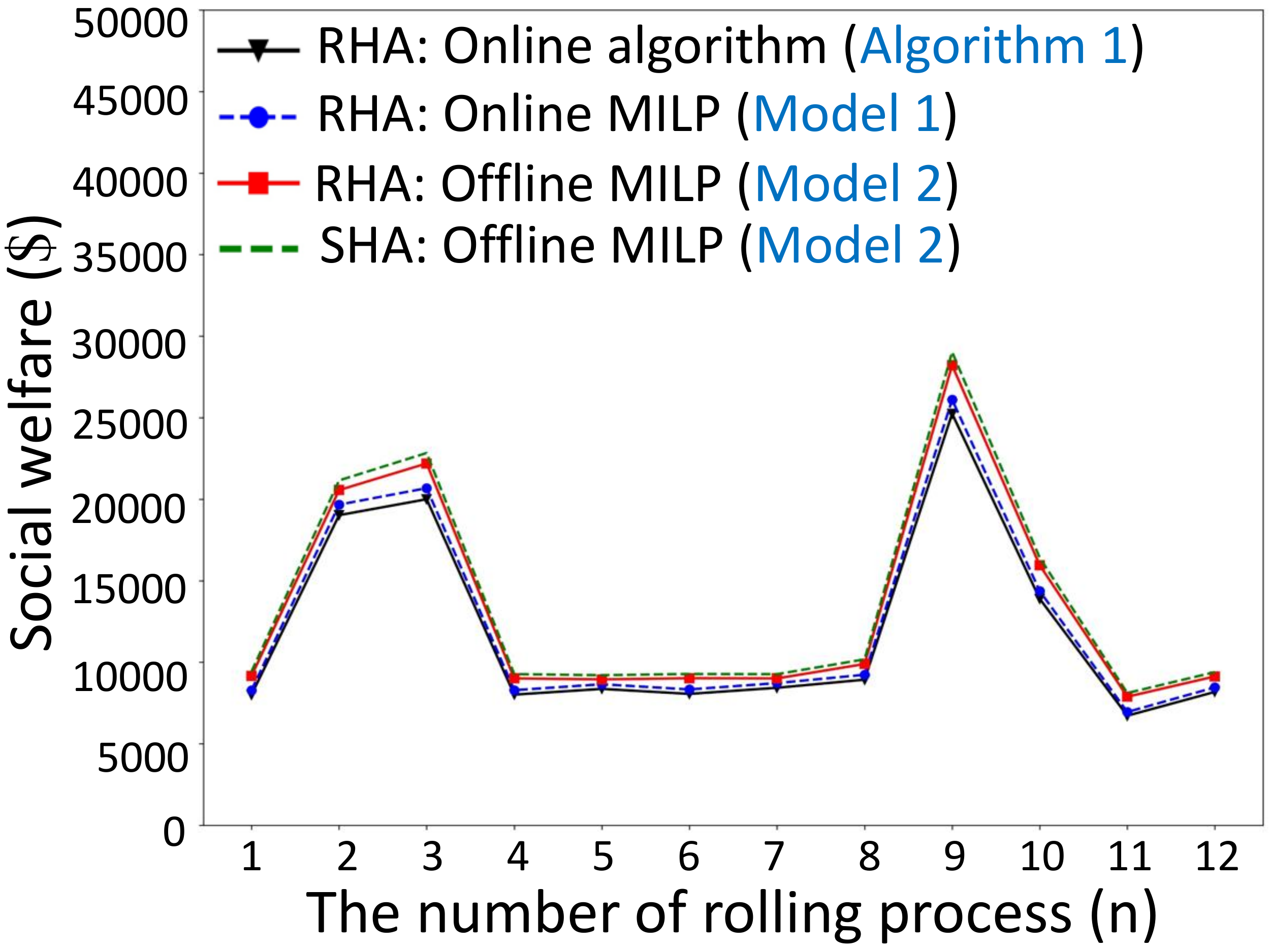}\label{Nt5}}
\subfloat[$\Delta t=110$]{\includegraphics[width=0.3\textwidth]{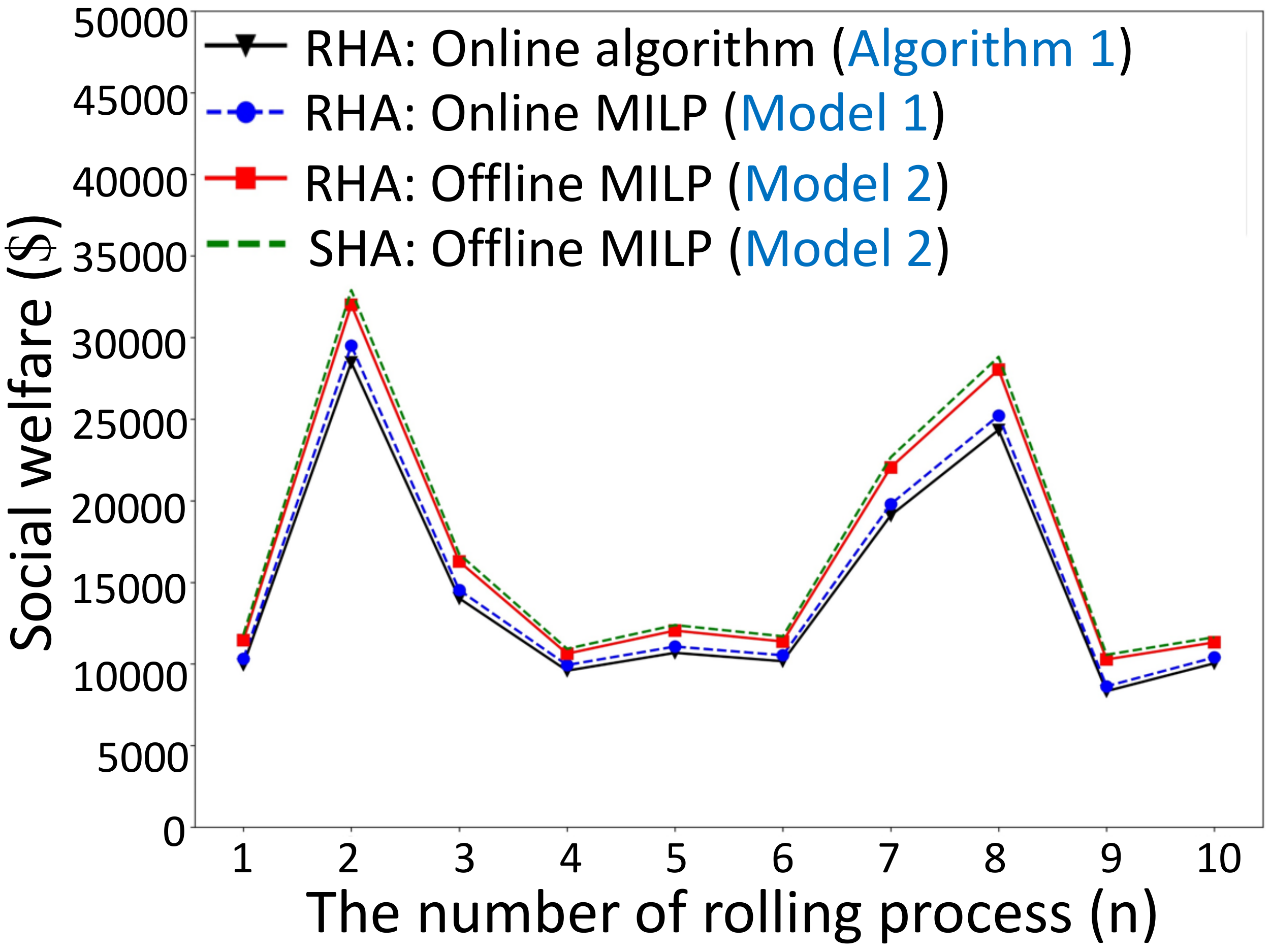}\label{Nt6}}\\
\end{figure*}
\addtocounter{figure}{-1}     
\begin{figure*}[ht!]
\addtocounter{figure}{1}
\addtocounter{subfigure}{6}
\subfloat[$\Delta t=130$]{\includegraphics[width=0.3\textwidth]{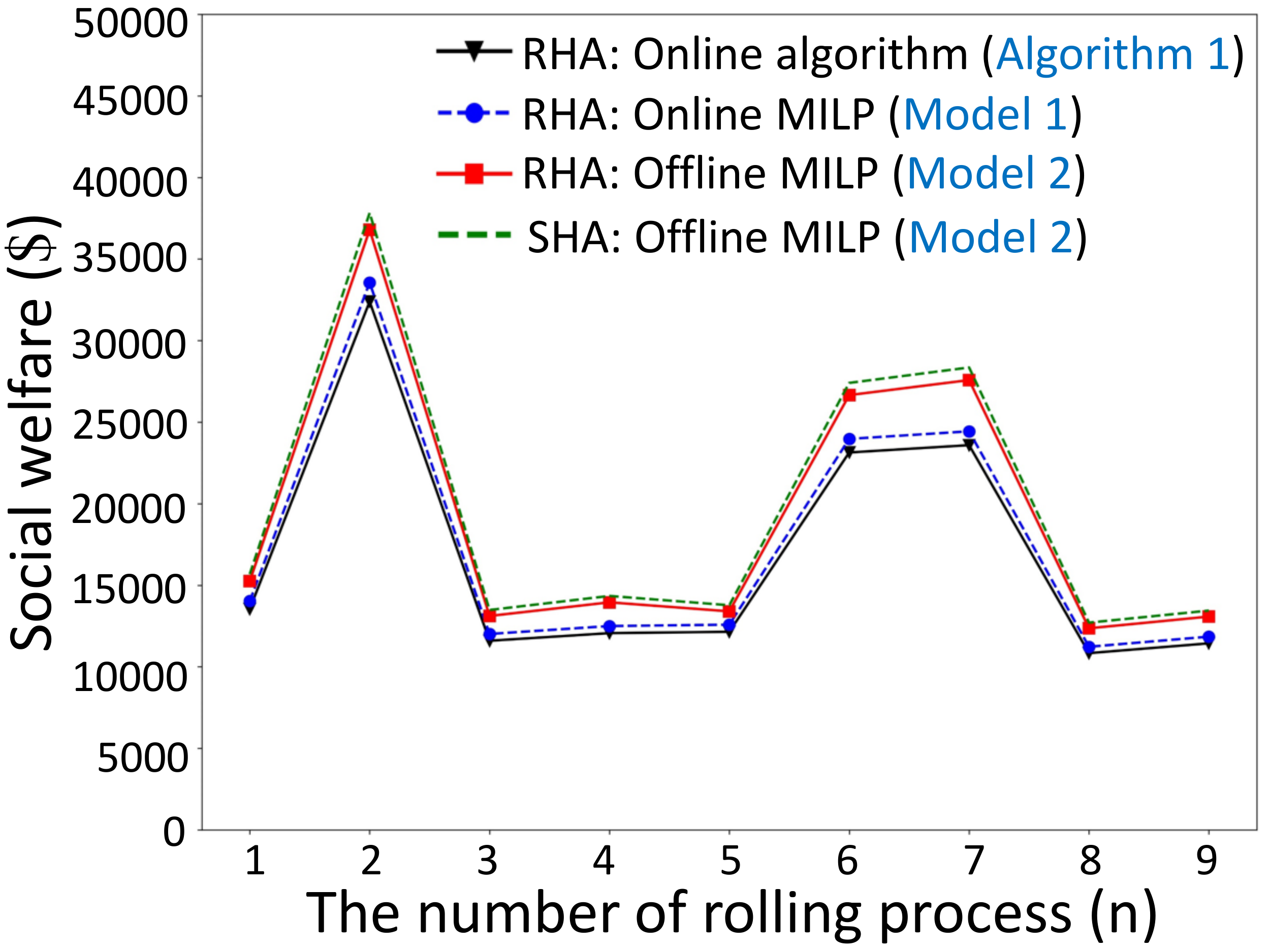}\label{Nt7}}
\subfloat[$\Delta t=170$]{\includegraphics[width=0.3\textwidth]{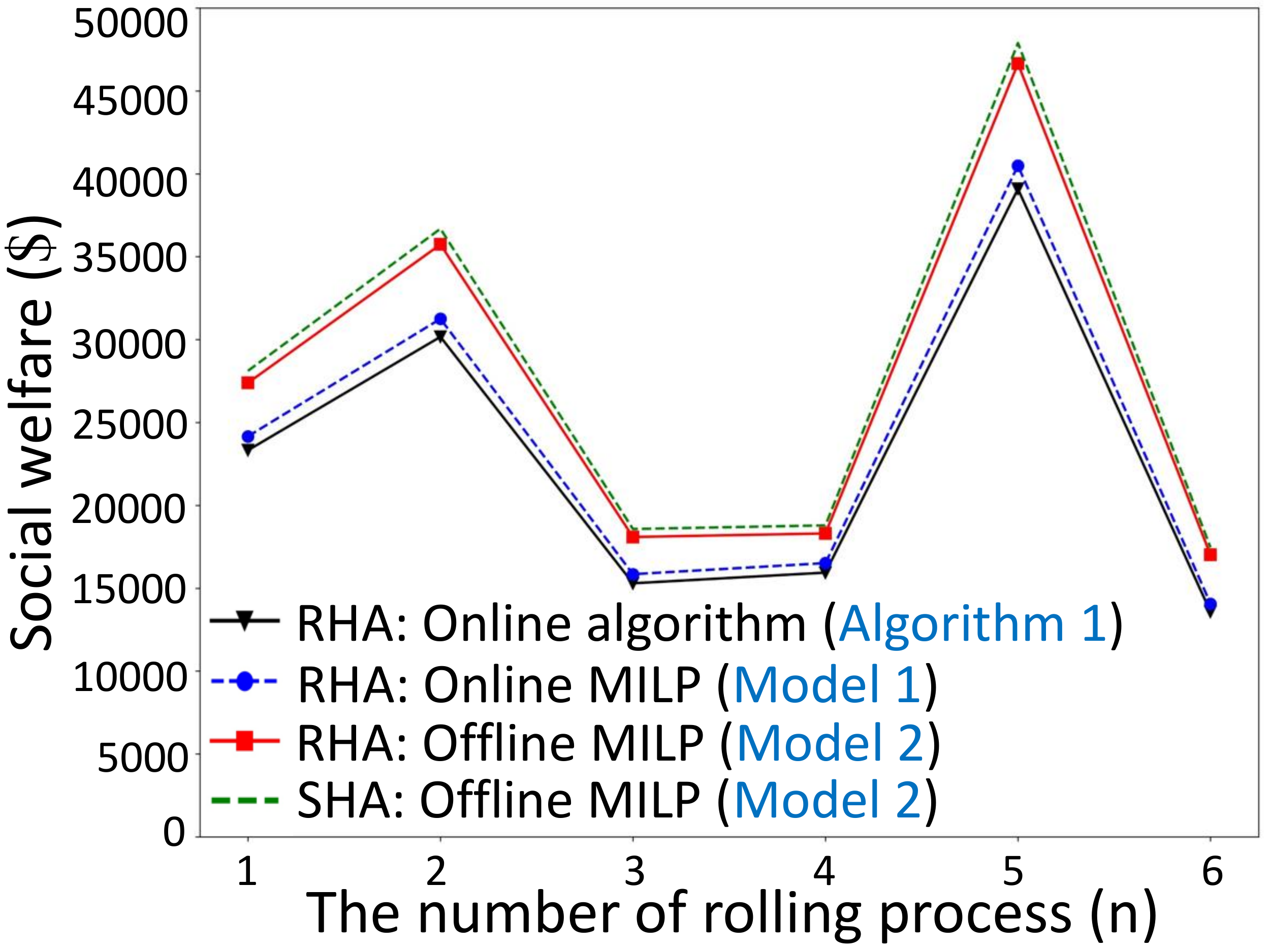}\label{Nt8}}
\subfloat[$\Delta t=200$]{\includegraphics[width=0.3\textwidth]{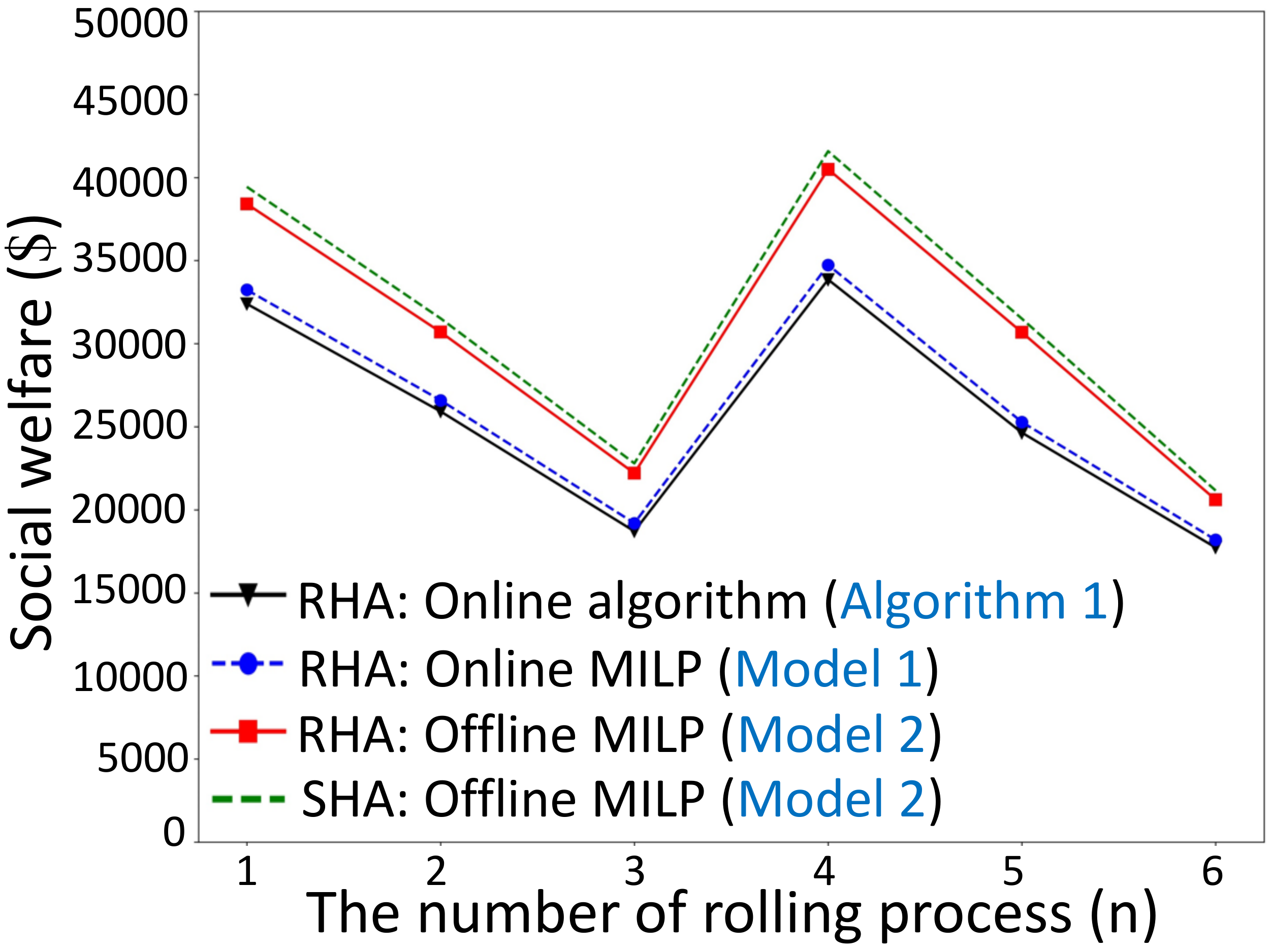}\label{Nt9}}\\
\caption{Sensitivity analysis on the time step ($\Delta t$) in terms of social welfare in the PAYG mechanism.}
\label{H22}
\end{figure*}

\newpage
\bibliographystyle{elsarticle-harv}
\bibliography{bibliography.bib}
\end{document}